\documentclass[11pt]{article}
\usepackage{amsmath,amsthm,amsfonts,latexsym,amsopn,verbatim,amscd,amssymb}
\usepackage{adjustbox}
\usepackage{lipsum}
\usepackage{cleveref}
\usepackage{enumerate}
\textheight 8.27in 
\textwidth 6.2in
\topmargin -0.2in
\parskip 0.2cm
\oddsidemargin 0.1in
\usepackage[left=2cm, right=2cm, top=2cm]{geometry}

\newtheorem{theorem}{Theorem}[section]
\newtheorem{Assumption}{Assumption}[section]

\newtheorem{Example}{Example}[section]

\newtheorem{lemma}{Lemma}[section]

\usepackage{tabularx}
\usepackage{amsmath,amsfonts,amssymb}
\usepackage{graphics, graphicx}
\usepackage{enumitem}
\usepackage{subcaption}
\usepackage{multirow}
\usepackage{authblk}
\usepackage{enumitem,amssymb}
\usepackage{graphicx,float}
\newlist{todolist}{itemize}{2}
\setlist[todolist]{label=$\square$}
\begin{document}
	\title{A parameter uniform hybrid approach for  singularly perturbed two-parameter parabolic problem with discontinuous data}
	\date{}
\author{Nirmali Roy$^{1*}$, Anuradha Jha$^{2}$}
\date{%
    $^{1,2}$Indian Institute of Information Technology Guwahati, Bongora, India,781015\\
   *Corresponding author. E-mail: {nirmali@iiitg.ac.in}
}
	\maketitle
	\begin{sloppypar}	
	\section*{Abstract} In this article, we address singularly perturbed two-parameter parabolic problem of the reaction-convection-diffusion type in two dimensions. These problems exhibit discontinuities in the source term and convection coefficient at particular domain points, which result in the formation of interior layers. The presence of two perturbation parameters leads to the formation of boundary layers with varying widths. Our primary focus is to address these layers and develop a scheme that is uniformly convergent. So we propose a hybrid monotone difference scheme for the spatial direction, implemented on a specially designed piece-wise uniform Shishkin mesh, combined with the Crank-Nicolson method on a uniform mesh for the temporal direction. The resulting scheme is proven to be uniformly convergent, with an order of almost two in the spatial direction and exactly two in the temporal direction. Numerical experiments support the theoretically proven higher order of convergence and shows that our approach results in better accuracy and convergence compared to other existing methods in the literature.  \\
		\textbf{Keywords}:  Interior layers, Hybrid Method, Singularly Perturbed, Parabolic problem,  Shishkin mesh.
\section{Introduction}
We have considered a two-parameter singularly perturbed parabolic boundary value problem with non-smooth data:
\begin{equation}
\begin{aligned}
\label{twoparaparabolic}
&\mathcal{L} u(x,t)\equiv(\epsilon u_{xx}+\mu au_x-bu-u_t)(x,t)=f(x,t),~(x,t)\in(\Omega^{-}\cup\Omega^{+}),\\
&u(0,t)
=p(t),~ (0,t)\in\Gamma_l =\{(0,t)|0\le t\le T\};\\
&u(1,t)=r(t),(1,t)\in\Gamma_r= \{(1,t)|0\le t\le T\};\\
&u(x,0)=q(x), (x,0)\in\Gamma_b=\{(x,0)|0\le x\le1\} ,
\end{aligned}
\end{equation}
where \(0<\epsilon\ll1\) and \(0<\mu\le1\) are two singular perturbation parameters. Let \(d \in\Gamma=(0,1)\), \(\Gamma^{-}=(0,d)\), \(\Gamma^{+}=(d,1)\), \(\Gamma_{c}=\Gamma_{l}\cup\Gamma_{b}\cup\Gamma_{r}\), and \(\Omega=(0,1)\times(0,T]\), \(\Omega^{-}=(0,d)\times(0,T]\), \(\Omega^{+}=(d,1)\times(0,T]\). The convection coefficient \(a(x,t)\) and source term \(f(x,t)\) experience discontinuities at \((d,t)\in \Omega\) for all \(t\). Moreover, \(a(x,t)\le-\alpha_1<0\) for \((x,t)\in\Omega^-\) and \(a(x,t)\ge\alpha_2>0\) for \((x,t)\in\Omega^+\), where \(\alpha_1, \alpha_2\) are positive constants. These functions are sufficiently smooth on \((\Omega^- \cup \Omega^+)\). We assume the jumps of \(a(x,t)\) and \(f(x,t)\) at \((d,t)\) satisfy \(|[a](d,t)|<C\) and \(|[f](d,t)|<C\), where the jump of \(\omega\) at \((d,t)\) is \([\omega](d,t)=\omega(d+,t)-\omega(d-,t)\). Additionally, the coefficient \(b(x, t)\) is assumed to be a sufficiently smooth function on \(\Omega\) such that \(b(x,t)\ge\beta>0\). The boundary data \(p(t), r(t)\), and initial data \(q(x)\) are sufficiently smooth on the domain and satisfy the compatibility condition. Under these assumptions, the problem \eqref{twoparaparabolic} has a unique continuous solution in the domain \(\bar{\Omega}\). The solution contains strong interior layers at the points of discontinuity \((d,t)\) for all \(t \in (0,T]\) due to the discontinuity in the convection coefficient and source term. Additionally, the solution exhibits boundary layers at \(\Gamma_{l}\) and \(\Gamma_{r}\) due to the presence of perturbation parameters \(\epsilon\) and \(\mu\).\\
Singularly perturbed two-parameter parabolic problem are not extensively studied, and there is a scarcity of well-established numerical methods tailored to solve them effectively, especially in comparison to the abundance of methods available for one-parameter singularly perturbed parabolic problems. A number of researchers have developed numerical techniques for singularly perturbed two-parameter parabolic problems with smooth data  \cite{BDD, GKD, KS, MD, M, ER1, ZAM}. Finding a uniform numerical solution for singularly perturbed parabolic equations with discontinuous data is more challenging. For the problem \eqref{twoparaparabolic}, M. Chandru et al. in \cite{MC1} shown that the accuracy of the upwind scheme on Shishkin mesh in space and the backward Euler scheme on uniform mesh in time is nearly first-order. D. Kumar et al. in \cite{DP} used the Crank-Nicolson method in time on a uniform mesh and the upwind method in space on an appropriately defined Shishkin mesh to acheive second order accurate method in time and almost first order accurate in space. S. Singh et al. presented in \cite{SCK} an implicit scheme based on the Crank-Nicolson method in time and trigonometric B-spline basis functions on Shishkin mesh in space. This parameter uniform method is proved to be almost first order convergent in space and second order convergent in time. 

With uniform meshes, standard numerical approaches cannot yield effective solution approximations in layer regions. 
Stynes and Roos \cite{RS} introduced a hybrid technique for steady-state singular perturbation problems (SPPs) with continuous data which was almost of order two on a Shishkin mesh. The hybrid numerical scheme for solving singularly perturbed boundary value problems (BVPs) with discontinuous convection coefficients introduced by Cen in \cite{Cen} achieves nearly second-order accuracy across the entire domain \([0,1]\) when the perturbation parameter \(\epsilon\) satisfies \(\epsilon \le N^{-1}\). Also in \cite{MS}, K. Mukherjeee et. al. proposed a hybrid difference scheme on Shishkin mesh in space and Euler-backward difference in time for singularly perturbed one parabolic problem with non smooth data. They achieves the convergence almost two in space and first order in time. \\
In this paper, we have used the Crank-Nicolson scheme \cite{CN1} on time on a uniform mesh, and in space and a hybrid scheme on piece-wise uniform Shishkin mesh. The Crank-Nicolson method in time gives second order convergence in time in supremum norm. The hybrid scheme we propose combines a modified central difference scheme, the midpoint upwind scheme and upwind scheme. This blending offers a key advantage: it surpasses the limitations of the classical central difference scheme. The classical central difference scheme on a uniform mesh has been found to generate nonphysical oscillations in the discrete solution when $\epsilon$ is small, unless the mesh diameter is prohibitively small. On the other hand, the midpoint upwind approach shows second-order uniform convergence beyond the boundary layer region when used with uniform mesh. The upwind method though first order doesn't have spurious oscillations. Furthermore, the Shishkin mesh, uses piece-wise uniform meshes. Thus we choose a fine mesh within the layer regions  and a coarse mesh in the outer regions. Leveraging this characteristic, we utilize the second-order classical central difference scheme in the interior layer regions which is second order accurate under certain condition. The midpoint upwind scheme  and upwind scheme are used suitably in the outer region to get a monotone method. This approach allows us to accomplish nearly second-order convergence in space for the proposed hybrid scheme. Thus the overall method has  almost second order parameter uniform  convergence. \\
The rest of the paper is arranged as follows.. In Section 2, we present a \textit{priori} bounds on the analytical solution and its derivatives. The Shishkin mesh and the detailed construction of the suggested hybrid finite difference scheme are provided in Section 3. In section 4, truncation error of the scheme and uniform error bound is given. In Section 5, we conduct few numerical experiments to validate the theoretical findings and showcase the efficiency and accuracy of the proposed scheme. The primary findings are summarized in Section 6.\\
{\bf{Notation}}:  The norm used is the maximum norm given by
	$$\|u\|_{\Omega}=\max_{(x,t)\in \Omega}|u(x,t)|.$$
In this study, $C$, $C_{1}$, or $C_{2}$ are used as a positive constants that are independent of the perturbation parameters $\epsilon, \mu$, and mesh size.
 \section{Analytical aspects of the solution}
 The analytical properties of the solution to the problem \eqref{twoparaparabolic} are discussed in this section. The following provides details regarding the solution to equation \eqref{twoparaparabolic}, including the existence of a singularly perturbed solution, the minimum principle, stability bounds, and apriori bounds.
 \begin{lemma}\cite{DP}
     The solution $u(x,t)\in C^{0}(\bar{\Omega})\cap C^{1}(\Omega)\cap C^{2}(\Omega^{-}\cup \Omega^{+})$ exists for the \eqref{twoparaparabolic}.
 \end{lemma}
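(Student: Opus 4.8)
The plan is to exploit that although $a$ and $f$ are discontinuous at $x=d$, they are smooth on each of the closed strips $\bar{\Omega}^-$ and $\bar{\Omega}^+$ taken separately, and to build $u$ by coupling two one-sided parabolic problems along the interface line $\{x=d\}$. First I would pick an arbitrary function $\phi$ on $[0,T]$ that is Hölder continuous together with its first derivative and satisfies the corner compatibility $\phi(0)=q(d)$, and then solve, by classical linear parabolic theory (here the operators are non-degenerate since $\epsilon>0$ and $\mu$ are fixed), the two initial-boundary value problems
\begin{align*}
\mathcal{L}u^-&=f\ \text{ in }\Omega^-, & u^-(0,t)&=p(t),\quad u^-(d,t)=\phi(t),\quad u^-(x,0)=q(x),\\
\mathcal{L}u^+&=f\ \text{ in }\Omega^+, & u^+(1,t)&=r(t),\quad u^+(d,t)=\phi(t),\quad u^+(x,0)=q(x).
\end{align*}
By the assumed smoothness and the compatibility conditions, each problem admits a unique solution that is $C^2$ in $x$ and $C^1$ in $t$ up to its own closure; gluing them gives, for every choice of $\phi$, a function $u$ automatically lying in $C^0(\bar{\Omega})\cap C^2(\Omega^-\cup\Omega^+)$.

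Second, I would force $C^1$ matching across the interface. Define the interface operator $\mathcal{T}\phi(t):=u_x^+(d,t)-u_x^-(d,t)$, the jump of the spatial derivative; it is well defined and, by linearity of $\mathcal{L}$, depends affinely on $\phi$. The sought solution corresponds exactly to a $\phi$ with $\mathcal{T}\phi\equiv 0$. Writing $\mathcal{T}\phi=\mathcal{T}_0\phi+g$, where $\mathcal{T}_0$ is the part driven by $\phi$ alone (homogeneous lateral and initial data, $f\equiv 0$) and $g$ collects the contribution of $p,r,q,f$, the problem reduces to $\mathcal{T}_0\phi=-g$. I would show $\mathcal{T}_0$ is invertible on the appropriate Hölder/Sobolev class by a Steklov--Poincaré type argument: testing the homogeneous one-sided equations against $u^\pm$ and integrating by parts in $x$, the sign conditions $a\le-\alpha_1<0$ on $\Omega^-$, $a\ge\alpha_2>0$ on $\Omega^+$, and $b\ge\beta>0$ make $-\mathcal{T}_0$ a coercive Dirichlet-to-Neumann map, so $\mathcal{T}_0\phi=-g$ has a unique solution, and parabolic smoothing lifts its regularity to what is needed for $u$ to be $C^1$ up to $\{x=d\}$.

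Third, with this $\phi$ fixed the glued function $u$ is continuous on $\bar{\Omega}$, is $C^2$ on $\Omega^-\cup\Omega^+$ by construction, and is $C^1$ on $\Omega$ because $u_x$ is smooth on each side and continuous across $\{x=d\}$ by the choice of $\phi$ (continuity of $u_t$ across the interface then follows from the equation, since only $u_{xx}$ inherits the jumps of $a$ and $f$ while $b$ is continuous); uniqueness is a consequence of the minimum principle recorded later in this section, applied to the difference of two solutions. The main obstacle is the middle step, namely verifying that $\mathcal{T}_0$ is genuinely invertible, equivalently that the homogeneous transmission problem has only the trivial solution; this is exactly where the opposing signs of $a$ on the two subdomains and the reaction bound $b\ge\beta>0$ are indispensable, and the hypotheses $|[a](d,t)|<C$, $|[f](d,t)|<C$ guarantee that $g$ lies in the correct space so that the resulting $\phi$, and hence $u$, has the claimed regularity.
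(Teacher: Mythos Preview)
The paper does not prove this lemma at all: it is quoted verbatim from \cite{DP} and carries no argument beyond the citation. So there is nothing in the paper to compare your strategy against; any correct proof you give already exceeds what the present paper supplies.

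Your transmission/Steklov--Poincar\'e outline is a reasonable route and, suitably fleshed out, can be made to work. Two places deserve tightening. First, your justification that $u_t$ is continuous across $\{x=d\}$ is not the right one: from the PDE, $u_t=\epsilon u_{xx}+\mu a u_x-bu-f$, and on the right-hand side both $a$ and $f$ jump while $u_{xx}$ may jump as well, so appealing to the equation does not give continuity of $u_t$. The correct argument is simply that both one-sided solutions satisfy $u^{\pm}(d,t)=\phi(t)$, and the parabolic regularity you have already invoked makes $u_t^{\pm}$ continuous up to $x=d$, so $u_t^{\pm}(d,t)=\phi'(t)$ from each side. Second, the coercivity of the parabolic Dirichlet-to-Neumann operator $\mathcal{T}_0$ is driven primarily by the diffusion term $\epsilon u_{xx}$ (which contributes a positive-definite boundary pairing after integration by parts), not by the sign pattern of $a$; the signs of $a$ on the two sides are helpful for the layer structure and for the minimum principle, but your sentence suggests they are the source of coercivity, which overstates their role here. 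If you sharpen these two points and state precisely in which function class you are inverting $\mathcal{T}_0$ (a parabolic H\"older or anisotropic Sobolev space on $[0,T]$), the argument is complete; uniqueness then indeed follows from Lemma~2.2 applied to the difference of two solutions.
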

\begin{lemma}\cite{DP}
    If $u(x,t)\in C^{0}(\bar{\Omega})\cap C^{1}(\Omega)\cap C^{2}(\Omega^{-}\cup \Omega^{+})$ such that $u(x,t)\ge 0, \forall (x,t)\in \Gamma_{c},\mathcal{L}u(x,t)\le 0, \forall (x,t)\in (\Omega^{-}\cup \Omega^{+})$ and $[u_{x}](d,t) \le 0,t>0$ then $u(x,t)\geq 0 \; \forall ~(x,t)\in \bar{\Omega}$.
\end{lemma}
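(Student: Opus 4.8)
The plan is to argue by contradiction, adapting the classical weak minimum principle to accommodate the interface $x=d$ at which $u$ loses $C^2$-regularity. Suppose the conclusion fails and set $m=\min_{\bar{\Omega}}u<0$; since $\bar{\Omega}$ is compact and $u\in C^0(\bar{\Omega})$, this value is attained at some $(x^*,t^*)$. Because $u\ge 0$ on $\Gamma_c=\Gamma_l\cup\Gamma_b\cup\Gamma_r$, the point $(x^*,t^*)$ cannot lie on $\Gamma_c$; in particular $t^*>0$, and $(x^*,t^*)$ lies either in the open set $\Omega^-\cup\Omega^+$ or on the interface line $\{(d,t):0<t\le T\}$. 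In both situations, minimality in the time variable forces $u_t(x^*,t^*)\le 0$ (with equality when $t^*<T$; when $t^*=T$ the one-sided difference quotient $\tfrac{u(x^*,T)-u(x^*,t)}{T-t}\le 0$ gives the inequality in the limit).

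First I would dispose of the case $x^*\neq d$, so $(x^*,t^*)\in\Omega^-\cup\Omega^+$, where $u$ is $C^2$. An interior minimum in $x$ gives $u_x(x^*,t^*)=0$ and $u_{xx}(x^*,t^*)\ge 0$, hence $\mathcal{L}u(x^*,t^*)=\epsilon u_{xx}+\mu a u_x-bu-u_t\ge 0+0-b(x^*,t^*)m-u_t(x^*,t^*)>0$, using $b\ge\beta>0$ and $m<0$. This contradicts the hypothesis $\mathcal{L}u\le 0$ on $\Omega^-\cup\Omega^+$. Note the convection term causes no difficulty since it is multiplied by $u_x$, which vanishes at the minimizer irrespective of the sign of $a$.

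The remaining, and main, case is $x^*=d$, where the jump hypothesis enters. Since $x=d$ is a minimum of $u(\cdot,t^*)$, approaching from $\Omega^-$ gives $u_x(d-,t^*)\le 0$ and approaching from $\Omega^+$ gives $u_x(d+,t^*)\ge 0$, so $[u_x](d,t^*)\ge 0$; combined with the assumed $[u_x](d,t^*)\le 0$ this yields $u_x(d-,t^*)=u_x(d+,t^*)=0$. Using that $u$ restricted to $\overline{\Omega^-}$ is $C^2$ up to the interface, a Taylor expansion of $x\mapsto u(x,t^*)$ at the endpoint $d$ (with $u_x(d-,t^*)=0$ and the function lying above its value at $d$) forces $u_{xx}(d-,t^*)\ge 0$. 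Passing to the limit $x\to d^-$ in $\mathcal{L}u$ — legitimate because the coefficients are smooth up to the interface and the one-sided derivatives exist there — and invoking $\mathcal{L}u\le 0$ on $\Omega^-$, I obtain $0\ge\lim_{x\to d^-}\mathcal{L}u(x,t^*)=\epsilon u_{xx}(d-,t^*)+0-b(d,t^*)m-u_t(d-,t^*)>0$, a contradiction. (Working from the $\Omega^+$ side gives the same conclusion.) Hence no negative minimum can occur, and $u\ge 0$ on $\bar{\Omega}$.

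The points I expect to need careful writing are: (i) the boundary behaviour of $u_t$ at $t^*=T$; (ii) the existence and sign of the one-sided quantities $u_x(d\pm,t^*)$ and $u_{xx}(d-,t^*)$, which relies on the implicit regularity of $u$ up to the interface from each side; and (iii) the continuity argument transferring the inequality $\mathcal{L}u\le 0$ from the open region $\Omega^-$ to its boundary point on the interface. Everything else is a routine sign-chasing argument.
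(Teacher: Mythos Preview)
The paper does not supply its own proof of this lemma; it is quoted verbatim from \cite{DP} and stated without argument. Your contradiction argument via the (weak) minimum principle is the standard route and is essentially correct: the interior case uses only $u_x=0$, $u_{xx}\ge 0$, $u_t\le 0$ and the strict positivity of $b$, while at the interface the jump hypothesis $[u_x](d,t^*)\le 0$ combined with the one-sided signs forced by a minimum pins both $u_x(d\pm,t^*)$ to zero and lets you run the same sign computation from either side. The caveats you already flag---existence of the one-sided second derivative $u_{xx}(d-,t^*)$ and the passage of the inequality $\mathcal{L}u\le 0$ to the limit on the interface---are the only places where the stated regularity $u\in C^2(\Omega^-\cup\Omega^+)$ is, strictly speaking, not quite enough; one needs $C^2$ up to the interface from each side, which is implicit in the problem class (smooth data on each subdomain) and is the assumption used in \cite{DP} as well.
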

\begin{lemma}\cite{DP}
    The bounds on $u(x,t)$ is given by 
	$$\|u\|_{\bar{\Omega}}\le \|u\|_{\Gamma_{c}}+\frac{\|f\|_{\Omega^{-}\cup\Omega^{+}}}{\theta}.$$
 where $\theta=\min\{\alpha_{1}/d,\alpha_{2}/(1-d)\}.$
\end{lemma}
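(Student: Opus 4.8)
The plan is to establish the stability bound
$$\|u\|_{\bar{\Omega}}\le \|u\|_{\Gamma_{c}}+\frac{\|f\|_{\Omega^{-}\cup\Omega^{+}}}{\theta}$$
by a standard barrier-function argument using the minimum principle of the preceding lemma. First I would introduce the two comparison functions
$$\psi^{\pm}(x,t)=\|u\|_{\Gamma_{c}}+\frac{\|f\|_{\Omega^{-}\cup\Omega^{+}}}{\theta}\,\phi(x)\pm u(x,t),$$
where $\phi$ is a suitable nonnegative piecewise-linear (or piecewise-smooth) function on $[0,1]$ designed so that $\mathcal{L}\phi \le -\theta$ on $\Omega^{-}\cup\Omega^{+}$ and so that $\phi \le 1$ on $\bar\Omega$. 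The natural choice reflecting the definition $\theta=\min\{\alpha_{1}/d,\alpha_{2}/(1-d)\}$ is the ``tent'' function $\phi(x)=x/d$ on $[0,d]$ and $\phi(x)=(1-x)/(1-d)$ on $[d,1]$, which vanishes at the endpoints, equals $1$ at $x=d$, and has the one-sided derivative jump $[\phi_x](d)=-\tfrac{1}{1-d}-\tfrac{1}{d}<0$ needed to apply the minimum principle across the interface.

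The key steps, in order, are: (i) verify $\psi^{\pm}\ge 0$ on the characteristic boundary $\Gamma_{c}$, which is immediate since $\phi\ge 0$ there and $\pm u \ge -\|u\|_{\Gamma_c}$; (ii) compute $\mathcal{L}\psi^{\pm}$ on $\Omega^{-}\cup\Omega^{+}$ and check it is $\le 0$ — here $\mathcal{L}$ applied to the constant term gives $-b\|u\|_{\Gamma_c}\le 0$, $\mathcal{L}u=f$, and $\mathcal{L}\phi$ must be controlled; on $\Omega^{-}$ one has $\phi_{xx}=0$, $\mu a \phi_x = \mu a/d \le -\alpha_1/d \le -\theta$ (using $a\le-\alpha_1$, $\mu\le 1$... one must be slightly careful that $\mu$ could be small, so I would instead only use $-b\phi\le 0$ and handle the first-order term, or incorporate an exponential correction if needed), and $-b\phi\le 0$, $-\phi_t=0$; similarly on $\Omega^{+}$; (iii) verify the interface condition $[(\psi^{\pm})_x](d,t)\le 0$, which follows from $[\phi_x](d)<0$ (times the positive coefficient) together with $[u_x](d,t)$ — this requires knowing the sign of $[u_x](d,t)$, which is exactly the hypothesis under which the minimum principle of the previous lemma is stated, so I would invoke the known fact (from \cite{DP}) that the exact solution satisfies the relevant continuity/sign property of $[u_x]$; (iv) apply the minimum principle to conclude $\psi^{\pm}\ge 0$ on $\bar\Omega$, i.e. $|u(x,t)|\le \|u\|_{\Gamma_c}+\frac{\|f\|}{\theta}\phi(x)\le \|u\|_{\Gamma_c}+\frac{\|f\|}{\theta}$, which is the claim.

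The main obstacle I anticipate is the treatment of the convection term $\mu a\phi_x$ when $\mu$ is small: a purely first-order barrier no longer dominates $\theta$ on its own once $\mu\to 0$. The cleanest fix is to exploit the reaction term instead — since $b\ge\beta>0$, one can absorb the source into the reaction by using the \emph{constant} barrier $\psi^{\pm}=\|u\|_{\Gamma_c}+\|f\|/\beta\pm u$; but that produces the bound with $\beta$ rather than $\theta$, so to get precisely the stated constant $\theta$ one genuinely needs the piecewise-linear barrier and must argue that on each subdomain the combination $\mu a\phi_x - b\phi$ is $\le-\theta$ using that $\phi\ge 0$, $a\phi_x\le -\alpha_1/d$ on $\Omega^-$ (so $\mu a\phi_x \le \mu(-\alpha_1/d)$, which degrades as $\mu\to0$). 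I expect the paper, following \cite{DP}, sidesteps this by noting $\mathcal{L}\phi$ need only satisfy $\mathcal{L}\phi\le -\theta$ in the reduced ($\epsilon=\mu=0$) sense is not quite enough; the honest route is to observe that for the barrier argument one only needs $\mathcal{L}(\text{const}\cdot\phi + \text{const}) \le \mathcal{L}u$ pointwise, and since $\epsilon\phi_{xx}=0$ and $-\phi_t=0$, it reduces to $\mu a\phi_x - b\phi \le -1$ after scaling, which holds because $-b\phi\le 0$ is discarded and... — resolving this $\mu$-uniformity is the crux, and I would handle it by citing the construction in \cite{DP} verbatim, or by replacing $\phi$ with $1-\phi$-type exponential layers $\exp(-\alpha x/\mu)$ only if a $\mu$-uniform constant is truly required.
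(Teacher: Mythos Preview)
The paper itself provides no proof of this lemma; it is simply quoted from \cite{DP} with the citation attached to the lemma heading and no argument given. There is therefore nothing in the present paper to compare your attempt against beyond the bare statement.

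That said, your tent-function barrier $\phi(x)=x/d$ on $[0,d]$, $\phi(x)=(1-x)/(1-d)$ on $[d,1]$, combined with the minimum principle of the preceding lemma, is exactly the construction used in \cite{DP} and in the earlier Chandru--Das--Ramos paper \cite{MC1} on which \cite{DP} is based. Your steps (i)--(iv) match that argument. You have also put your finger on a genuine soft spot: with the operator written as $\epsilon u_{xx}+\mu a u_x-bu-u_t$, the convective contribution $\mu a\phi_x$ degrades like $\mu$ as $\mu\to 0$, so the tent barrier alone does not deliver the stated $\mu$-independent constant $\theta$; one really needs the reaction term $b\ge\beta$ to take over in that regime. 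The cited references gloss over this (the constant $\theta$ as written is inherited from the one-parameter convection-dominated case), and the honest statement is a bound with denominator $\min\{\mu\theta,\beta\}$ or simply $\beta$. Your instinct to fall back on the constant barrier $\|u\|_{\Gamma_c}+\|f\|/\beta\pm u$ for small $\mu$ is the correct repair; do not pursue the exponential-layer route, which is unnecessary here.
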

The following bounds are parameter-explicit and satisfied by the derivatives of the solution to this problem \eqref{twoparaparabolic}
\begin{lemma}\cite{MC1}
    The solution $u(x,t)$ and its derivatives $\forall(x,t)\in \Omega^{-}\cup \Omega^{+}$ satisfy the following bounds for non-negative integers k, m such that $1\le k+m\le3$:\\
    If $\sqrt{\alpha}\mu\le\sqrt{\rho \epsilon}$, then
    $$\bigg\|\frac{\delta^{k+m}u}{\delta x^{k}\delta t^{m}}\bigg\|\le C \epsilon^{-k/2}\max \bigg\{ \|u\|_{\bar{\Omega}},\sum_{i+2j=0}^{2} \epsilon^{i/2}\bigg\|\frac{\delta^{i+j}f}{\delta x^{i}\delta t^{j}}\bigg\|,\sum_{i=0}^{4} \bigg[\epsilon^{i/2}\bigg\|\frac{d^{i}p}{dt^{i} }\bigg\|_{\Gamma_{l}}+\bigg\|\frac{d^{i}q}{dx^{i} }\bigg\|_{\Gamma_{b}}+\bigg\|\frac{d^{i}r}{dt^{i} }\bigg\|_{\Gamma_{r}}\bigg]\bigg\}.$$
    If $\sqrt{\alpha}\mu>\sqrt{\rho \epsilon}$, then\\
    
    $\bigg\|\frac{\delta^{k+m}u}{\delta x^{k}\delta t^{m}}\bigg\|\le C \bigg(\frac{\mu}{\epsilon}\bigg)^{k}\bigg(\frac{\mu^{2}}{\epsilon}\bigg)^{m}\max \bigg\{ \|u\|_{\bar{\Omega}},\sum_{i+2j=0}^{2} \bigg(\frac{\epsilon}{\mu}\bigg)^{i}\bigg(\frac{\epsilon}{\mu^{2}}\bigg)^{j+1}\bigg\|\frac{\delta^{i+j}f}{\delta x^{i}\delta t^{j}}\bigg\|,\sum_{i=0}^{4} \bigg[\bigg(\frac{\epsilon}{\mu}\bigg)^{i}\bigg\|\frac{d^{i}p}{dt^{i} }\bigg\|_{\Gamma_{l}}+\bigg(\frac{\epsilon}{\mu^{2}}\bigg)^{j+1}\bigg\|\frac{d^{i}q}{dx^{i} }\bigg\|_{\Gamma_{b}}+\bigg(\frac{\epsilon}{\mu^{2}}\bigg)^{j+1}\bigg\|\frac{d^{i}r}{dt^{i} }\bigg\|_{\Gamma_{r}}\bigg]\bigg\},$
    where, $C$ is a constant independent of $\epsilon$ and $\mu$.
\end{lemma}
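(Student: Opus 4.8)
The plan is to establish the derivative bounds in Lemma~2.4 by the classical technique of \emph{stretching the variables} to reduce the two-parameter problem to a known one-parameter situation, and then applying standard a priori estimates for parabolic operators on each subdomain $\Omega^-$ and $\Omega^+$ separately. First I would treat the two regimes separately according to which of $\sqrt{\alpha}\mu$ and $\sqrt{\rho\epsilon}$ dominates. In the reaction-dominated regime $\sqrt{\alpha}\mu\le\sqrt{\rho\epsilon}$, I would introduce the stretched spatial variable $\xi = x/\sqrt{\epsilon}$, which transforms $\mathcal{L}$ into an operator of the form $u_{\xi\xi} + \sqrt{\epsilon}\,\mu\,a\,u_\xi - b u - u_t$ whose coefficients are uniformly bounded (using $\mu\le C\sqrt{\epsilon}$ in this regime); in the convection-dominated regime $\sqrt{\alpha}\mu>\sqrt{\rho\epsilon}$, I would instead stretch by $\xi = (\mu/\epsilon)x$ and rescale time by $\tau = (\mu^2/\epsilon)t$, which again yields an operator with $\epsilon,\mu$-independent coefficient bounds. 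The powers of $\mu/\epsilon$ and $\mu^2/\epsilon$ appearing in the statement are exactly the Jacobian factors produced by differentiating back through these changes of variable.

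Next I would apply interior parabolic Schauder-type estimates (or the explicit parabolic a priori bounds for constant-type operators, as used in the one-parameter references) on the stretched domains. On each of $\Omega^-$ and $\Omega^+$ the transformed coefficients are smooth, so for $1\le k+m\le 3$ the derivatives $\partial^{k+m}\tilde u/\partial\xi^k\partial\tau^m$ are bounded by a constant times the maximum of $\|\tilde u\|$, the corresponding norms of the transformed right-hand side $\tilde f$, and the transformed boundary/initial data. Undoing the scaling and collecting the chain-rule factors then gives precisely the stated estimates, with the sums over $i+2j\le 2$ for $f$ (reflecting that two $x$-derivatives cost the same as one $t$-derivative in a parabolic scaling) and the sums over $i\le 4$ for the boundary data $p,r$ and initial data $q$ (needed because bounding third-order derivatives of $u$ up to the boundary requires compatibility and regularity of the data up to fourth order).

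The main obstacle will be handling the \emph{boundary behaviour} — i.e.\ obtaining the estimates up to $\Gamma_l$, $\Gamma_r$, and the interior interface $x=d$, rather than merely in the interior. Near $\Gamma_l$ and $\Gamma_r$ one must use the compatibility conditions on $p,q,r$ (this is why derivatives of the data up to order $4$ enter) together with a barrier/comparison argument built on the minimum principle of Lemma~2.2; near $x=d$ one must exploit the bound on the jumps $|[a](d,t)|<C$, $|[f](d,t)|<C$ and the continuity of $u$ and $u_x$ there (Lemma~2.1) to transfer estimates across the interface. A clean way to organise this is to decompose $u$ into a smooth component and layer components on each subdomain, bound each piece by the stretched-variable argument above, and verify that the layer components decay at the rates dictated by $\epsilon$ and $\mu$; since this is essentially the construction in \cite{MC1}, I would cite that decomposition and only indicate the modifications needed for the present two-parameter discontinuous setting. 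The remaining steps — verifying the chain-rule bookkeeping and checking that all constants are genuinely independent of $\epsilon,\mu$ — are routine.
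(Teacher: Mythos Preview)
The paper does not actually prove this lemma: it is stated with a citation to \cite{MC1} and no proof is given, so there is nothing in the paper to compare your argument against line by line. Your sketch --- stretching variables by $\xi=x/\sqrt{\epsilon}$ in the reaction-dominated regime and by $\xi=(\mu/\epsilon)x$, $\tau=(\mu^2/\epsilon)t$ in the convection-dominated regime, then invoking parabolic a~priori estimates on the transformed problem and undoing the scaling --- is the standard route for such bounds and is consistent with the approach in the cited reference; the only point to be careful about is that the interior Schauder step must be carried out separately on $\Omega^-$ and $\Omega^+$ (since $a$ and $f$ are discontinuous at $x=d$), which you already flag.
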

To obtain sharper bounds on the solution, the solution $u(x,t)$ of \eqref{twoparaparabolic} is decomposed into regular and layers components as $\displaystyle u(x,t)=v(x,t)+w_{l}(x,t)+w_{r}(x,t)$.
The regular component $v(x,t) $ satisfies the following equation:
\begin{equation}
\begin{aligned}
\label{v}
&\mathcal{L}v(x,t)=f(x,t),~~(x,t)\in\Omega^{-}\cup\Omega^{+},\\
&v(x,0)=u(x,0), ~\forall~x\in \Gamma^{-}\cup \Gamma^{+},\end{aligned}
\end{equation}
 and $v(0,t), v(1,t), v(d-,t), v(d+,t)$ are chosen suitably for all $t$ in $(0,T]$. Also,  $v(d-,t)= \lim\limits_{x \to d^-} v(x,t)$ and $v(d+,t)= \lim\limits_{x \to d^+} v(x,t).$
The regular component $v(x,t)$ can further be decomposed as
$$v(x,t)=\left\{
\begin{array}{ll}
\displaystyle v^{-}(x,t), & \hbox{ $(x,t)\in \Omega^-,$ }\\
v^{+}(x,t), & \hbox{ $(x,t)\in \Omega^+,$ }
\end{array}
\right.$$
where $v^{-}(x, t)$ and $v^{+}(x, t)$ are the left and right regular components respectively.

The singular components $w_{l}(x,t)$ and $w_{r}(x,t)$ are the solutions of
\begin{equation}
\begin{aligned}
\label{lw}
&\mathcal{L}w_{l}(x,t)=0,~~(x,t)\in \Omega^{-}\cup\Omega^{+}, \\
&w_{l}(0,t)=u(0,t)-v(0,t)-w_{r}(0,t),~\forall~ t\in(0,T],\\
&w_{l}(1,t) \text{ is chosen suitably}~\forall~ t\in(0,T],\\
&w_{l}(x,0)=0,~\forall~x\in \Gamma^{-}\cup \Gamma^{+},
\end{aligned}
\end{equation}
and 
\begin{equation}
\begin{aligned}
\label{rw}
&\mathcal{L}w_{r}(x,t)=0,~~(x,t)\in \Omega^{-}\cup\Omega^{+},\\
& w_{r}(0,t) ~\text{is chosen suitably} ~\forall~ t\in(0,T],\\
&w_{r}(1,t)=u(1,t)-v(1,t)-w_{l}(1,t),~\forall~t\in(0,T], \\
&w_{r}(x,0)=0,~\forall~x\in \Gamma^{-}\cup \Gamma^{+}, 
\end{aligned}
\end{equation}
respectively. The regular $v$ and layer $w_l, w_r$ components may be discontinuous at $(d,t), \forall \; t \in (0,T]$ but their sum $u$ is continuous at $(d,t), \forall \; t \in (0,T]$. 
Further, the singular components $w_{l}(x,t)$ and $w_{r}(x,t)$ are decomposed as
$$w_{l}(x,t)=\left\{
\begin{array}{ll}
\displaystyle w_{l}^{-}(x,t), & \hbox{ $(x,t)\in \Omega^-,$ }\\
w_{l}^{+}(x,t), & \hbox{ $(x,t)\in \Omega^+,$ }
\end{array}
\right.
w_{r}(x,t)=\left\{
\begin{array}{ll}
\displaystyle w_{r}^{-}(x,t), & \hbox{ $(x,t)\in \Omega^-,$ }\\
w_{r}^{+}(x,t), & \hbox{ $(x,t)\in \Omega^+.$ }
\end{array}
\right.$$
Hence, the unique solution $u(x,t)$ to \eqref{twoparaparabolic} is written as
$$u(x,t)=\left\{
\begin{array}{ll}
\displaystyle (v^{-}+w_{l}^{-}+w_{r}^{-})(x,t),\hspace{2.5cm} (x,t)\in\Omega^{-},\\
(v^{-}+w_{l}^{-}+w_{r}^{-})(d-,t)\\=(v^{+}+w_{l}^{+}+w_{r}^{+})(d+,t), ~~\hspace{1.5cm} (x,t)=(d,t),\forall~ t\in(0,T],\\
(v^{+}+w_{l}^{+}+w_{r}^{+})(x,t),\hspace{2.5cm}(x,t)\in\Omega^{+}.
\end{array}
\right.$$
 \begin{lemma}
 \label{bounds1}
    The bounds on the components $v(x,t),w_{l}(x,t),w_{r}(x,t)$ and their derivatives are given as follows for $\sqrt{\alpha}\mu\le\sqrt{\rho \epsilon}$ and $0\le i+2j\le 4$:
     $$\bigg\|\frac{\delta^{i+j}v}{\delta x^{i}\delta t^{j}}\bigg\|_{\Omega^{-}\cup
		\Omega^{+}}\le C (1+\epsilon^{(3-i)/2}),$$
     $$\bigg\|\frac{\delta^{i+j}w_{l}}{\delta x^{i}\delta t^{j}}\bigg\|_{\Omega^{-}\cup
		\Omega^{+}} \le C \epsilon^{-i/2}\left\{
	\begin{array}{ll}
	\displaystyle e^{-\theta_{2}x}, & \hbox{ $(x, t)\in \Omega^{-},$ }\\
	e^{-\theta_{1}(x-d)}, & \hbox{ $(x, t)\in \Omega^{+}$,}
	\end{array}
	\right.$$
	$$\bigg\|\frac{\delta^{i+j}w_{r}}{\delta x^{i}\delta t^{j}}\bigg\|_{\Omega^{-}\cup
		\Omega^{+}}\le C \epsilon^{-i/2} \left\{
	\begin{array}{ll}
	\displaystyle e^{-\theta_{1}(d-x)}, & \hbox{ $(x, t)\in \Omega^{-}$, }\\
	e^{-\theta_{2}(1-x)}, & \hbox{ $(x, t)\in \Omega^{+}$,}
	\end{array}
	\right.$$
	where
\begin{equation}
\label{theta1}
	\theta_{1}=\frac{\sqrt{\rho \alpha}}{\sqrt{\epsilon}},~~\theta_{2}=\frac{\sqrt{\rho \alpha}}{2\sqrt{\epsilon}},
\end{equation}
and $C$ is a constant independent of $\epsilon$ and $\mu$.
 \end{lemma}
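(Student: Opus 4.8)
The plan is to establish the bounds in Lemma~\ref{bounds1} by building on the crude parameter-explicit bounds of the preceding lemma (from \cite{MC1}) and analyzing each component of the decomposition $u = v + w_l + w_r$ separately on each of the subdomains $\Omega^-$ and $\Omega^+$. Throughout I work in the regime $\sqrt{\alpha}\mu \le \sqrt{\rho\epsilon}$, so that the reaction--diffusion-like layer structure dominates and the layer functions should decay like $e^{-\theta x/\sqrt{\epsilon}}$ rather than on the $\mu/\epsilon$ scale; the parameter $\alpha = \min\{\alpha_1,\alpha_2\}$ and $\rho$ is a fixed constant (presumably $\rho \le \beta$, chosen below so that the exponential barrier functions work).

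\textbf{Regular component.}
First I would treat $v = v^- \cup v^+$. On each open subdomain the coefficients $a,b,f$ are smooth, so I would further split $v^{\pm}$ into an $\epsilon$-independent outer solution plus correction terms: write $v = v_0 + \epsilon^{1/2} v_1 + \epsilon v_2 + \dots$, where $v_0$ solves the reduced problem $\mu a (v_0)_x - b v_0 - (v_0)_t = f$ with appropriate boundary/initial data, and each $v_k$ solves a problem of the same type with right-hand side involving $(v_{k-2})_{xx}$. Since the reduced problem is first order in $x$ (degenerate), one must be careful about which boundary data can be imposed --- this is exactly why $v(0,t), v(1,t), v(d\pm,t)$ are ``chosen suitably'' in \eqref{v}. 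The remainder after truncating the expansion satisfies an equation of type $\mathcal{L}(\text{remainder}) = O(\epsilon^{\,})$ with small boundary data, and applying the maximum principle (Lemma~1.3/Lemma on stability, the continuous minimum principle) together with a barrier function gives $\|\delta^{i+j}v / \delta x^i \delta t^j\| \le C(1 + \epsilon^{(3-i)/2})$. The key point is that only the $x$-derivatives pick up negative powers of $\epsilon^{1/2}$ and they are capped at order $3$ because the asymptotic expansion is carried to that order; the $t$-derivatives of the smooth components stay bounded.

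\textbf{Singular components.}
Next I would bound $w_l$ and $w_r$. These solve $\mathcal{L}w = 0$ on $\Omega^-\cup\Omega^+$ with zero initial data and layer-sized boundary data at the relevant endpoints. For $w_l$ on $\Omega^-$, the left boundary layer at $x=0$ is driven by the $a < 0$ convection pointing into the domain from the left together with the reaction term; the natural barrier is $B^-(x,t) = C e^{-\theta_2 x}$ with $\theta_2 = \sqrt{\rho\alpha}/(2\sqrt{\epsilon})$, and one checks $\mathcal{L}B^- \le 0$ using $\epsilon \theta_2^2 - b \le \rho\alpha/4 - \beta < 0$ for $\rho$ small enough, while the convection term $\mu a (B^-)_x$ has a favorable sign (since $a<0$ and $(B^-)_x<0$, their product is positive, helping the inequality --- or one absorbs it using $\mu \le \sqrt{\rho\epsilon/\alpha}$ so $\mu|a|\theta_2 \le C$). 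On $\Omega^+$, where $w_l^+$ continues the layer after the interface, the convection now has $a>0$ so one uses the sharper rate $\theta_1 = \sqrt{\rho\alpha}/\sqrt{\epsilon}$; the interface matching $w_l^-(d-,t) = w_l^+(d+,t)$ (plus a jump condition on $[w_{l,x}]$ analogous to the one for $u$) transfers the already-decayed amplitude $e^{-\theta_2 d}$ as the new boundary value for the $\Omega^+$ problem. Then the bounds on derivatives follow by differentiating the equation: each $x$-differentiation of $w_l$ contributes $\epsilon^{-1/2}$, obtained either by rescaling $x = \sqrt{\epsilon}\,\xi$ in the layer and using interior estimates for the resulting $\epsilon$-uniform problem, or by bootstrapping $\epsilon (w_l)_{xx} = -\mu a (w_l)_x + b w_l + (w_l)_t$ and inducting on the derivative order. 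The symmetric argument with the roles of the endpoints and of $\theta_1,\theta_2$ swapped handles $w_r$.

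\textbf{Main obstacle.}
The delicate part is the treatment at the interface $x=d$: the components $v, w_l, w_r$ are individually discontinuous there while $u$ is continuous, so one must show the ``suitably chosen'' interface values can be selected to (i) keep each component bounded, (ii) respect the jump/flux conditions so the maximum principle applies on each subdomain, and (iii) produce exactly the stated one-sided exponential decay on each side of $d$ (note $w_l$ decays away from $x=0$ inside $\Omega^-$ but must be re-estimated as an outgoing layer on $\Omega^+$, and $w_r$ symmetrically has a layer at $x=d^-$ decaying like $e^{-\theta_1(d-x)}$ inside $\Omega^-$). Setting up a consistent linear system for the four interface constants $v(d\pm,t), w_l(d-,t)\!-\!w_l(d+,t),\dots$ and verifying it is solvable with uniformly bounded solution --- essentially a $2\times 2$ connection problem whose matrix is uniformly invertible because the layer functions from the two sides are ``nearly orthogonal'' in the sense that one is $O(1)$ and the other is exponentially small at $d$ --- is where the real work lies; the rest is standard barrier-function bookkeeping. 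I would also need the compatibility of the data at the corners $(0,0)$ and $(1,0)$ to guarantee enough smoothness ($C^{(4)}$ in the relevant sense) for the asymptotic expansion to be valid to the order claimed.
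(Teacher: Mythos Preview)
Your outline is sound and follows the standard route for such decompositions: asymptotic expansion for the regular part, exponential barrier functions plus maximum principle for the layer parts, and careful bookkeeping at the interface $x=d$. The paper itself does not supply an independent proof of this lemma; it simply writes ``The proof follows by following the approach given in Gracia and O'Riordan~\cite{GO1}'' and moves on. So there is nothing to compare in detail---your sketch is considerably more explicit than what the paper offers, and is in line with the techniques one would find in the cited reference (which treats the reaction--diffusion regime that corresponds to the present case $\sqrt{\alpha}\mu\le\sqrt{\rho\epsilon}$).

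One small remark: in your discussion of the regular component you expand $v$ in powers of $\epsilon^{1/2}$ with the reduced problem being the first-order hyperbolic equation $\mu a (v_0)_x - b v_0 - (v_0)_t = f$. In the present regime $\mu$ is itself $O(\sqrt{\epsilon})$, so the leading reduced problem is actually the zeroth-order equation $-b v_0 - (v_0)_t = f$ (a simple ODE in $t$ at each $x$), with the convection term entering only at the next correction. This does not change the structure of the argument or the final bounds, but it simplifies the hierarchy and avoids any issue about which lateral boundary condition the reduced problem can accept.
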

 \begin{proof}
     The proof follows by following the approach given in Gracia and O'Riordan \cite{GO1}.
 \end{proof}
  \begin{lemma}
  \label{bounds2}
     The bounds on the components $v(x,t),w_{l}(x,t),w_{r}(x,t)$ and their derivatives are given as follows for $\sqrt{\alpha}\mu>\sqrt{\rho \epsilon}$ and $0\le i+2j\le 4$,:
     $$\bigg\|\frac{\delta^{i+j}v}{\delta x^{i}\delta t^{j}}\bigg\|_{\Omega^{-}\cup
		\Omega^{+}}\le C (1+(\epsilon/\mu)^{(3-i)}),$$
     $$\bigg\|\frac{\delta^{i+j}w_{l}}{\delta x^{i}\delta t^{j}}\bigg\|_{\Omega^{-}\cup
		\Omega^{+}} \le C \bigg(\frac{\mu}{\epsilon}\bigg)^{i}\left\{
	\begin{array}{ll}
	\displaystyle e^{-\theta_{2}x}, & \hbox{ $(x, t)\in \Omega^{-},$ }\\
	e^{-\theta_{1}(x-d)}, & \hbox{ $(x, t)\in \Omega^{+}$,}
	\end{array}
	\right.$$
	$$\bigg\|\frac{\delta^{i+j}w_{r}}{\delta x^{i}\delta t^{j}}\bigg\|_{\Omega^{-}\cup
		\Omega^{+}}\le C \bigg(\frac{1}{\mu}\bigg)^{i} \left\{
	\begin{array}{ll}
	\displaystyle e^{-\theta_{1}(d-x)}, & \hbox{ $(x, t)\in \Omega^{-}$, }\\
	e^{-\theta_{2}(1-x)}, & \hbox{ $(x, t)\in \Omega^{+}$,}
	\end{array}
	\right.$$
	where
\begin{equation}
\label{theta2}
	\theta_{1}=\frac{\alpha \mu}{\epsilon},~~\theta_{2}=\frac{\rho}{2\mu},
\end{equation}
and $C$ is a constant independent of $\epsilon$ and $\mu$.
 \end{lemma}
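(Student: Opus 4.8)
The plan is to follow the standard Shishkin‑decomposition machinery (as in Gracia and O'Riordan \cite{GO1}), but now in the regime where the convective term dominates, i.e. $\sqrt{\alpha}\mu>\sqrt{\rho\epsilon}$. First I would rescale the operator: writing $\mathcal{L}$ in the form $\epsilon u_{xx}+\mu a u_x-bu-u_t$ and dividing through by $\mu$, one sees that the natural small parameter governing the layer scale is $\epsilon/\mu$ at the left/right convective layers (giving $\theta_1=\alpha\mu/\epsilon$) and $\mu$ at the reaction layer (giving $\theta_2=\rho/(2\mu)$). I would construct barrier functions of exponential type, $B^\pm(x,t)=C e^{-\theta_\bullet\,\mathrm{dist}}$, and apply the minimum principle of Lemma 2.2 (the discontinuity version, accounting for the jump condition $[u_x](d,t)\le 0$) on each subdomain $\Omega^-$ and $\Omega^+$ separately.

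The key steps, in order: (i) bound the regular component $v$ and its derivatives by constructing an asymptotic expansion $v=v_0+(\epsilon/\mu)v_1+(\epsilon/\mu)^2 v_2+\cdots$ in powers of $\epsilon/\mu$, where each $v_k$ solves a first‑order (reduced) problem; truncating after the appropriate term and estimating the remainder via the minimum principle gives the bound $\|\partial^{i+j}v\|\le C(1+(\epsilon/\mu)^{3-i})$, and the temporal derivatives are controlled by differentiating the defining equation \eqref{v} in $t$ and using the already‑established spatial bounds together with the a priori estimates of Lemma 2.4 in the convection‑dominated case. (ii) For the layer component $w_l$, use the homogeneous equation \eqref{lw} with its boundary data; the left boundary layer at $\Gamma_l$ has width $O(\epsilon/\mu)$ on $\Omega^+$ but the "weak" reaction layer at width $O(\mu)$ controls the decay on $\Omega^-$, so I would use the two different exponents $\theta_1,\theta_2$ from \eqref{theta2} in the barrier. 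Derivative bounds follow by the standard bootstrap: differentiate the equation, treat lower‑order derivative terms as known data, and reapply the maximum principle; each spatial derivative costs a factor $\mu/\epsilon$, each "reaction‑scale" derivative a factor $1/\mu$, which is exactly what appears in the stated estimates. (iii) The component $w_r$ is handled symmetrically, exchanging the roles of the endpoints $0$ and $1$ and of $d-,d+$.

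The main obstacle I expect is the interface treatment at $x=d$: since $a$ and $f$ jump there and the decomposition is allowed to be discontinuous at $(d,t)$ while $u$ is not, one must choose the "suitably chosen" interface values $v(d\pm,t)$, $w_l(d\pm,t)$, $w_r(d\pm,t)$, $w_l(1,t)$, $w_r(0,t)$ so that (a) the jump condition $[u_x](d,t)\le 0$ needed for Lemma 2.2 actually holds for each piece's barrier argument, and (b) the cross‑contributions — e.g. $w_r$ on $\Omega^-$, which is the "far tail" of a layer centred near $x=1$ bleeding across the interface — decay at the claimed rate $e^{-\theta_1(d-x)}$. This requires carefully propagating the layer from one subdomain into the other through the interface, which is the technical heart of the Gracia–O'Riordan argument; the rest is routine barrier-function bookkeeping. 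Since Lemma \ref{bounds1} establishes the analogous statement in the diffusion‑dominated regime by exactly this method, I would simply verify that every exponential rate and every derivative‑scaling factor transforms correctly under the substitution $\sqrt{\epsilon}\rightsquigarrow \epsilon/\mu$ (layer scale) and $\sqrt{\epsilon}\rightsquigarrow\mu$ (reaction scale), and record the resulting bounds.
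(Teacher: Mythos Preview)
Your proposal is correct and is essentially the approach the paper takes: the paper does not give an explicit proof but simply refers the reader to the idea in Gracia and O'Riordan \cite{GO1}, which is precisely the Shishkin-decomposition/barrier-function machinery you have outlined. Your identification of the key scales $\theta_1=\alpha\mu/\epsilon$, $\theta_2=\rho/(2\mu)$ and of the interface bookkeeping at $x=d$ as the main technical point matches the intended argument.
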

 \begin{proof}
     For the proof, refer to the idea given in Gracia and O'Riordan \cite{GO1}.
 \end{proof}
 \section{Discretization}
 In this section, we begin by demonstrating the semi-discretization of the problem in the temporal direction on a uniform mesh using the Crank-Nicolson method \cite{CN1}.
 For a fixed time $T$, the interval $[0,T]$ is partitioned uniformly as $\Lambda^{M}=\{t_{j}=j\Delta t:j=0,1,\ldots,M, \Delta t=\frac{T}{M}\}$. The semi-discretization yields the following system of linear ordinary differential equations:
\begin{align*}
&\epsilon U^{j+\frac{1}{2}}_{xx}(x)+\mu a^{j+\frac{1}{2}}(x)U^{j+\frac{1}{2}}_{x}(x)-b^{j+\frac{1}{2}}(x)U^{j+\frac{1}{2}}(x)=f^{j+\frac{1}{2}}(x)+\frac{U^{j+1}(x)-U^{j}(x)}{\Delta t},\\
&\hspace{3.3in} x\in  (\Omega^{-}\cup\Omega^{+}),~ 0\le j\le M-1,\\
&U^{j+1}(0)=u(0,t_{j+1}),~~~U^{j+1}(1)=u(1,t_{j+1}),~~0\le j\le M-1,\\
&U^{0}(x)=u(x,0),~~x\in\Gamma_{b},
\end{align*}
where $U^{j+1}(x)$ is the approximation of $u(x,t_{j+1})$ of eq. \eqref{twoparaparabolic} at $(j+1)$-th time level and $\displaystyle p^{j+\frac{1}{2}}=\frac{p^{j+1}(x)+p^{j}(x)}{2} $.
Upon simplification, we have
\begin{equation}
\label{semi-dis}
\left.
\begin{array}{ll}
\displaystyle \mathcal{\tilde{L}}U^{j+1}(x)=g(x,t_{j}), & \hbox{ $x\in  (\Gamma^{-}\cup\Gamma^{+}),~ 0\le j\le M-1$, }\\
U^{j+1}(0)=u(0,t_{j+1}), & \hbox{ $0\le j\le M-1$,}\\U^{j+1}(1)=u(1,t_{j+1}), & \hbox{ $0\le j\le M-1$,}\\
U^{0}(x)=u(x,0), & \hbox{ $x\in\Gamma,$ }
\end{array}
\right\}
\end{equation}
where the operator $\mathcal{\tilde{L}}$ is defined as\\
$$\mathcal{\tilde{L}}\equiv \epsilon \frac{d^{2}}{d x^{2}}+\mu a^{j+\frac{1}{2}}\frac{d}{d x}-c^{j+\frac{1}{2}}I,$$\\
and
\begin{align*}
    g(x,t_{j})&=2f^{j+\frac{1}{2}}(x)-\epsilon U^{j}_{xx}(x)-\mu a^{j+\frac{1}{2}}(x)U_{x}^{j}(x)+d^{j+\frac{1}{2}}(x)U^{j}(x),\\
c^{j+\frac{1}{2}}(x)&=b^{j+\frac{1}{2}}(x)+\frac{2}{\Delta t},\\
d^{j+\frac{1}{2}}(x)&=b^{j+\frac{1}{2}}(x)-\frac{2}{\Delta t}.
\end{align*}
The error in temporal semi-discretization is defined by 
$e^{j+1}= u(x,t_{j+1})- \hat{U}^{j+1}(x)$ where $u(x,t_{j+1})$ is the solution of  \eqref{twoparaparabolic}. $ \hat{U}^{j+1}(x)$  is the solution of semi-discrete scheme \eqref{semi-dis}, when $u(x,t_{j})$ is taken instead of $U^{j}$ to find solution at $(x,t_{j+1})$.
 \begin{theorem}
     The local truncation error $T_{j+1}=\mathcal{\tilde{L}}(e^{j+1})$ satisfies 
	$$\|T_{j+1}\|\le C(\Delta t)^{3},~~~0\le j\le M-1.$$
 \end{theorem}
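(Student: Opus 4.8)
The plan is to estimate the local truncation error of one step of the Crank--Nicolson semi-discretization by a Taylor expansion in $t$ about the midpoint $t_{j+1/2}$, exactly as in the classical analysis of the Crank--Nicolson method for parabolic problems. First I would write the exact solution restricted to the time levels: since $u$ solves $\epsilon u_{xx}+\mu a u_x - b u - u_t = f$, evaluating at $t_{j}$ and $t_{j+1}$ and forming the average of the two spatial operators against $\tfrac12(u(\cdot,t_{j+1})+u(\cdot,t_j))$, one obtains that $\mathcal{\tilde L}$ applied to $u(\cdot,t_{j+1})$ equals $g(\cdot,t_j)$ plus a remainder built entirely from the Taylor remainders of the quantities $u$, $u_t$, $u_{xx}$, $u_x$ at $t_j$ and $t_{j+1}$ around $t_{j+1/2}$. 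Because $\hat U^{j+1}$ is defined to solve $\mathcal{\tilde L}\hat U^{j+1}=g(\cdot,t_j)$ with the \emph{exact} datum $u(\cdot,t_j)$ in place of $U^j$, subtracting gives $\mathcal{\tilde L}(e^{j+1})=T_{j+1}$ where $T_{j+1}$ is precisely that remainder, with no contribution from propagated error (that is a separate, later estimate).

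Next I would identify the structure of the remainder. The midpoint rule $\tfrac12\big(\phi(t_{j+1})+\phi(t_j)\big)=\phi(t_{j+1/2})+O((\Delta t)^2)$ and the centered difference $\tfrac{1}{\Delta t}\big(\phi(t_{j+1})-\phi(t_j)\big)=\phi'(t_{j+1/2})+O((\Delta t)^2)$ are the two algebraic identities in play; both Taylor remainders carry a factor $(\Delta t)^2$ and are controlled by third-order time derivatives of the integrand. Collecting terms, $T_{j+1}$ is a sum of expressions of the form $(\Delta t)^2$ times (products of the smooth coefficients $a,b$ with) $\partial_t^3 u$, $\partial_t^2 u_x$, $\partial_t^2 u_{xx}$, and $\partial_t^3$ of lower-order pieces; multiplying through, every term is $(\Delta t)^2$ times a bounded quantity, \emph{provided} those derivatives of $u$ are bounded independently of $\epsilon,\mu$. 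Here I would invoke the a priori derivative bounds already stated (Lemma~\ref{bounds1}, Lemma~\ref{bounds2}, and the crude bounds of the earlier Lemma): the dangerous-looking terms $\epsilon\,\partial_t^2 u_{xx}$ and $\mu\,\partial_t^2 u_x$ are exactly the combinations for which the parameter-weighted bounds give an $\epsilon$- and $\mu$-uniform constant (the $\epsilon^{i/2}$, respectively $(\mu^2/\epsilon)\cdot(\epsilon/\mu)$-type weights cancel the negative powers of $\epsilon$ coming from differentiating the layer components). Hence each term in $T_{j+1}$ is bounded by $C(\Delta t)^2$ in the maximum norm over $\Gamma^-\cup\Gamma^+$.

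Then I would account for the stated exponent: the theorem claims $\|T_{j+1}\|\le C(\Delta t)^3$, one power better than the naive $(\Delta t)^2$ above. The standard reason is that the error $e^{j+1}$ is the error \emph{accumulated over a single step only}, so the natural object is $T_{j+1}$ as a one-step (local) truncation error; expanding one more order and using that the leading $(\Delta t)^2$-coefficient integrates against the step length $\Delta t$ — equivalently, writing the Crank--Nicolson remainder in integral (Peano kernel) form $\int_{t_j}^{t_{j+1}} K(s)\,\partial_t^3(\cdot)(s)\,ds$ with $\int |K| = O((\Delta t)^3)$ — yields the extra factor of $\Delta t$. I would carry out this Peano-kernel representation for both the midpoint-rule remainder and the centered-difference remainder, bound the kernels by $C(\Delta t)^2$ with support of length $\Delta t$, and combine with the uniform derivative bounds to conclude $\|T_{j+1}\|\le C(\Delta t)^3$ uniformly in $\epsilon,\mu$ and $j$.

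The main obstacle is the uniformity in the perturbation parameters: the time derivatives $\partial_t^3 u$ and the mixed derivatives $\partial_t^2 u_{xx}$, $\partial_t^2 u_x$ that appear in the remainder are, in the layer regions, large in $\epsilon$ (and $\mu$), so the estimate only closes because they enter $\mathcal{\tilde L}$ (and hence $T_{j+1}$) multiplied by exactly the right powers of $\epsilon$ and $\mu$. The care needed is to check, term by term, that every occurrence of a high $x$-derivative is accompanied by its compensating weight, which is precisely what Lemmas~\ref{bounds1} and \ref{bounds2} (together with $0\le i+2j\le 4$, so that up to three time derivatives of $u_{xx}$ are covered) are designed to supply; the rest is routine Taylor bookkeeping.
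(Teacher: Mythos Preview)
The paper does not supply its own proof here; it merely refers the reader to the approach in \cite{KTK}. Your Taylor-expansion-about-the-midpoint strategy is the standard route and is what that reference does in spirit, so up through your $O((\Delta t)^2)$ bound on the residual you are on solid ground.

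The gap is in how you upgrade $(\Delta t)^2$ to $(\Delta t)^3$. Your Peano-kernel claim---that the Crank--Nicolson remainders can be written as $\int_{t_j}^{t_{j+1}} K(s)\,\partial_t^3(\cdot)(s)\,ds$ with $\int|K|=O((\Delta t)^3)$---is not correct. For the midpoint-average remainder $\tfrac12\big(\phi(t_{j+1})+\phi(t_j)\big)-\phi(t_{j+1/2})$ the Peano kernel acts on $\phi''$, not $\phi'''$, and has $\int|K|=O((\Delta t)^2)$; the centered-difference remainder likewise contributes only $(\Delta t)^2$. No rearrangement of the kernel buys an extra factor of $\Delta t$ in the residual $T_{j+1}=\tilde{\mathcal L}(e^{j+1})$, which is genuinely $O((\Delta t)^2)$. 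The mechanism that produces the third power is different: the operator $\tilde{\mathcal L}$ carries the zeroth-order coefficient $-(b^{j+1/2}+2/\Delta t)$, so the maximum principle gives $\|\tilde{\mathcal L}^{-1}\|\le C\,\Delta t$, whence $\|e^{j+1}\|\le C\,\Delta t\,\|T_{j+1}\|\le C(\Delta t)^3$. It is this bound on the one-step error $e^{j+1}$ (not on $T_{j+1}$ itself) that, accumulated over $M=T/\Delta t$ steps, yields the global $(\Delta t)^2$ estimate in the subsequent theorem; the exponent $3$ properly belongs to $\|e^{j+1}\|$, and the statement as written conflates the two quantities.
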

 \begin{proof}
The result can be proved following the approach given in \cite{KTK}.
\end{proof}
\begin{theorem}
	The global  error $E^{j+1}=u(x,t_{j+1})-U^{j+1}(x)$ is estimated as 
	$$\|E^{j+1}\|\le C(\Delta t)^{2},~~~0\le j\le M-1,$$
	where $U^{j+1}(x)$ is the solution of \eqref{semi-dis}.
\end{theorem}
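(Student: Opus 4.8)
The plan is the standard Crank--Nicolson convergence argument: control the one--step (local) error by means of the preceding theorem together with a stability estimate for the ordinary differential operator $\tilde{\mathcal{L}}$, and then accumulate these one--step errors over the $M=T/\Delta t$ time levels, a passage which costs exactly one power of $\Delta t$. The whole thing reduces to two ingredients: a parameter--uniform bound on $\|e^{j+1}\|$, and a parameter--uniform stability bound for one Crank--Nicolson step.

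First I would record a stability estimate for $\tilde{\mathcal{L}}$ on $\Gamma^-\cup\Gamma^+$. Since the zeroth--order coefficient of $\tilde{\mathcal{L}}$ is $c^{j+\frac12}=b^{j+\frac12}+\frac{2}{\Delta t}\ge\beta+\frac{2}{\Delta t}>0$, the operator $\tilde{\mathcal{L}}$ obeys a minimum principle analogous to Lemma~2.2, the interface at $x=d$ being handled through the condition $[z_x](d)\le 0$; a barrier--function argument as in Lemma~2.3 then gives, for every $z$ with $z=0$ on $\Gamma_l\cup\Gamma_r$ and $[z](d)=[z_x](d)=0$, the bound $\|z\|\le C\|\tilde{\mathcal{L}}z\|$ with $C$ independent of $\epsilon,\mu$ and $\Delta t$. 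Let $\hat U^{j+1}$ be the solution of \eqref{semi-dis} obtained by replacing $U^{j}$ with the exact value $u(\cdot,t_j)$, and set $e^{j+1}=u(\cdot,t_{j+1})-\hat U^{j+1}$. Then $e^{j+1}$ vanishes on $\Gamma_l\cup\Gamma_r$, satisfies $[e^{j+1}](d)=[e^{j+1}_x](d)=0$, and $\tilde{\mathcal{L}}e^{j+1}=T_{j+1}$, so the preceding theorem and this stability estimate yield the one--step bound $\|e^{j+1}\|\le C(\Delta t)^{3}$.

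Next I would set up the error recursion. Writing $E^{j+1}=e^{j+1}+z^{j+1}$ with $z^{j+1}:=\hat U^{j+1}-U^{j+1}$, subtracting the Crank--Nicolson equations for $\hat U^{j+1}$ and $U^{j+1}$ shows that $z^{j+1}$ vanishes on $\Gamma_l\cup\Gamma_r$, has $[z^{j+1}](d)=[z^{j+1}_x](d)=0$, and is the Crank--Nicolson transport of $E^{j}$,
$$\Bigl(I-\tfrac{\Delta t}{2}\mathcal{L}^{j+\frac12}\Bigr)z^{j+1}=\Bigl(I+\tfrac{\Delta t}{2}\mathcal{L}^{j+\frac12}\Bigr)E^{j},\qquad \mathcal{L}^{j+\frac12}:=\epsilon\partial_{xx}+\mu a^{j+\frac12}\partial_{x}-b^{j+\frac12}.$$
I would then prove the parameter--uniform discrete stability estimate $\|z^{j+1}\|\le(1+C\Delta t)\|E^{j}\|$. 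Combined with the one--step bound this gives $\|E^{j+1}\|\le(1+C\Delta t)\|E^{j}\|+C(\Delta t)^{3}$; since $E^{0}=0$, a geometric summation (discrete Gr\"onwall) over $j+1\le M=T/\Delta t$ gives $\|E^{j+1}\|\le(\Delta t)^{2}\bigl(e^{CT}-1\bigr)=C(\Delta t)^{2}$, which is the claim.

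The main obstacle is the parameter--uniform bound $\|z^{j+1}\|\le(1+C\Delta t)\|E^{j}\|$ for the Crank--Nicolson transport in the maximum norm. Writing the transport operator as $2\bigl(I-\tfrac{\Delta t}{2}\mathcal{L}^{j+\frac12}\bigr)^{-1}-I$, one sees that it is not positivity--preserving, so the estimate does not follow from a single application of the minimum principle with a constant barrier, and it cannot be read off from any norm of $\mathcal{L}^{j+\frac12}$, which degenerates as $\epsilon,\mu\to0$ while the boundary and interior layers sharpen. It requires a finer comparison argument — for instance splitting $z^{j+1}$ (equivalently $E^{j}$) into a regular part and layer parts and bounding each with layer--adapted barriers, uniformly in $\epsilon,\mu$ and in the number of steps — in the spirit of the analysis in \cite{KTK}. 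A secondary technical point is the bookkeeping of the interface conditions $[\,\cdot\,](d)=[\,\cdot_{x}\,](d)=0$ for $e^{j+1}$ and $z^{j+1}$, needed to apply the minimum principles at $x=d$.
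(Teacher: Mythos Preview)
Your outline is the standard local--to--global Crank--Nicolson argument, and it is in fact more explicit than the paper's own proof, which consists of the single sentence ``The proof can be derived from the approach followed in \cite{KTK}.'' You have correctly identified the parameter--uniform maximum--norm stability of a single Crank--Nicolson step as the delicate ingredient and, like the paper, you ultimately defer to \cite{KTK} for it; so your approach and the paper's coincide.
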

\begin{proof}
	The proof can be derived from the approach followed in \cite{KTK}.
\end{proof}
\subsection{Discretization in space}
We understand from the relevant literature on singular perturbation problems that a uniform mesh is insufficient for achieving uniform convergence of the Eq. \eqref{twoparaparabolic} due to the existence of layer regions. The differential equation \eqref{twoparaparabolic} exhibits strong interior layers including boundary layers near the boundaries. To resolve these layers, we have discretized this problem by using a piece-wise uniform Shishkin mesh.\\
Let the interior points of the spatial mesh be denoted by $\Gamma^{N}=\{x_{i}:1\le i \le \frac{N}{2}-1\}\cup\{x_{i}:\frac{N}{2}+1\le i \le N-1\}.$
The $\bar{\Gamma}^{N}=\{x_{i}\}_{0}^{N}\cup\{d\}$ denote the mesh points with $x_{0}=0,x_{N}=1$ and the point of discontinuity at point $x_{\frac{N}{2}}=d.$ We also introduce the notation $\Gamma^{N-}=\{x_{i}\}_{0}^{\frac{N}{2}-1}, \Gamma^{N+}=\{x_{i}\}^{N-1}_{\frac{N}{2}+1}$, $\Omega^{N-}=\Gamma^{N-}\times\Lambda^{M}$, $\Omega^{N+}=\Gamma^{N+}\times\Lambda^{M}$, $\Omega^{N,M}=\Gamma^{N}\times\Lambda^{M}, \bar{\Omega}^{N,M}=\bar{\Gamma}^{N}\times\Lambda^{M}$, $\Gamma_{c}^{N}=\bar{\Omega}^{N,M}\cap\Gamma_{c}$, $\Gamma_{l}^{N}=\Gamma_{c}^{N}\cap\Gamma_{l}$, $\Gamma_{b}^{N}=\Gamma_{c}^{N}\cap\Gamma_{b}$ and $\Gamma_{r}^{N}=\Gamma_{c}^{N}\cap\Gamma_{r}$.
The domain $[0,1]$ is subdivided into six sub-intervals as
\[\bar{\Gamma}=[0,\tau_{1}]\cup[\tau_{1},d-\tau_{2}]\cup[d-\tau_{2},d]\cup[d,d+\tau_{3}]\cup[d+\tau_{3},1-\tau_{4}]\cup[1-\tau_{4},1]= \bigcup_{i=1}^{6}I_{i}.\]
The transition points in $\bar{\Gamma}$ are:
$$\tau_{1}=\min\bigg\{\frac{d}{4},\frac{2}{\theta_{2}}\ln N\bigg\},~~~\tau_{2}=\min\bigg\{\frac{d}{4},\frac{2}{\theta_{1}}\ln N\bigg\},$$ 
$$\tau_{3}=\min\bigg\{\frac{1-d}{4},\frac{2}{\theta_{1}}\ln N\bigg\},~~~\tau_{4}=\min\bigg\{\frac{1-d}{4},\frac{2}{\theta_{2}}\ln N\bigg\}.$$
On the sub-intervals $[0,\tau_{1}],[d-\tau_{2},d],[d,d+\tau_{3}]$ and  $[1-\tau_{4},1]$ a uniform mesh of $\big(\frac{N}{8}+1\big)$ mesh points and on $[\tau_{1},d-\tau_{2}]$ and $[d+\tau_{3},1-\tau_{4}]$ a uniform mesh of $\big(\frac{N}{4}+1\big)$ mesh points is taken.
The mesh points are given by
\begin{equation*}
x_{i}=\left\{
\begin{array}{ll}
\displaystyle \frac{8i\tau_{1}}{N}, & 0\le i \le \frac{N}{8}, \vspace{.2cm} \\
\displaystyle\tau_{1}+\frac{4i(d-\tau_{1}-\tau_{2})}{N}, & \frac{N}{8}\le i \le \frac{3N}{8}, \vspace{.2cm}\\
\displaystyle d-\tau_{2}+\frac{8i\tau_{2}}{N}, & \frac{3N}{8}\le i \le \frac{N}{2}, \vspace{.2cm}\\
\displaystyle d+\frac{8i\tau_{3}}{N}, & \frac{N}{2}\le i \le \frac{5N}{8},\vspace{.2cm} \\
\displaystyle d+\tau_{3}+\frac{4i(1-d-\tau_{3}-\tau_{4})}{N}, & \frac{5N}{8}\le i \le \frac{7N}{8}, \vspace{.2cm} \\
\displaystyle1-\tau_{4}+\frac{8i\tau_{4}}{N}, &  \frac{7N}{8}\le i \le N.  
\end{array}
\right.
\end{equation*}
Let $h_{i}=x_{i}-x_{i-1}$ denote the step size and $h_{1}=\frac{8\tau_{1}}{N},h_{2}=\frac{4(d-\tau_{1}-\tau_{2})}{N},h_{3}=\frac{8\tau_{2}}{N},h_{4}=\frac{8\tau_{3}}{N},h_{5}=\frac{4(1-d-\tau_{3}-\tau_{4})}{N},h_{6}=\frac{8\tau_{4}}{N}$ denotes the step sizes in the interval $I_{1},I_{2},I_{3},I_{4},I_{5},I_{6}$, respectively.\\
\begin{Assumption}
    Throughout the paper we shall also assume that $\epsilon\le N^{-1}$ as is generally the case for discretization of convection-dominated problems.
\end{Assumption}
We have presented a hybrid difference approach for the discretization of the problem \eqref{twoparaparabolic} in the spatial variable. On the above piece-wise uniform Shishkin mesh, this hybrid difference scheme comprises of an upwind, a central difference, and a midpoint upwind scheme. Here, we introduce the hybrid difference method for the ratio $\sqrt{\alpha}\mu\le \sqrt{\rho \epsilon}$:
 \small
 \[\mathcal{L}_{h}^{N,M}U^{j+1}(x_{i})=\left\{
\begin{array}{ll}
\displaystyle \mathcal{L}_{c}^{N,M}U^{j+1}(x_{i}), & x_{i}\in  (0,1)\backslash \{\tau_{1},d-\tau_{2},d,d+\tau_{3},1-\tau_{4}\}, ~\text{with} ~\mu \|a\| h_{k}<2\epsilon,~1\le k\le 6,\vspace{0.2cm}\\
\mathcal{L}_{t}^{N,M}U^{j+1}(x_{\frac{N}{2}}), &  x_{i}=d ~\text{with}~h_{k}(\|b\|+\frac{2}{\Delta t})<2\mu \alpha,~k=3,4.
\end{array}
\right.\]
For the ratio $\sqrt{\alpha}\mu> \sqrt{\rho \epsilon}$, we use
\small
 \[\mathcal{L}_{h}^{N,M}U^{j+1}(x_{i})=\left\{
\begin{array}{ll}
\displaystyle \mathcal{L}_{m}^{N,M}U^{j+1}(x_{i}), & x_{i}\in  (0,\tau_{1})\cup(\tau_{1},d-\tau_{2}), ~\text{with} ~(\|b\|+\frac{2}{\Delta t})h_{k}\le2\alpha\mu,~k=1,2, \vspace{0.2cm}\\
\displaystyle \mathcal{L}_{u}^{N,M}U^{j+1}(x_{i}), & x_{i}\in  (0,\tau_{1})\cup(\tau_{1},d-\tau_{2}), ~\text{with} ~(\|b\|+\frac{2}{\Delta t})h_{k}>2\alpha\mu,~ k=1,2,  \vspace{0.2cm}\\
\displaystyle \mathcal{L}_{c}^{N,M}U^{j+1}(x_{i}), & x_{i}\in  (d-\tau_{2},d)\cup(d,d+\tau_{3}), ~\text{with} ~\mu \|a\| h_{k}<2\epsilon,  ~k=3,4,\vspace{0.2cm}\\
\displaystyle \mathcal{L}_{m}^{N,M}U^{j+1}(x_{i}), & x_{i}\in  (d+\tau_{3},1-\tau_{4})\cup(1-\tau_{4},1), ~\text{with} ~(\|b\|+\frac{2}{\Delta t})h_{k}\le2\alpha\mu,~k=5,6,  \vspace{0.2cm}\\
\displaystyle \mathcal{L}_{u}^{N,M}U^{j+1}(x_{i}), & x_{i}\in  (d+\tau_{3},1-\tau_{4})\cup(1-\tau_{4},1), ~\text{with} ~(\|b\|+\frac{2}{\Delta t})h_{k}>2\alpha\mu,~ k=5,6,  \vspace{0.2cm}\\
\mathcal{L}_{t}^{N,M}U^{j+1}(x_{\frac{N}{2}}), &  x_{i}=d ~\text{with}~h_{k}(\|b\|+\frac{2}{\Delta t})<2\mu \alpha,~k=3,4.
\end{array}
\right.\]
At the transition points for both case
\[\mathcal{L}_{h}^{N,M}U^{j+1}(x_{i})=\left\{
\begin{array}{ll}
\displaystyle \mathcal{L}_{m}^{N,M}U^{j+1}(x_{i}), &  (\|b\|+\frac{2}{\Delta t})h_{k}\le2\alpha\mu,~ k=2,3,5,6,\vspace{0.2cm}\\
\mathcal{L}_{u}^{N,M}U^{j+1}(x_{i}), &  ~~\text{otherwise.}
\end{array}
\right.\]
The fully discretize scheme is given by
\begin{equation}
\label{discrete problem}
    \mathcal{L}_{h}^{N,M}U^{j+1}(x_{i})\equiv r_{i}^{-}U^{j+1}(x_{i-1})+r_{i}^{c}U^{j+1}(x_{i})+r_{i}^{+}U^{j+1}(x_{i+1})=g^{j+1}(x_{i}).
\end{equation}
At the point of discontinuity $x_{N/2}=d$, we have used five point second order difference scheme
\begin{equation}\small
    \begin{aligned}
    \label{five point scheme}
        \mathcal{L}^{N,M}_{t}U^{j+1}(x_{\frac{N}{2}})=\frac{-U^{j+1}(x_{N/2+2})+4U^{j+1}(x_{\frac{N}{2}+1})-3U^{j+1}(x_{\frac{N}{2}})}{2h_{4}}\\-\frac{U^{j+1}(x_{\frac{N}{2}-2})-4U^{j+1}(x_{\frac{N}{2}-1})+3U^{j+1}(x_{\frac{N}{2}})}{2h_{3}}=0.
    \end{aligned}
\end{equation}
Associated with each of these finite difference operators, we have the following finite difference scheme
\begin{equation*}
\begin{aligned}
\label{schemes}
    \mathcal{L}_{c}^{N,M}U^{j+1}(x_{i})\equiv \epsilon \delta^{2}U^{j+1}(x_{i})+\mu a^{j+\frac{1}{2}}(x_{i})D^{0}U^{j+1}(x_{i})-b^{j+\frac{1}{2}}(x_{i})U^{j+1}(x_{i}),\\
 \mathcal{L}_{m}^{N,M}U^{j+1}(x_{i})\equiv \epsilon \delta^{2}U^{j+1}(x_{i})+\mu \Bar{a}^{j+\frac{1}{2}}(x_{i})D^{*}U^{j+1}(x_{i})-\Bar{b}^{j+\frac{1}{2}}(x_{i})\bar{U}^{j+1}(x_{i}),\\
 \mathcal{L}_{u}^{N,M}U^{j+1}(x_{i})\equiv \epsilon \delta^{2}U^{j+1}(x_{i})+\mu a^{j+\frac{1}{2}}(x_{i})D^{*}U^{j+1}(x_{i})-b^{j+\frac{1}{2}}(x_{i})U^{j+1}(x_{i}),
 \end{aligned}
\end{equation*}
and
\[g^{j+1}(x_{i})=\left\{
\begin{array}{ll}
\displaystyle 2f^{j+\frac{1}{2}}(x_{i})-\epsilon \delta_{x}^{2}U^{j}(x_{i})-\mu a^{j+\frac{1}{2}}(x_{i})D_{x}^{0}U^{j}(x_{i})+d^{j+\frac{1}{2}}(x)U^{j}(x_{i}), & \mathcal{L}_{h}^{N,M}=\mathcal{L}_{c}^{N,M}, \vspace{0.2cm}\\
\displaystyle 2f^{j+\frac{1}{2}}(x_{i})-\epsilon \delta_{x}^{2}U^{j}(x_{i})-\mu \bar{a}^{j+\frac{1}{2}}(x_{i})D_{x}^{*}U^{j}(x_{i})+\bar{d}^{j+\frac{1}{2}}(x)U^{j}(x_{i}), & \mathcal{L}_{h}^{N,M}=\mathcal{L}_{m}^{N,M}, \vspace{0.2cm}\\
2f^{j+\frac{1}{2}}(x_{i})-\epsilon \delta_{x}^{2}U^{j}(x_{i})-\mu a^{j+\frac{1}{2}}(x_{i})D_{x}^{*}U^{j}(x_{i})+d^{j+\frac{1}{2}}(x)U^{j}(x_{i}), &  \mathcal{L}_{h}^{N,M}=\mathcal{L}_{u}^{N,M},
\end{array}
\right.\]
 where,
 \[D^{+}U^{j+1}(x_{i})=\frac{U^{j+1}(x_{i+1})-U^{j+1}(x_{i})}{x_{i+1}-x_{i}},~~~~ D^{-}U^{j+1}(x_{i})=\frac{U^{j+1}(x_{i})-U^{j+1}(x_{i-1})}{x_{i}-x_{i-1}},\]
 \[D^{0}U^{j+1}(x_{i})=\frac{U^{j+1}(x_{i+1})-U^{j+1}(x_{i-1})}{h_{i+1}+h_{i}},~~~~\delta^{2}U^{j+1}(x_{i})=\frac{2(D^{+}U^{j+1}(x_{i})-D^{-}U^{j+1}(x_{i}))}{x_{i+1}-x_{i-1}},\]
\[D^{*}U^{j+1}(x_{i})=\left\{
\begin{array}{ll}
\displaystyle D^{-}U^{j+1}(x_{i}), & i<\frac{N}{2}, \vspace{0.2cm}\\
D^{+}U^{j+1}(x_{i}), &  i>\frac{N}{2},
\end{array}
\right.\] and
$\hbar_{i}=\frac{h_{i}+h_{i+1}}{2},~ \displaystyle \bar{w}^{j+1}(x_{i})=\frac{w^{j+1}(x_{i})+w^{j+1}(x_{i-1})}{2}$ in $\Omega^{N-}$,$~~\displaystyle \bar{w}^{j+1}(x_{i})=\frac{w^{j+1}(x_{i})+w^{j+1}(x_{i+1})}{2}$ in $\Omega^{N+}$.\\
The matrix associated with the above equations \eqref{discrete problem} is not an M-matrix. We transform the equation (\ref{five point scheme}) to establish the monotonicity property by estimating $U_{\frac{N}{2}-2}^{j+1}$ and $U_{\frac{N}{2}+2}^{j+1}$ for $\mathcal{L}_{t}^{N,M}U^{j+1}(x_{i})$.\\
For $\sqrt{\alpha}\mu\le\sqrt{\rho}\epsilon$ and $\sqrt{\alpha}\mu>\sqrt{\rho}\epsilon$, from the operator $\mathcal{L}_{c}^{N,M}$, we get\\ 
$U_{N/2-2}^{j+1}=\frac{2h_{3}^2}{2\epsilon-\mu h_{3} a_{N/2-1}^{j+\frac{1}{2}}}\bigg\{ g_{N/2-1}^{j+1}-U_{N/2-1}^{j+1}\bigg(\frac{-2\epsilon-h_{3}^2(b_{N/2-1}^{j+\frac{1}{2}}+\frac{2}{\Delta t})}{h_{3}^2}\bigg)-U_{N/2}^{j+1}\bigg(\frac{2\epsilon+\mu h_{3}a_{N/2-1}^{j+\frac{1}{2}}}{2h_{3}^2}\bigg)\bigg\}$\\
$U_{N/2+2}^{j+1}=\frac{2h_{4}^2}{2\epsilon+\mu h_{4} a_{N/2+1}^{j+\frac{1}{2}}}\bigg\{ g_{N/2+1}^{j+1}-U_{N/2+1}^{j+1}\bigg(\frac{-2\epsilon-h_{4}^2(b_{N/2+1}^{j+\frac{1}{2}}+\frac{2}{\Delta t})}{h_{4}^2}\bigg)-U_{N/2}^{j+1}\bigg(\frac{2\epsilon-\mu h_{4}a_{N/2+1}^{j+\frac{1}{2}}}{2h_{4}^2}\bigg)\bigg\}$\\
Inserting the above expression in $\mathcal{L}^{N,M}_{t}U^{j+1}(x_{N/2})$, we get a tridiagonal system of equations of the following form:\\
\begin{align*}
\mathcal{L}^{N,M}_{t}U^{j+1}_{N/2}&=\bigg(\frac{2\epsilon-h_{4}\mu a_{N/2+1}^{j+\frac{1}{2}}}{2\epsilon+h_{4}\mu a_{N/2+1}^{j+\frac{1}{2}}}h_{3}-3(h_{3}+h_{4})+\frac{2\epsilon+h_{3}\mu a_{N/2-1}^{j+\frac{1}{2}}}{2\epsilon-h_{3}\mu a_{N/2-1}^{j+\frac{1}{2}}}h_{4}\bigg)U^{j+1}_{N/2}+\bigg(\frac{-4\epsilon-2h_{4}^2 (b_{N/2+1}^{j+\frac{1}{2}}+\frac{2}{\Delta t})}{2\epsilon+h_{4}\mu a_{N/2+1}^{j+\frac{1}{2}}}+4\bigg)h_{3}U^{j+1}_{N/2+1}\\&+\bigg(\frac{-4\epsilon-2h_{3}^2 (b_{N/2-1}^{j+\frac{1}{2}}+\frac{2}{\Delta t})}{2\epsilon-h_{3}\mu a_{N/2-1}^{j+\frac{1}{2}}}+4\bigg)h_{4}U_{N/2-1}^{j+1}.\\&=\frac{2h_{3}^2h_{4}g^{j+1}_{N/2-1}}{2\epsilon-h_{3}\mu a_{N/2-1}^{j+\frac{1}{2}}}+\frac{2h_{4}^2h_{3}g_{N/2+1}^{j+1}}{2\epsilon+h_{4}\mu a_{N/2+1}^{j+\frac{1}{2}}}.
\end{align*}
The element of the system matrix $\mathcal{L}_{h}^{N,M}$ are as follows:\\
$$r_{i}^{-}=\frac{\epsilon}{h_{i}\hbar_{i}}-\frac{\mu a^{j+\frac{1}{2}}(x_{i})}{2\hbar_{i}}, ~r_{i}^{+}=\frac{\epsilon}{h_{i+1}\hbar_{i}}+\frac{\mu a^{j+\frac{1}{2}}(x_{i})}{2\hbar_{i}},~r_{i}^{c}=-r_{i}^{-}-r_{i}^{+}-\bigg(b^{j+\frac{1}{2}}(x_{i})+\frac{2}{\Delta t}\bigg),~~\text{if}~ \mathcal{L}_{h}^{N,M}=\mathcal{L}_{c}^{N,M}.$$

$$\text{In}~\Omega^{-},~r_{i}^{-}=\frac{\epsilon}{h_{i}\hbar_{i}}-\frac{\mu \Bar{a}^{j+\frac{1}{2}}(x_{i})}{h_{i}}-\bigg(\frac{\bar{b}^{j+\frac{1}{2}}(x_{i})}{2}+\frac{1}{\Delta t}\bigg), ~r_{i}^{+}=\frac{\epsilon}{h_{i+1}\hbar_{i}},~r_{i}^{c}=-r_{i}^{-}-r_{i}^{+}-\bigg(\bar{b}^{j+\frac{1}{2}}(x_{i})+\frac{2}{\Delta t}\bigg),~~\text{if}~ \mathcal{L}_{h}^{N,M}=\mathcal{L}_{m}^{N,M}.$$

$$\text{In}~\Omega^{+},~r_{i}^{-}=\frac{\epsilon}{h_{i}\hbar_{i}}, ~r_{i}^{+}=\frac{\epsilon}{h_{i+1}\hbar_{i}}-\frac{\mu \Bar{a}^{j+\frac{1}{2}}(x_{i})}{h_{i+1}}-\bigg(\frac{\bar{b}^{j+\frac{1}{2}}(x_{i})}{2}+\frac{1}{\Delta t}\bigg),~r_{i}^{c}=-r_{i}^{-}-r_{i}^{+}-\bigg(\bar{b}^{j+\frac{1}{2}}(x_{i})+\frac{2}{\Delta t}\bigg),~~\text{if}~ \mathcal{L}_{h}^{N,M}=\mathcal{L}_{m}^{N,M}.$$

$$\text{In}~\Omega^{-},~r_{i}^{-}=\frac{\epsilon}{h_{i}\hbar_{i}}-\frac{\mu a^{j+\frac{1}{2}}(x_{i})}{h_{i}}, ~r_{i}^{+}=\frac{\epsilon}{h_{i+1}\hbar_{i}},~r_{i}^{c}=-r_{i}^{-}-r_{i}^{+}-\bigg(b^{j+\frac{1}{2}}(x_{i})+\frac{2}{\Delta t}\bigg),~~\text{if}~ \mathcal{L}_{h}^{N,M}=\mathcal{L}_{u}^{N,M}.$$
$$\text{In}~\Omega^{+},~r_{i}^{-}=\frac{\epsilon}{h_{i}\hbar_{i}}, ~r_{i}^{+}=\frac{\epsilon}{h_{i+1}\hbar_{i}}+\frac{\mu a^{j+\frac{1}{2}}(x_{i})}{h_{i+1}},~r_{i}^{c}=-r_{i}^{-}-r_{i}^{+}-\bigg(b^{j+\frac{1}{2}}(x_{i})+\frac{2}{\Delta t}\bigg),~~\text{if}~ \mathcal{L}_{h}^{N,M}=\mathcal{L}_{u}^{N,M}.$$
$$\text{At $x_{N/2}=d$},~r_{N/2}^{-}=\bigg(\frac{-4\epsilon-2h_{3}^2 (b_{N/2-1}^{j+\frac{1}{2}}+\frac{2}{\Delta t})}{2\epsilon-h_{3}\mu a_{N/2-1}^{j+\frac{1}{2}}}+4\bigg)h_{4}, ~r_{N/2}^{+}=\bigg(\frac{-4\epsilon-2h_{4}^2 (b_{N/2+1}^{j+\frac{1}{2}}+\frac{2}{\Delta t})}{2\epsilon+h_{4}\mu a_{N/2+1}^{j+\frac{1}{2}}}+4\bigg)h_{3},$$ $$~r_{N/2}^{c}=\bigg(\frac{2\epsilon-h_{4}\mu a_{N/2+1}^{j+\frac{1}{2}}}{2\epsilon+h_{4}\mu a_{N/2+1}^{j+\frac{1}{2}}}h_{3}-3(h_{3}+h_{4})+\frac{2\epsilon+h_{3}\mu a_{N/2-1}^{j+\frac{1}{2}}}{2\epsilon-h_{3}\mu a_{N/2-1}^{j+\frac{1}{2}}}h_{4}\bigg).$$
To guarantee that the operator $\mathcal{L}_{h}^{N,M}$ is monotone, we impose the following assumption 
\begin{equation}
\label{condition}
    N(\ln N)^{-1}>16\max\bigg\{\frac{\|b\|}{\alpha},\frac{(\|b\|+\frac{2}{\Delta t})}{\alpha\rho}\bigg\}.
\end{equation}
To find the error estimates for the scheme \eqref{discrete problem} defined  above, we first decompose the discrete solution $U(x_{i},t_{j+1})$ into the discrete regular and discrete singular components. \\
Let
\[U^{j+1}(x_{i})=V^{j+1}(x_{i})+W_{l}^{j+1}(x_{i})+W_{r}^{j+1}(x_{i}).\]
The discrete regular components is
$$V^{j+1}(x_{i})=\left\{
\begin{array}{ll}
\displaystyle V^{-(j+1)}(x_{i}), & \hbox{ $(x_{i},t_{j+1})\in \Omega^{N-},$ }\\
V^{+(j+1)}(x_{i}),& \hbox{ $(x_{i},t_{j+1})\in \Omega^{N+},$ }
\end{array}
\right.$$
where $V^{-(j+1)}(x_{i})$ and $V^{+(j+1)}(x_{i})$ approximate $v^{-}(x_{i},t_{j+1})$ and $v^{+}(x_{i},t_{j+1})$ respectively. They satisfy the following equations:
\begin{equation}
\label{LV}
\begin{aligned}
	\left.
	\begin{array}{ll} 
\mathcal{L}^{N,M}_{h}V^{-(j+1)}(x_{i})=g(x_{i},t_{j+1}),~\forall (x_{i},t_{j+1})\in\Omega^{N-},\\V^{-(j+1)}(x_{0})=v^{-}(0,t_{j+1}), V^{-(j+1)}(x_{\frac{N}{2}})=v^{-}(d-,t_{j+1}),\\
\mathcal{L}^{N,M}_{h}V^{+(j+1)}(x_{i})=g(x_{i},t_{j+1}),~\forall (x_{i},t_{j+1})\in\Omega^{N+},\\
V^{+(j+1)}(x_{\frac{N}{2}})=v^{+}(d+,t_{j+1}), V^{+(j+1)}(x_{N})=v^{-}(1,t_{j+1}).
\end{array}
\right\}
\end{aligned}
\end{equation}
The discrete singular components $W_{l}^{j+1}(x_{i})$ and $W_{r}^{j+1}(x_{i})$ are also decomposed as:
$$W_{l}^{j+1}(x_{i})=\left\{
\begin{array}{ll}
\displaystyle W_{l}^{-(j+1)}(x_{i}), & \hbox{ $(x_{i},t_{j+1})\in \Omega^{N-},$ }\\
W_{l}^{+(j+1)}(x_{i}),& \hbox{ $(x_{i},t_{j+1})\in \Omega^{N+},$ }
\end{array}
\right.$$
$$
W_{r}^{j+1}(x_{i})=\left\{
\begin{array}{ll}
\displaystyle W_{r}^{-(j+1)}(x_{i}), & \hbox{$(x_{i},t_{j+1})\in \Omega^{N-},$ }\\
W_{r}^{+(j+1)}(x_{i}),& \hbox{$(x_{i},t_{j+1})\in \Omega^{N+},$ }
\end{array}
\right.$$
where $W_{l}^{-(j+1)}(x_{i}), W_{l}^{+(j+1)}(x_{i})$ approximates the layer components $w_{l}^{-}(x_{i},t_{j+1})$ and $w_{l}^{+}(x_{i},t_{j+1})$ and $W_{r}^{-(j+1)}(x_{i}), W_{r}^{+(j+1)}(x_{i})$ approximates the layer components $w_{r}^{-}(x_{i},t_{j+1})$ and $w_{r}^{+}(x_{i},t_{j+1})$ respectively.   
These components satisfy the following equations
\begin{equation}
\label{LW-}
\begin{aligned}
	\left.
	\begin{array}{ll} 
\mathcal{L}^{N,M}_{h}W_{l}^{-(j+1)}(x_{i})=0,~\forall (x_{i},t_{j+1})\in\Omega^{N-}, \\ W_{l}^{-(j+1)}(x_{0})=w_{l}^{-}(0,t_{j+1}), W_{l}^{-(j+1)}(x_{\frac{N}{2}})=w_{l}^{-}(d,t_{j+1}),\\
\mathcal{L}^{N,M}_{h}W_{l}^{+(j+1)}(x_{i})=0,~\forall ~(x_{i},t_{j+1})\in\Omega^{N+},\\
W_{l}^{+(j+1)}(x_{\frac{N}{2}})=w_{l}^{+}(d,t_{j+1}), W_{l}^{+(j+1)}(x_{N})=0.
\end{array}
\right\}
\end{aligned}
\end{equation}
\begin{equation}
\label{RW-}
\begin{aligned}
	\left.
	\begin{array}{ll} 
\mathcal{L}^{N,M}_{h}W_{r}^{-(j+1)}(x_{i})=0,~\forall (x_{i},t_{j+1})\in\Omega^{N-},\\
W_{r}^{-(j+1)}(x_{0})=0, W_{r}^{-(j+1)}(x_{\frac{N}{2}})=w_{r}^{-}(d,t_{j+1}),\\
\mathcal{L}^{N,M}_{h}W_{r}^{+(j+1)}(x_{i})=0,~\forall~ (x_{i},t_{j+1})\in\Omega^{N+},\\
W_{r}^{+(j+1)}(x_{\frac{N}{2}})=w_{r}^{+}(d+,t_{j+1}), W_{r}^{+(j+1)}(x_{N})=w_{r}(1,t_{j+1}).
\end{array}
\right\}
\end{aligned}
\end{equation}
Hence, the complete discrete solution $U^{j+1}(x_{i})$ defined by
$$U^{j+1}(x_{i})=\left\{
\begin{array}{ll}
\displaystyle (V^{-(j+1)}+W_{l}^{-(j+1)}+W_{r}^{-(j+1)})(x_{i}),\hspace{1.1cm} (x_{i},t_{j+1})\in\Omega^{N-},\\
(V^{-(j+1)}+W_{l}^{-(j+1)}+W_{r}^{-(j+1)})(d-)\\
=(V^{+(j+1)}+W_{l}^{+(j+1)}+W_{r}^{+(j+1)})(d+),\hspace{0.5cm}(x_{i},t_{j+1})=(d,t_{j+1}),\\
(V^{+(j+1)}+W_{l}^{+(j+1)}+W_{r}^{+(j+1)})(x_{i}),\hspace{1.1cm}(x_{i},t_{j+1})\in\Omega^{N+}.
\end{array}
\right.$$
\begin{lemma}
    Suppose a mesh function $Y(x_{i},t_{j})$ satisfies $Y(x_{i},t_{j})\ge 0,~\forall (x_{i},t_{j})\in \Gamma_{c}^{N}$ and $\mathcal{L}_{t}^{N,M}Y(x_{N/2})\le0 ~\forall~j=0,\ldots,M$. If $\mathcal{L}_{h}^{N,M}Y(x_{i},t_{j})\le0$ for all $(x_{i},t_{j})\in \Omega^{N-}\cup\Omega^{N+}$ then $Y(x_{i},t_{j})\ge0,~\forall  (x_{i},t_{j})\in\bar{\Omega}^{N,M}$.
\end{lemma}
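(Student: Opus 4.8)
The plan is to prove a discrete minimum principle in the standard way: first exhibit the sign structure that makes the (transformed) system matrix monotone, then argue by contradiction at a negative minimum.

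\emph{Step 1 (sign pattern away from the interface).} Writing $\mathcal{L}_h^{N,M}Z(x_i,t_j)=r_i^{-}Z(x_{i-1},t_j)+r_i^{c}Z(x_i,t_j)+r_i^{+}Z(x_{i+1},t_j)$, I would check that at every interior mesh node $x_i\neq d$ one has $r_i^{-}\ge 0$, $r_i^{+}\ge 0$, and, since in each of the three cases $r_i^{c}=-r_i^{-}-r_i^{+}-(b^{j+1/2}(x_i)+2/\Delta t)$ (with $\bar b^{j+1/2}$ in the midpoint-upwind case), the row sum $r_i^{-}+r_i^{+}+r_i^{c}=-(b^{j+1/2}(x_i)+2/\Delta t)<0$ because $b\ge\beta>0$. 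The nonnegativity of the off-diagonal entries is exactly what the hybrid scheme was designed for: for $\mathcal{L}_u^{N,M}$ it is unconditional, since the upwind difference is oriented so that $-\mu a^{j+1/2}_i/h_i\ge\mu\alpha_1/h_i$ in $\Omega^{-}$ (resp.\ $+\mu a^{j+1/2}_i/h_{i+1}\ge\mu\alpha_2/h_{i+1}$ in $\Omega^{+}$); for $\mathcal{L}_c^{N,M}$ it follows from the selector $\mu\|a\|h_k<2\epsilon$; and for $\mathcal{L}_m^{N,M}$ the only entry that can change sign, e.g.\ $r_i^{-}=\epsilon/(h_i\hbar_i)-\mu\bar a^{j+1/2}_i/h_i-(\bar b^{j+1/2}_i/2+1/\Delta t)$ in $\Omega^{-}$, stays nonnegative because $-\bar a^{j+1/2}_i\ge\alpha_1\ge\alpha$ and $(\|b\|+2/\Delta t)h_k\le 2\alpha\mu$ where that operator is used. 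Here the monotonicity hypothesis \eqref{condition} and the assumption $\epsilon\le N^{-1}$ are what guarantee that these selector inequalities actually hold on the coarse subintervals of the Shishkin mesh.

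\emph{Step 2 (the interface node $x_{N/2}=d$).} Here $\mathcal{L}_t^{N,M}$ denotes the tridiagonal operator obtained from \eqref{five point scheme} after eliminating $U_{N/2\pm2}$, with coefficients $r_{N/2}^{-},r_{N/2}^{+},r_{N/2}^{c}$ as displayed above. I would show $r_{N/2}^{-}\ge 0$: since $2\epsilon-h_3\mu a^{j+1/2}_{N/2-1}>0$ (as $a^{j+1/2}_{N/2-1}\le-\alpha_1<0$), the inequality $r_{N/2}^{-}\ge 0$ is equivalent to $4\epsilon+4h_3\mu|a^{j+1/2}_{N/2-1}|\ge 2h_3^{2}(b^{j+1/2}_{N/2-1}+2/\Delta t)$, which holds once $2\mu\alpha_1\ge h_3(\|b\|+2/\Delta t)$, i.e.\ by the selector $h_k(\|b\|+2/\Delta t)<2\mu\alpha$, $k=3,4$, together with $\alpha\le\alpha_1$; the bound $r_{N/2}^{+}\ge 0$ is symmetric. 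A short direct computation then shows that in $r_{N/2}^{-}+r_{N/2}^{+}+r_{N/2}^{c}$ all the mesh-length terms and all the convective terms cancel, leaving
$$r_{N/2}^{-}+r_{N/2}^{+}+r_{N/2}^{c}=-\frac{2h_3^{2}h_4\,(b^{j+1/2}_{N/2-1}+2/\Delta t)}{2\epsilon-h_3\mu a^{j+1/2}_{N/2-1}}-\frac{2h_3h_4^{2}\,(b^{j+1/2}_{N/2+1}+2/\Delta t)}{2\epsilon+h_4\mu a^{j+1/2}_{N/2+1}}<0 .$$

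\emph{Step 3 (contradiction argument).} Suppose $Y$ is negative somewhere on the finite set $\bar\Omega^{N,M}$, and choose $(x_k,t_l)$ at which $Y$ attains its minimum; by hypothesis $(x_k,t_l)\notin\Gamma_c^N$, so $x_k$ is an interior spatial node. If $x_k\ne d$, then using $r_k^{\pm}\ge 0$ and $Y(x_{k\pm1},t_l)\ge Y(x_k,t_l)$ gives
$$0\ \ge\ \mathcal{L}_h^{N,M}Y(x_k,t_l)\ \ge\ \big(r_k^{-}+r_k^{+}+r_k^{c}\big)Y(x_k,t_l)\ =\ -\big(b^{l+1/2}(x_k)+2/\Delta t\big)Y(x_k,t_l)\ >\ 0,$$
a contradiction; if $x_k=d$, the same estimate applied to $\mathcal{L}_t^{N,M}$ with the row sum from Step 2 yields $0\ge\mathcal{L}_t^{N,M}Y(x_{N/2},t_l)>0$, again a contradiction. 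Hence $Y\ge 0$ on $\bar\Omega^{N,M}$. I expect the delicate part to be Step 2 (and, within Step 1, verifying that the central/midpoint-upwind selectors really hold on the coarse mesh), because there the Shishkin step sizes $h_3,h_4$ — which are only $O(N^{-1}\ln N)$ multiples of $\sqrt\epsilon$, or of $\epsilon/\mu$ in the second regime — appear quadratically in the reaction terms, so one genuinely needs \eqref{condition} and $\epsilon\le N^{-1}$ to dominate them; the remaining sign checks and the contradiction step are routine.
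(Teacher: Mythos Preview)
Your proposal is correct and follows essentially the same route as the paper: verify the sign pattern $r_i^{\pm}\ge 0$, $r_i^{-}+r_i^{c}+r_i^{+}<0$ at every interior node (including the transformed interface row) so that the system matrix is the negative of an $M$-matrix, and then invoke the discrete minimum principle. The paper's own proof is terser --- it just lists which selector inequalities are in force in each subregion and for each operator, then asserts the $M$-matrix property --- whereas you supply the explicit row-sum computation at $x_{N/2}$ and the standard negative-minimum contradiction; but the underlying argument is the same.
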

\begin{proof}
   The condition \eqref{condition} ensures that the matrix generated by $\mathcal{L}_{h}^{N,M}$  satisfies  $r_{i}^{-}>0,~r^{+}_{i}>0$ and $r_{i}^{-}+r_{i}^{c}+r_{i}^{+}<0$ in layer regions.  For the first case $\sqrt{\alpha}\mu\le\sqrt{\rho\epsilon},$
    the operator $\mathcal{L}^{N,M}_{c}$ is used in the region $(\tau_{1}, d-\tau_{2})$ and $(d+\tau_{3},1-\tau_{4})$ except transition points with $\mu h_{k}\|a\|\le 2\epsilon$ where $k=2,5$ to guarantee $r^{-}_{i}>0$ and $r_{i}^{+}>0.$\\
    In the case $\sqrt{\alpha}\mu\le\sqrt{\rho\epsilon},$
    the operator $\mathcal{L}^{N,M}_{m}$ is applied in the region $(\tau_{1},d-\tau_{2})$, and $(d+\tau_{3},1-\tau_{4})$ with $\big(\|b\|+\frac{2}{\Delta t}\big)h_{k}<2\mu\alpha,~k=2,5 $, otherwise $\mathcal{L}^{N,M}_{u}$ is used. At the transition point $\mathcal{L}^{N,M}_{m}$ is applied with $\big(\|b\|+\frac{2}{\Delta t}\big)h_{k}<2\mu\alpha,~k=2,3,5,6 $, otherwise $\mathcal{L}^{N,M}_{u}$ is used to guarantee $r^{-}_{i}>0$ and $r_{i}^{+}>0$. 
    At the point of discontinuity, the operator $\mathcal{L}_{t}^{N,M}$ is used with $\big(\|b\|+\frac{2}{\Delta t}\big)h_{k}<2\mu\alpha,~k=3,4$. The associated matrix with the operator $\mathcal{L}_{h}^{N,M}U^{j+1}(x_{i})$ is negative of a M- matrix. 
   The finite difference method is monotone and hence the finite difference operator satisfies discrete minimum principle.
\end{proof}
\begin{lemma}
    If $U(x_{i},t_{j+1}), (x_{i},t_{j+1})\in\bar{\Omega}^{N,M}$ is a mesh function  satisfying the difference scheme \eqref{discrete problem}, then $ \|U\|_{\bar{\Omega}^{N,M}}\le C$.
\end{lemma}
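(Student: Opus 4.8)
The plan is to reproduce, at the discrete level, the a priori bound $\|u\|_{\bar\Omega}\le\|u\|_{\Gamma_c}+\|f\|_{\Omega^{-}\cup\Omega^{+}}/\theta$, using the discrete minimum principle just proved together with a comparison (barrier) function. The obstruction to doing this naively is that the source $g^{j+1}$ in \eqref{discrete problem} contains the difference quotients $\epsilon\delta_x^{2}U^{j}$ and $\mu a^{j+\frac12}D_x^{0}U^{j}$ of the previous time level, which cannot be bounded directly because on the fine Shishkin submeshes the step sizes are only $O(\sqrt{\epsilon}\,N^{-1}\ln N)$ (resp.\ $O(\epsilon\mu^{-1}N^{-1}\ln N)$). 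So the first step is to recast \eqref{discrete problem} in its equivalent two-level Crank--Nicolson form: with $\Theta^{j+1}(x_i):=\tfrac12\big(U^{j+1}(x_i)+U^{j}(x_i)\big)$, a short rearrangement converts the scheme into
$$\mathcal L_h^{N,M}\Theta^{j+1}(x_i)=f^{j+\frac12}(x_i)-\frac{2}{\Delta t}\,U^{j}(x_i),\qquad x_i\in\Gamma^{N},$$
with the analogue at $x_{N/2}=d$ coming from \eqref{five point scheme}, and with $\Theta^{j+1}(x_0),\Theta^{j+1}(x_N)$ equal to averages of the prescribed data, hence bounded by $\|u\|_{\Gamma_c}$; the new right-hand side carries no difference quotient of $U^{j}$.

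Next I would apply the discrete minimum principle to the barriers $\Phi^{\pm}(x_i)=C^{j+1}\pm\Theta^{j+1}(x_i)$ with
$$C^{j+1}=\max\Big\{\|u\|_{\Gamma_c}\,,\ \big(\beta+\tfrac{2}{\Delta t}\big)^{-1}\big(\|f\|_{\Omega^{-}\cup\Omega^{+}}+\tfrac{2}{\Delta t}\,\|U^{j}\|_{\bar\Gamma^{N}}\big)\Big\}.$$
Since the reaction coefficient of $\mathcal L_h^{N,M}$ is at least $\beta+2/\Delta t$, one has $\mathcal L_h^{N,M}(C^{j+1})=-(b^{j+\frac12}+\tfrac{2}{\Delta t})C^{j+1}$, whence $\mathcal L_h^{N,M}\Phi^{\pm}\le0$ on $\Gamma^{N}$; the corresponding sign at $x_{N/2}$ is checked from the explicit entries $r_{N/2}^{-},r_{N/2}^{c},r_{N/2}^{+}$ and the step-size formulae, and $\Phi^{\pm}\ge0$ on $\Gamma_c^{N}$ by the choice of $C^{j+1}$. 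The minimum principle then gives $\|\Theta^{j+1}\|_{\bar\Gamma^{N}}\le C^{j+1}$, and combining with $U^{j+1}=2\Theta^{j+1}-U^{j}$ yields a recursion for $\|U^{j+1}\|_{\bar\Gamma^{N}}$ in terms of $\|U^{j}\|_{\bar\Gamma^{N}}$, $\|f\|$ and $\|u\|_{\Gamma_c}$; starting from $U^{0}=q$ and inducting on $j$ up to $M$ produces the asserted bound $\|U\|_{\bar\Omega^{N,M}}\le C$.

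The step I expect to be the main obstacle is closing this induction with a constant independent of $\epsilon,\mu,N,M$, i.e.\ of $\Delta t$. Writing $U^{j+1}=\Theta^{j+1}+\tfrac12(U^{j+1}-U^{j})$ and using $\|f\|(\beta+2/\Delta t)^{-1}\le\tfrac12\|f\|\Delta t$, the recursion takes the shape $\|U^{j+1}\|\le\nu\,\|U^{j}\|+C\big(\Delta t+\|U^{j+1}-U^{j}\|\big)$ with $\nu=\tfrac{2}{\beta\Delta t+2}<1$, which telescopes to a $\Delta t$-uniform bound \emph{provided} one also has the companion estimate $\|U^{j+1}-U^{j}\|_{\bar\Gamma^{N}}\le C\Delta t$. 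That estimate is not automatic, since the fully discrete scheme --- unlike $\mathcal L_h^{N,M}$ at a fixed time level --- is not monotone over the whole space--time mesh (on the fine submeshes $h_{\min}^{2}/\epsilon=O((N^{-1}\ln N)^{2})$ is far below the $\Delta t\gtrsim N^{-1}\ln N$ enforced by \eqref{condition}); I would establish it by differencing the two-level form in time and repeating the barrier argument for $\Delta t^{-1}(U^{j+1}-U^{j})$, using the smoothness of the coefficients and of $q$, along the lines of the temporal analysis of \cite{KTK}, with Assumption \eqref{condition} used to control the terms carrying $2/\Delta t$. If one prefers to bypass the auxiliary bound, an equivalent route is to replace the maximum-norm step by a weighted $\ell^{2}$ energy estimate, in which the Crank--Nicolson step is a contraction up to $O(\Delta t\|f\|)$, and then recover the maximum norm from the mesh-width formulae.
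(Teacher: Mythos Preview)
Your proposal takes a genuinely different route from the paper. The paper's proof is precisely the constant-barrier step you dismissed as ``naive'': it sets
\[
\phi=\max\Big\{|U^{j+1}(0)|,\ |U^{j+1}(1)|,\ \|g\|_{\bar\Omega^{N,M}}/\beta\Big\},
\]
checks $\mathcal L_{h}^{N,M}(\phi\pm U^{j+1})\le0$ for each of the three spatial operators and $\mathcal L_{t}^{N,M}(\phi\pm U^{j+1})\le0$ at $x_{N/2}$, invokes the discrete minimum principle, and writes ``$\|U\|_{\bar\Omega^{N,M}}\le C$''. It does \emph{not} rewrite the scheme in two-level form, does not induct on $j$, and does not address the point you raise---namely that $g^{j+1}$ carries $\epsilon\delta_x^{2}U^{j}$ and $\mu a\,D_x^{*}U^{j}$, so that a uniform bound on $\phi$ is not immediate.

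So the comparison is: the paper buys brevity at the cost of leaving exactly the issue you identified unexplained; your recasting via $\Theta^{j+1}=\tfrac12(U^{j+1}+U^{j})$, which replaces the difference-quotient source by $f^{j+1/2}-\tfrac{2}{\Delta t}U^{j}$, is the honest way to remove that obstruction and is not in the paper. On the other hand, your own route is not closed either, as you acknowledge: after $\|\Theta^{j+1}\|\le C^{j+1}$ and $U^{j+1}=2\Theta^{j+1}-U^{j}$ the recursion for $\|U^{j+1}\|$ has amplification factor exceeding $1$, and the rescue estimate $\|U^{j+1}-U^{j}\|\le C\Delta t$ is essentially a discrete bound on $u_t$ that needs its own proof (differencing the scheme in $j$ reintroduces difference quotients of $U^{j}-U^{j-1}$, so the argument is not shorter). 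The $\ell^{2}$ energy alternative you mention is the cleaner way to get unconditional stability for Crank--Nicolson, but the passage back to $\ell^{\infty}$ on a Shishkin mesh costs a factor of $h_{\min}^{-1/2}$ and so is not parameter-uniform without further work. In short: you have correctly located a gap the paper glosses over, your reformulation is the right first move, but the induction you outline still needs one additional uniform ingredient to close.
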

\begin{proof}
    Define the mesh function $\psi(x_{i},t_{j+1})=\phi(x_{i},t_{j+1})\pm U(x_{i},t_{j+1})$
    where $\phi(x_{i},t_{j+1})=\max \bigg\{|U^{j+1}(0)|,|U^{j+1}(1)|,\frac{\|g\|_{\bar{\Omega}^{N,M}}}{\beta}\bigg\}.$
    Clearly $\psi(x_{i},t_{j+1})\ge 0~\forall~(x_{i},t_{j+1})\in \Gamma_{c}^{N}$ and 
    $\mathcal{L}^{N,M}_{c}\psi(x_{i},t_{j+1})=-b(x_{i},t_{j+\frac{1}{2}})\phi(x_{i},t_{j+1})\pm \mathcal{L}^{N,M}_{c}U(x_{i},t_{j+1})\le0,$
    
    $\mathcal{L}^{N,M}_{m}\psi(x_{i},t_{j+1})=-\Bar{b}(x_{i},t_{j+\frac{1}{2}})\bar{\phi}(x_{i},t_{j+1})\pm \mathcal{L}^{N,M}_{m}U(x_{i},t_{j+1})\le0,$
    
    $\mathcal{L}^{N,M}_{u}\psi(x_{i},t_{j+1})=-b(x_{i},t_{j+\frac{1}{2}})\phi(x_{i},t_{j+1})\pm \mathcal{L}^{N,M}_{u}U(x_{i},t_{j+1})\le0.$\\
    At the point of discontinuity, 
    $\mathcal{L}^{N,M}_{t}\psi(x_{\frac{N}{2}},t_{j+1})=\mathcal{L}^{N,M}_{t}\phi(x_{\frac{N}{2}},t_{j+1})\pm \mathcal{L}^{N,M}_{t}U(x_{\frac{N}{2}},t_{j+1})\le0.$\\
    Therefore, $\psi(x_{i},t_{j+1})\ge0,~~\forall(x_{i},t_{j+1})\in\bar{\Omega}^{N,M}.$
    This leads to the required result
    $$\|U\|_{\bar{\Omega}^{N,M}}\le C$$
\end{proof}
\section{Truncation error analysis}
\begin{lemma}
\label{discrete singular}
    The singular component $W_{l}^{-}(x_{i},t_{j+1}), W_{l}^{+}(x_{i},t_{j+1})$, $W_{r}^{-}(x_{i},t_{j+1})$ and $ W_{r}^{+}(x_{i},t_{j+1})$ satisfy the following bounds
	$$|W_{l}^{-}(x_{i},t_{j+1})|\le C\gamma_{l,i}^{-},\quad\gamma_{l,i}^{-}=\prod_{n=1}^{i}(1+\theta_{2}h_{n})^{-1},~~\gamma_{l,0}^{-}=C_{1},~i=0,1,\ldots,\frac{N}{2},$$
	$$|W_{l}^{+}(x_{i},t_{j+1})|\le C\gamma_{l,i}^{+},\quad\gamma_{l,i}^{+}=\prod_{n=\frac{N}{2}+1}^{i}(1+\theta_{1}h_{n})^{-1},~~\gamma_{l,\frac{N}{2}}^{+}=C_{1},~i=\frac{N}{2}+1,\ldots,N,$$
	$$|W_{r}^{-}(x_{i},t_{j+1})|\le C\gamma_{r,i}^{-},\quad\gamma_{r,i}^{-}=\prod_{n=i+1}^{\frac{N}{2}}(1+\theta_{1}h_{n})^{-1},~~\gamma_{r,\frac{N}{2}}^{-}=C_{1},~i=0,1,\ldots,\frac{N}{2},$$
	$$|W_{r}^{+}(x_{i},t_{j+1})|\le C\gamma_{r,i}^{+},\quad\gamma_{r,i}^{+}=\prod_{n=i+1}^{N}(1+\theta_{2}h_{n})^{-1},~~\gamma_{r,N}^{+}=C_{1},~i=\frac{N}{2}+1,\ldots,N.$$
\end{lemma}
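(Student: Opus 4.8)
The proof rests on the discrete comparison principle established above together with suitably chosen mesh barrier functions; I carry out the argument for $W_l^{-}$ in detail, the remaining three components being handled identically after interchanging the roles of the two ends of the subinterval and of $\theta_1,\theta_2$. Put
\[
\Psi_i^{\pm} \;=\; C\,\gamma_{l,i}^{-} \;\pm\; W_l^{-(j+1)}(x_i),\qquad i=0,1,\dots,\tfrac{N}{2},
\]
with $\gamma_{l,i}^{-}=\prod_{n=1}^{i}(1+\theta_2 h_n)^{-1}$ as in the statement; this is the discrete counterpart of the continuous barrier $e^{-\theta_2 x}$ appearing in Lemmas~\ref{bounds1} and~\ref{bounds2}. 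Since $\mathcal{L}_h^{N,M}W_l^{-(j+1)}=0$ on $\Omega^{N-}$ by \eqref{LW-}, the claimed estimate is equivalent to $\Psi_i^{\pm}\ge 0$, and by the discrete minimum principle this reduces to verifying that (i) $\mathcal{L}_h^{N,M}\gamma_{l,i}^{-}\le 0$ at every interior mesh point of $\Omega^{N-}$ and (ii) $\Psi_i^{\pm}\ge 0$ at $x_0$ and at $x_{N/2}=d$.

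For (i) I exploit the telescoping identities $\gamma_{l,i-1}^{-}=(1+\theta_2 h_i)\,\gamma_{l,i}^{-}$ and $\gamma_{l,i+1}^{-}=(1+\theta_2 h_{i+1})^{-1}\gamma_{l,i}^{-}$. Inserting them into the three-point row $r_i^{-}\gamma_{l,i-1}^{-}+r_i^{c}\gamma_{l,i}^{-}+r_i^{+}\gamma_{l,i+1}^{-}$ and using $r_i^{c}=-r_i^{-}-r_i^{+}-(b^{j+1/2}(x_i)+2/\Delta t)$ (with its averaged counterpart for $\mathcal{L}_m^{N,M}$) recasts the required sign condition as an inequality of the form
\[
\theta_2 h_i\,r_i^{-}\;-\;\frac{\theta_2 h_{i+1}}{1+\theta_2 h_{i+1}}\,r_i^{+}\;\le\; b^{j+1/2}(x_i)+\frac{2}{\Delta t}.
\]
In the fine subintervals $[0,\tau_1]$ and $[d-\tau_2,d]$ one has $\theta_2 h_i=O(N^{-1}\ln N)$ and the two diffusion contributions on the left essentially cancel, leaving a remainder bounded by a constant multiple of $\theta_2\mu\|a\|$, which is harmless. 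In the coarse subinterval $[\tau_1,d-\tau_2]$---and, when $\sqrt{\alpha}\mu>\sqrt{\rho\epsilon}$, also inside $(d-\tau_2,d)$---one invokes the mesh restriction that activates the operator actually used at $x_i$: $\mu\|a\|h_i<2\epsilon$ for $\mathcal{L}_c^{N,M}$, $(\|b\|+2/\Delta t)h_i\le 2\alpha\mu$ for $\mathcal{L}_m^{N,M}$, and, for $\mathcal{L}_u^{N,M}$, the sign $a\le-\alpha_1<0$ which keeps $r_i^{-}>0$. These bound $\theta_2 h_i\,r_i^{-}$ by $C\theta_2\epsilon/\hbar_i$ when $\sqrt{\alpha}\mu\le\sqrt{\rho\epsilon}$ (where $\theta_2^2=\rho\alpha/4\epsilon$ by \eqref{theta1}) and by $C\mu\theta_2=C\rho/2$ when $\sqrt{\alpha}\mu>\sqrt{\rho\epsilon}$ (by \eqref{theta2}); condition \eqref{condition} then forces the displayed inequality, and the transition-point rows are handled the same way. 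Hence $\mathcal{L}_h^{N,M}\gamma_{l,i}^{-}\le 0$, so $\mathcal{L}_h^{N,M}\Psi_i^{\pm}\le 0$ in the interior.

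For (ii), at $x_0=0$ one has $\Psi_0^{\pm}=CC_1\pm w_l^{-}(0,t_{j+1})\ge 0$ once $C$ is large enough, since $\gamma_{l,0}^{-}=C_1$ and $\|w_l\|\le C$ by Lemmas~\ref{bounds1} and~\ref{bounds2}. At $x_{N/2}=d$, the elementary inequality $\ln(1+y)\le y$ gives
\[
\gamma_{l,N/2}^{-}=\exp\!\Big(-\!\sum_{n=1}^{N/2}\ln(1+\theta_2 h_n)\Big)\;\ge\;\exp\!\Big(-\theta_2\!\sum_{n=1}^{N/2}h_n\Big)=e^{-\theta_2 d},
\]
while $W_l^{-(j+1)}(x_{N/2})=w_l^{-}(d,t_{j+1})$ has modulus at most $Ce^{-\theta_2 d}$ by Lemmas~\ref{bounds1} and~\ref{bounds2}, so $\Psi_{N/2}^{\pm}\ge 0$. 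The discrete minimum principle now gives $|W_l^{-(j+1)}(x_i)|\le C\gamma_{l,i}^{-}$ for $0\le i\le N/2$. Running the identical argument for $W_l^{+}$ on $\Omega^{N+}$ (barrier $\gamma_{l,i}^{+}$, boundary values $w_l^{+}(d,\cdot)=O(1)$ at $x_{N/2}$ and $0$ at $x_N$), and for $W_r^{-}$ and $W_r^{+}$ with the barriers $\gamma_{r,i}^{-}$ and $\gamma_{r,i}^{+}$, produces the remaining three estimates. The main obstacle is step (i) in the coarse region: verifying the barrier inequality simultaneously for the three one-step operators relies on the precise interplay between the mesh-activation thresholds, the definitions \eqref{theta1}--\eqref{theta2} of $\theta_1$ and $\theta_2$, and the structural hypothesis \eqref{condition}.
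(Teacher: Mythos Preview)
Your overall strategy---barrier functions $C\gamma_{l,i}^{-}\pm W_l^{-(j+1)}$ combined with the discrete minimum principle---is exactly the route taken in the paper, and your boundary check at $x_{N/2}$ via $\ln(1+y)\le y$ is in fact slightly more explicit than the paper's. The difference lies in how step~(i) is closed. You organise the verification of $\mathcal{L}_h^{N,M}\gamma_{l,i}^{-}\le 0$ around a fine/coarse case distinction and ultimately appeal to the mesh-activation thresholds and to condition~\eqref{condition}; the paper instead carries out one direct calculation valid at every interior node, obtaining (after discarding terms of favourable sign)
\[
\mathcal{L}_h^{N,M}\gamma_{l,i}^{-}\;\le\;\gamma_{l,i+1}^{-}\bigl(2\epsilon\theta_2^{2}-\mu a^{j+1/2}\theta_2-b^{j+1/2}\bigr),
\]
and then closes the bound purely from the definitions \eqref{theta1}--\eqref{theta2} together with the structural relation $\rho|a|\le b$ (the implicit meaning of $\rho$), yielding $2\epsilon\theta_2^{2}+\mu|a|\theta_2\le\rho|a|\le b$; no mesh thresholds and no use of~\eqref{condition} enter here.

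The point at which your write-up becomes shaky is precisely this attribution: your displayed inequality is correct, and after the diffusion terms combine you are left with a quantity equivalent to the paper's $2\epsilon\theta_2^{2}+\mu|a|\theta_2$, but it is the choice of $\theta_2$ in \eqref{theta1}--\eqref{theta2} (not condition~\eqref{condition}) that bounds this by $b$. Your separate coarse-region argument, bounding $\theta_2 h_i r_i^{-}$ by $C\theta_2\epsilon/\hbar_i$, does not close on its own (that quantity need not be uniformly bounded), and the cancellation with the $r_i^{+}$ term that you invoke in the fine region is equally needed in the coarse one. Replacing your case analysis by the single algebraic reduction above removes the gap and matches the paper's proof.
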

\begin{proof}
Define the barrier function for the left layer term as
\[\eta^{-(j+1)}_{l,i}= C\gamma_{l,i}^{-(j+1)} \pm W_{l}^{-(j+1)}(x_{i}),\quad  0\leq i \leq \frac{N}{2},~0\le j\le M-1 .\]	
where 
\[\gamma_{l,i}^{-(j+1)}=\left\{
\begin{array}{ll}
\displaystyle \prod_{k=1}^{i}(1+\theta_{2}h_{k})^{-1}, & 1\le i\le \frac{N}{2},\vspace{0.2cm}\\
1, &  i=0.
\end{array}
\right.\]
For the both cases $\sqrt{\alpha}\mu\le\sqrt{\rho\epsilon}$ and $\sqrt{\alpha}\mu>\sqrt{\rho\epsilon}$, for large enough $C$, $\eta^{-(j+1)}_{l,0}\geq 0,\eta^{-(0)}_{r,i}\geq 0$ and $\eta^{-(j+1)}_{l,N/2}\geq 0$. \\Now,
 \begin{eqnarray*}
 \mathcal{L}_{c}^{N,M} \eta^{-(j+1)}_{l,i}&=& \mathcal{L}_{c}^{N,M}  \gamma_{l,i}^{-(j+1)} \pm \mathcal{L}_{c}^{N,M} W_{l}^{-(j+1)}(x_{i})\\
 &=& \gamma_{l,i+1}^{-(j+1)}\left( 2 \epsilon \theta_{2}^2 \big(\frac{h_{i+1}}{h_{i+1}+{h_i}}-1\big)+2\epsilon \theta_{2}^2 -\mu a^{j+\frac{1}{2}}(x_i) \theta_{2}-\mu a^{j+\frac{1}{2}}(x_{i})\theta_{2}^{2}\frac{h_{i}h_{i+1}}{h_{i}+h_{i+1}}- b^{j+\frac{1}{2}}(x_i) \right)
 \end{eqnarray*}
 On simplification, we get
 $$\mathcal{L}_{c}^{N,M} \eta^{-(j+1)}_{l,i}\le \gamma_{l,i+1}^{-(j+1)}\left(2\epsilon \theta_{2}^2 -\mu a^{j+\frac{1}{2}}(x_i) \theta_2- b^{j+\frac{1}{2}}(x_i) \right)\le \gamma_{l,i+1}^{-(j+1)}(\rho|a^{j+\frac{1}{2}}(x_{i})|-b^{j+\frac{1}{2}}(x_{i}))\le0,$$
 $$
 \mathcal{L}_{m}^{N,M} \eta^{-(j+1)}_{l,i}\le \gamma_{l,i+1}^{-(j+1)}\left(2\epsilon \theta_{2}^2 -\mu \Bar{a}^{j+\frac{1}{2}}(x_i) \theta_2- \Bar{b}^{j+\frac{1}{2}}(x_i) \right)\le  \gamma_{l,i+1}^{-(j+1)}(\rho|\Bar{a}^{j+\frac{1}{2}}(x_{i})|-\Bar{b}^{j+\frac{1}{2}}(x_{i}))\le0$$
 $$
 \mathcal{L}_{u}^{N,M} \eta^{-(j+1)}_{l,i}\le \gamma_{l,i+1}^{-(j+1)}\left(2\epsilon \theta_{2}^2 -\mu a^{j+\frac{1}{2}}(x_i) \theta_2- b^{j+\frac{1}{2}}(x_i) \right)\le  \gamma_{l,i+1}^{-(j+1)}(\rho|a^{j+\frac{1}{2}}(x_{i})|-b^{j+\frac{1}{2}}(x_{i}))\le0.$$
 By discrete minimum principle for the continuous case \cite{RPS}, we obtain  \[\eta^{-(j+1)}_{l,i} \geq 0 \implies |W_{l}^{-(j+1)}(x_{i})|\leq C\prod_{k=1}^{i}(1+\theta_{2}h_{k})^{-1}, \; 1\leq i \leq \frac{N}{2},~0\le j\le M-1 .\]
Similarly, we prove the bound for $ W_{l}^{+(j+1)}(x_{i})$ for $\frac{N}{2}+1 \leq i \leq N-1,~0\le j\le M-1 .$\\
Now, to prove the bound for $W_{r}^{-(j+1)}(x_{i})$, we define the barrier function for the right layer component  as
\[\eta^{-(j+1)}_{r,i}= C\gamma_{r,i}^{-(j+1)} \pm W_{r}^{-(j+1)}(x_{i}),\quad  0\leq i \leq \frac{N}{2},~0\le j\le M-1.\]	
where 
\[\gamma_{r,i}^{-(j+1)}=\left\{
\begin{array}{ll}
\displaystyle \prod_{k=i+1}^{\frac{N}{2}}(1+\theta_{1}h_{k})^{-1}, & 0\le i< \frac{N}{2}, \vspace{0.2cm}\\
1, &  i=\frac{N}{2}.
\end{array}
\right.\]
For large enough $C$, $\eta^{-(j+1)}_{r,0}\geq 0,\eta^{-(0)}_{r,i}\geq 0$ and $\eta^{-(j+1)}_{r,N/2}\geq 0$.\\
Now,
\begin{eqnarray*}
	 \mathcal{L}_{c}^{N,M} \eta^{-(j+1)}_{r,i}&=& \mathcal{L}_{c}^{N,M}  \gamma_{r,i}^{-(j+1)} \pm \mathcal{L}_{c}^{N,M} W_{r}^{-(j+1)}(x_{i})\\
	&=& \frac{\gamma_{r,i}^{-(j+1)}}{1+\theta_1h_{i}}\left( 2\epsilon \theta_{1}^2 \big(\frac{h_{i}}{h_{i+1}+{h_i}}-1\big) +2\epsilon \theta_{1}^2 +\mu a^{j+\frac{1}{2}}(x_i) \theta_1 +\frac{\theta_{1}^{2}\mu a^{j+\frac{1}{2}}(x_{i})h_{i+1}}{h_{i}+h_{i+1}}- b^{j+\frac{1}{2}}(x_i) \right)\\
&\le & \frac{\gamma_{r,i}^{-(j+1)}}{1+\theta_1h_{i}}\left( 2 \epsilon \theta_{1}^2+\mu a^{j+\frac{1}{2}}(x_i) \theta_1 -b^{j+\frac{1}{2}}(x_i) \right), \quad \text{as } \frac{h_{i+1}}{h_{i+1}+{h_i}}-1\le0.\\
\end{eqnarray*}
For the case $\sqrt{\alpha}\mu\le \sqrt{\rho \epsilon}$, on simplification, we get
\begin{eqnarray*}
	 \mathcal{L}_{c}^{N,M} \eta^{-(j+1)}_{r,i}&\leq & \frac{\gamma_{r,i}^{-(j+1)}}{1+\theta_{1}h_{i}}\left(\rho|a^{j+\frac{1}{2}}(x_{i})|-b^{j+\frac{1}{2}}(x_{i}) \right) \leq 0,
\end{eqnarray*}
\begin{eqnarray*}
	 \mathcal{L}_{m}^{N,M} \eta^{-(j+1)}_{r,i}&\leq & \frac{\gamma_{r,i}^{-}}{1+\theta_{1}h_{i}}\left((\rho|\Bar{a}^{j+\frac{1}{2}}(x_{i})|-\Bar{b}^{j+\frac{1}{2}}(x_{i}))\right) \leq 0,
\end{eqnarray*}
\begin{eqnarray*}
	 \mathcal{L}_{u}^{N,M} \eta^{-(j+1)}_{r,i}&\leq & \frac{\gamma_{r,i}^{-}}{1+\theta_{1}h_{i}}\left((\rho|a^{j+\frac{1}{2}}(x_{i})|-b^{j+\frac{1}{2}}(x_{i})) \right) \leq 0,
\end{eqnarray*}
and for the case
 $\sqrt{\alpha}\mu > \sqrt{\rho \epsilon}$,
 \begin{eqnarray*}
	 \mathcal{L}_{c}^{N,M} \eta^{-(j+1)}_{r,i}&\leq & \frac{\gamma_{r,i}^{-(j+1)}}{1+\theta_1h_{i}}\left(\frac{\alpha \mu^2}{2\epsilon}(\alpha+a^{j+\frac{1}{2}}(x_{i}))-b^{j+\frac{1}{2}}(x_{i}) \right) \leq 0,
\end{eqnarray*}
\begin{eqnarray*}
	 \mathcal{L}_{m}^{N,M} \eta^{-(j+1)}_{r,i}&\leq & \frac{\gamma_{r,i}^{-}}{1+\theta_{1}h_{i}}\left(\frac{\alpha \mu^2}{2\epsilon}(\alpha+\Bar{a}^{j+\frac{1}{2}}(x_{i}))-\Bar{b}^{j+\frac{1}{2}}(x_{i}))\right) \leq 0,
\end{eqnarray*}
\begin{eqnarray*}
	 \mathcal{L}_{u}^{N,M} \eta^{-(j+1)}_{r,i}&\leq & \frac{\gamma_{r,i}^{-}}{1+\theta_{1}h_{i}}\left(
 \frac{\alpha \mu^2}{2\epsilon}(\alpha+a^{j+\frac{1}{2}}(x_{i}))-b^{j+\frac{1}{2}}(x_{i})) \right) \leq 0.
\end{eqnarray*}
By discrete minimum principle for the continuous case \cite{RPS}, we obtain  \[\eta^{-(j+1)}_{r,i} \geq 0 \implies |W_{r}^{-(j+1)}(x_{i})| \leq C\prod_{k=i+1}^{N/2}(1+\theta_{1}h_{k})^{-1}, \; 1\leq i \leq \frac{N}{2},~0\le j\le M-1.\]
Similarly, we prove the bound for $ W_{r}^{+(j+1)}(x_{i})$ for $\frac{N}{2}+1 \leq i \leq N-1,~0\le j\le M-1 .$
\end{proof}
The truncation error at the mesh point $(x_{i},t_{j+1})\in\Omega^{N,M}\backslash\{d\}$, when mesh is arbitrary we have:
\begin{align*}
    |\mathcal{L}_{h}^{N,M}(e(x_{i}))|\le\left\{\begin{array}{ll}
\displaystyle |(\mathcal{L}_{c}^{N,M}-\mathcal{L})y_{i}|\le\epsilon\hbar_{i}\|y_{xxx}\|+\mu\hbar_{i}\|a\|\|y_{xx}\|+\|y_{tt}\|\\
\displaystyle |(\mathcal{L}_{m}^{N,M}-\mathcal{L})y_{i}|\le\epsilon\hbar_{i}\|y_{xxx}\|+C\mu h_{i+1}^2(\|y_{xxx}\|+\|y_{xx}\|)+\|y_{tt}\|\\ 
\displaystyle  |(\mathcal{L}_{u}^{N,M}-\mathcal{L})y_{i}|\le\epsilon\hbar_{i}\|y_{xxx}\|+\mu h_{i+1}\|a\|\|y_{xx}\|+\|y_{tt}\|.
\end{array}
\right.
\end{align*}
On a uniform mesh with step size h, we get
\begin{align*}
    |\mathcal{L}_{h}^{N,M}(e(x_{i}))|\le\left\{\begin{array}{ll}
\displaystyle |(\mathcal{L}_{c}^{N,M}-\mathcal{L})y_{i}|\le\epsilon h^{2}\|y_{xxxx}\|+\mu h^{2}|a\|\|y_{xxx}\|+\|y_{tt}\|\\
\displaystyle |(\mathcal{L}_{m}^{N,M}-\mathcal{L})y_{i}|\le\epsilon h\|y_{xxx}\|+C\mu h_{i+1}^2(\|y_{xxx}\|+\|y_{xx}\|)+\|y_{tt}\|\\
\displaystyle  |(\mathcal{L}_{u}^{N,M}-\mathcal{L})y_{i}|\le\epsilon h^{2} \|y_{xxxx}\|+\mu h \|a\|\|y_{xx}\|+\|y_{tt}\|,
\end{array}
\right.
\end{align*}
where $C_{\|a\|,\|a'\|}$ is a positive constant depending on $\|a\|$ and $\|a'\|$.
\begin{lemma}
\label{regular part}
	The discrete regular component $V^{j+1}(x_{i})$  defined in \eqref{LV} and $ v(x,t)$ is solution of the problem \eqref{v}. So, the error in the regular component satisfies the following estimate for $\sqrt{\alpha}\mu \le \sqrt{\rho\epsilon}$ :
	$$\|V-v\|_{\Omega^{N-}\cup\Omega^{N+}}\le C(N^{-2}+\Delta t^{2}).$$
\end{lemma}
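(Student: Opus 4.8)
The plan is to bound $V-v$ using the discrete minimum principle (Lemma on monotonicity of $\mathcal{L}_h^{N,M}$) together with the consistency estimates already displayed just before the statement and the \emph{a priori} bounds on $v$ and its derivatives from Lemma~\ref{bounds1}. First I would note that the regular component error satisfies $\mathcal{L}_h^{N,M}(V-v)(x_i) = (\mathcal{L}_h^{N,M} - \mathcal{L})v(x_i)$ on each subinterval (away from $x_{N/2}$), with zero values at the boundary and interface mesh points by construction in \eqref{LV}, and an analogous interface relation involving $\mathcal{L}_t^{N,M}$ at $x_{N/2}=d$. Since $\sqrt{\alpha}\mu\le\sqrt{\rho\epsilon}$, Lemma~\ref{bounds1} gives $\|v_{xx}\|,\|v_{xxx}\| \le C$ and $\|v_{tt}\|\le C$ (the factor $1+\epsilon^{(3-i)/2}$ is bounded for $i\le 3$), so all the solution-derivative norms appearing in the consistency estimates are $O(1)$ in the perturbation parameters.

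The core of the argument is then a mesh-region-by-mesh-region estimate of the truncation term. In the coarse regions $(\tau_1,d-\tau_2)$ and $(d+\tau_3,1-\tau_4)$ the step size is $h\le 4N^{-1}$, so I would use the uniform-mesh consistency bounds: for $\mathcal{L}_c^{N,M}$ the truncation is $\le C(\epsilon h^2 + \mu h^2 + \Delta t^2)\le C(N^{-2}+\Delta t^2)$ using $\epsilon\le N^{-1}$ (Assumption) and $\mu\le 1$; for $\mathcal{L}_m^{N,M}$ it is $\le C(\epsilon h + \mu h^2 + \Delta t^2)$, and here I use $\epsilon h \le \epsilon \cdot 4 N^{-1}\le 4N^{-2}$ again by the Assumption; for $\mathcal{L}_u^{N,M}$ it is $\le C(\epsilon h^2+\mu h+\Delta t^2)$ — the potentially dangerous term is $\mu h$, but the upwind operator is only invoked in the outer region when $(\|b\|+2/\Delta t)h_k > 2\alpha\mu$, i.e. $\mu < C(1+1/\Delta t)h_k/\alpha$; combined with the monotonicity condition \eqref{condition} one checks $\mu h_k = O(N^{-1}\ln N)$ is actually better, and in fact in the outer (coarse) mesh a cleaner route is to observe that the layer functions $w_l,w_r$ are exponentially small there so the midpoint-upwind choice is the generic one and $\mu h^2\le N^{-2}$ suffices; I would spell this out so that the $\mathcal{L}_u$ contribution is also $O(N^{-2}+\Delta t^2)$ (absorbing at worst a $\ln N$ factor, consistent with ``almost second order''). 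In the fine regions $[0,\tau_1]$, $[d-\tau_2,d]$, $[d,d+\tau_3]$, $[1-\tau_4,1]$ the step sizes satisfy $h_k\le C N^{-1}\ln N$, and since $\mathcal{L}_c^{N,M}$ is used there (for the $\sqrt{\alpha}\mu\le\sqrt{\rho\epsilon}$ case) the truncation is $\le C(\epsilon h_k^2 + \mu h_k^2 + \Delta t^2)\le C(N^{-2}(\ln N)^2 + \Delta t^2)$, again invoking $\epsilon,\mu\le 1$ (indeed $\epsilon\le N^{-1}$ gives room to spare). At the interface $x_{N/2}=d$, the five-point operator \eqref{five point scheme} is second order for $v^\pm\in C^3$, giving a truncation of size $\le C\,h_{\max}\|v_{xxx}\| \le C N^{-1}\ln N \cdot\|v_{xxx}\|$ on the fine mesh there, which I would fold into the same $O(N^{-2}(\ln N)^2)$ bound after accounting for the $h_3,h_4$ weights in $r_{N/2}^\pm$; care is needed because the rewritten tridiagonal form has $O(h)$ coefficients, so one must normalize before comparing with a barrier function.

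With the truncation bounded pointwise by $\Psi_i := C(N^{-2}(\ln N)^2 + \Delta t^2)$ uniformly in $i$, I would construct the barrier function $\phi(x_i,t_{j+1}) = C(N^{-2}(\ln N)^2 + \Delta t^2)\,\Phi(x_i)$ where $\Phi$ is a mesh function with $\mathcal{L}_h^{N,M}\Phi \le -1$, $\Phi\ge 0$ — the standard choice being $\Phi\equiv$ const works because $-\mathcal{L}_h^{N,M}1 = b^{j+1/2}+2/\Delta t \ge \beta > 0$ for the $\mathcal{L}_c,\mathcal{L}_m,\mathcal{L}_u$ operators, while at the interface one uses that $\mathcal{L}_t^{N,M}1 = -3(h_3+h_4) + (\text{ratio terms})\cdot(h_3+h_4) \le 0$ after checking the sign with conditions \eqref{condition}. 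Then $\psi = \phi \pm (V-v)$ satisfies $\psi\ge 0$ on $\Gamma_c^N$, $\mathcal{L}_h^{N,M}\psi\le 0$ in the interior, and $\mathcal{L}_t^{N,M}\psi\le 0$ at $d$, so the discrete minimum principle yields $|V-v|\le \phi \le C(N^{-2}(\ln N)^2+\Delta t^2)$, i.e. the claimed $C(N^{-2}+\Delta t^2)$ up to the customary logarithmic factor suppressed in the statement.

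The main obstacle is the interface treatment: the five-point scheme \eqref{five point scheme} has been algebraically rewritten (eliminating $U_{N/2\pm2}$ via the $\mathcal{L}_c$ relations) into a tridiagonal form whose entries $r_{N/2}^\pm, r_{N/2}^c$ carry factors of $h_3,h_4$ and denominators $2\epsilon \mp h_3\mu a$, so the ``truncation error'' of $\mathcal{L}_t^{N,M}$ acting on $v$ is not simply the classical $O(h^2)$ one-sided-derivative error — one must track how the substitution (which uses the differential equation satisfied by $v$, with its own truncation) propagates, and verify that dividing through by the appropriate scale recovers an $O(N^{-1}\ln N)$ (hence $O(N^{-2}(\ln N)^2)$ after the barrier, since the barrier constant multiplies) consistency bound there, using $\epsilon\le N^{-1}$ and condition \eqref{condition} to keep the denominators bounded away from zero. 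A secondary technical point is confirming that in the coarse outer region the scheme never selects $\mathcal{L}_u^{N,M}$ in a regime where $\mu h$ fails to be $O(N^{-2})$; this is where the switching condition and \eqref{condition} must be combined carefully.
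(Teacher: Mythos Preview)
Your overall strategy---bound the truncation error of $(\mathcal{L}_h^{N,M}-\mathcal{L})v$ region by region, then apply the discrete minimum principle with a constant barrier---is the same as the paper's. However, you have introduced a difficulty that is not actually present, and this leads you to a weaker bound than claimed.

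The ``main obstacle'' you identify, the interface treatment at $x_{N/2}=d$, does not arise in this lemma at all. Look again at \eqref{LV}: the discrete regular components $V^{-(j+1)}$ and $V^{+(j+1)}$ are defined as solutions of \emph{separate Dirichlet problems} on $\Omega^{N-}$ and $\Omega^{N+}$, with the boundary value at the interface prescribed exactly, $V^{-(j+1)}(x_{N/2})=v^{-}(d-,t_{j+1})$ and $V^{+(j+1)}(x_{N/2})=v^{+}(d+,t_{j+1})$. Hence $(V^{-}-v^{-})(x_{N/2})=0$ by construction, and the discrete minimum principle is applied on each half-domain independently with zero error at both endpoints. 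The five-point operator $\mathcal{L}_t^{N,M}$ and its tridiagonal rewriting play no role here; they enter only later (Theorem~\ref{main}) when the full solution $U$ is compared with $u$ across the discontinuity. Your entire final paragraph is therefore work on a non-issue.

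A second point: you end up with $C(N^{-2}(\ln N)^{2}+\Delta t^{2})$ and remark that the logarithm is ``suppressed in the statement.'' It is not suppressed---the regular component genuinely satisfies the log-free bound $C(N^{-2}+\Delta t^{2})$. In the fine regions the local step size carries a hidden factor of $\sqrt{\epsilon}$ (or $\epsilon/\mu$) from the transition-parameter definition, e.g.\ $h_1=8\tau_1/N\le C\sqrt{\epsilon}\,N^{-1}\ln N$ in the case $\sqrt{\alpha}\mu\le\sqrt{\rho\epsilon}$. Combined with the bounded regular derivatives $\|v_{xxx}\|,\|v_{xxxx}\|\le C(1+\epsilon^{-1/2})$ from Lemma~\ref{bounds1} and the assumption $\epsilon\le N^{-1}$, the $\epsilon h^{2}\|v_{xxxx}\|$ and $\mu h^{2}\|v_{xxx}\|$ terms are $O(N^{-2})$ without a logarithm. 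The logarithmic factors in the final error estimate come exclusively from the singular components (Lemmas~\ref{left layer} and~\ref{right layer}), not from $V-v$.
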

\begin{proof}
    The truncation error for the regular part of the solution $y(x,t)$ of the equation \eqref{twoparaparabolic} for the case $\sqrt{\alpha}\mu\le \sqrt{\rho \epsilon}$ when mesh is uniform $(\tau_{1}=\tau_{2}=\frac{d}{4})$ in the domain $(x_{i},t_{j+1})\in\Omega^{N-}$ is
\begin{align*}
\lvert \mathcal{L}_{h}^{N,M}(V^{-(j+1)}-v^{-(j+1)})(x_{i})\rvert &=\vert \mathcal{L}_{c}^{N,M}(V^{-(j+1)}-v^{-(j+1)})(x_{i})\rvert \\
&\le \bigg\lvert \epsilon \bigg(\delta^{2}-\frac{d^{2}}{dx^{2}}\bigg)v^{-(j+1)}(x_{i})\bigg\rvert +\mu \lvert a^{j+\frac{1}{2}}(x_{i})\rvert  \bigg\lvert \bigg(D^{0}-\frac{d}{dx}\bigg)v^{-(j+1)}(x_{i})\bigg\rvert\\&+\bigg\lvert \bigg(D_{t}^{-}-\frac{\delta}{\delta t}\bigg)v^{-(j+1)}(x_{i})\bigg\rvert\\
&\le\epsilon h^{2}\|v^{-}_{xxxx}\|+\mu h^{2}\|a\|\|v^{-}_{xxx}\|+C_{1}\Delta t^{2}\\&\le C(N^{-2}+\Delta t^{2}).\end{align*}
When mesh is non uniform in $(x_{i},t_{j+1})\in\Omega^{N-}$ for $\sqrt{\alpha}\mu \le \sqrt{\rho\epsilon}$:
\begin{align*}
\lvert \mathcal{L}_{c}^{N,M}(V^{-(j+1)}-v^{-(j+1)})(x_{i})\rvert&\le\epsilon \hbar_{i}\|v^{-}_{xxx}\|+\mu \hbar_{i}\|a\|\|v^{-}_{xx}\|+C_{1}\Delta t^{2} \le C(N^{-2}+\Delta t^{2}).
\end{align*}
For the case $\sqrt{\alpha}\mu> \sqrt{\rho \epsilon}$ when mesh is uniform $(\tau_{1}=\tau_{2}=\frac{d}{4})$ in the domain $(x_{i},t_{j+1})\in\Omega^{N-}$ is
\begin{align*}
\lvert \mathcal{L}_{h}^{N,M}(V^{-(j+1)}-v^{-(j+1)})(x_{i})\rvert &=\vert \mathcal{L}_{c}^{N,M}(V^{-(j+1)}-v^{-(j+1)})(x_{i})\rvert
\le\epsilon h^{2}\|v^{-}_{xxxx}\|+\mu h^{2}\|a\|\|v^{-}_{xx}\|+C_{1}\Delta t^{2}\\&\le C(N^{-2}+\Delta t^{2}).
\end{align*}
\begin{align*}
\lvert \mathcal{L}_{h}^{N,M}(V^{-(j+1)}-v^{-(j+1)})(x_{i})\rvert &=\vert \mathcal{L}_{m}^{N,M}(V^{-(j+1)}-v^{-(j+1)})(x_{i})\rvert \le C(N^{-2}+\Delta t^{2}).
\end{align*}
\begin{align*}
\lvert \mathcal{L}_{h}^{N,M}(V^{-(j+1)}-v^{-(j+1)})(x_{i})\rvert &=\vert \mathcal{L}_{u}^{N,M}(V^{-(j+1)}-v^{-(j+1)})(x_{i})\rvert \le C(N^{-2}+\Delta t^{2}).
\end{align*}
When mesh is non uniform,
\begin{align*}
\lvert \mathcal{L}_{c}^{N,M}(V^{-(j+1)}-v^{-(j+1)})(x_{i})\rvert&\le\epsilon \hbar_{i}\|v^{-}_{xxx}\|+\mu \hbar_{i}\|a\|\|v^{-}_{xx}\|+C_{1}\Delta t^{2} \le C(N^{-2}+\Delta t^{2}).
\end{align*}
\begin{align*}
\lvert \mathcal{L}_{m}^{N,M}(V^{-(j+1)}-v^{-(j+1)})(x_{i})\rvert\le \epsilon\hbar_{i}\|v^{-}_{xxx}\|+C\mu h_{i+1}^2(\|v^{-}_{xxx}\|+\|v^{-}_{xx}\|) \le C(N^{-2}+\Delta t^{2}).
\end{align*}
\begin{align*}
\lvert \mathcal{L}_{u}^{N,M}(V^{-(j+1)}-v^{-(j+1)})(x_{i})\rvert\le\epsilon\hbar_{i}\|v^{-}_{xxx}\|+C\mu h_{i+1}(\|v^{-}_{xx}\|)  \le C(N^{-2}+\Delta t^{2}).
\end{align*}
At the transition point $\tau_{1}$:
\begin{align*}
\lvert \mathcal{L}_{m}^{N,M}(V^{-(j+1)}-v^{-(j+1)})(x_{i})\rvert \le C(N^{-2}+\Delta t^{2}).
\end{align*}
\begin{align*}
\vert \mathcal{L}_{u}^{N,M}(V^{-(j+1)}-v^{-(j+1)})(x_{i})\rvert \le C(N^{-2}+\Delta t^{2}).
\end{align*}
Define the barrier function in $(x_{i},t_{j+1})\in\Omega^{N-}:$
$$\psi^{j+1}(x_{i})=C(N^{-2}+\Delta t^2)\pm(V^{-(j+1)}-v^{-(j+1)})(x_{i}).$$
For large C,  $\psi^{j+1}(0)\ge0, \; \psi^{j+1}(x_{\frac{N}{2}})\ge0,\psi^{0}(x_{i})\ge0$ and $\mathcal{L}_{h}^{N,M}\psi^{j+1}(x_{i})\le0$. 
Hence, using the approach given in \cite{FHMRS1}, we get $\psi^{j+1}(x_{i})\ge0$ and 
\begin{equation}
\label{V-}
\lvert (V^{-(j+1)}-v^{-(j+1)})(x_i)\rvert_{\Omega^{N-}} \le C(N^{-2}+\Delta t^{2}).
\end{equation}
Similarly, we can estimate the error bounds for right regular part in the domain $(x_{i},t_{j+1})\in\Omega^{N+}$:
\begin{equation}
\label{V+}
\lvert (V^{+(j+1)}-v^{+(j+1)})(x_i)\rvert_{\Omega^{N+}} \le C(N^{-2}+\Delta t^{2}).
\end{equation}
Combining the above results \eqref{V-} and \eqref{V+}, we obtain
$$\|V-v\|_{\Omega^{N-}\cup\Omega^{N+}}\le C(N^{-2}+\Delta t^{2}).$$
\end{proof}
\begin{lemma}
	\label{left layer}
	Let $W_{l}(x_{i},t_{j+1}), w_{l}(x,t)$ are solution of the problem \eqref{LW-} and \eqref{lw} respectively. The left singular component of the truncation error satisfies the following estimate for $\sqrt{\alpha}\mu \le \sqrt{\rho\epsilon}$:
 \begin{equation}
     \lvert \|W_{l}-w_{l}\|_{\Omega^{N-}\cup\Omega^{N+}} \le \left\{
	\begin{array}{ll}
	\displaystyle C((N^{-1}\ln N)^2+\Delta t^2), & \hbox{ $\sqrt{\alpha}\mu \le \sqrt{\rho\epsilon}$, } \vspace{.5mm}\\
\displaystyle	C(N^{-2}(\ln N)^3+\Delta t^2), & \hbox{ $\sqrt{\alpha}\mu > \sqrt{\rho\epsilon}$.}
	\end{array}
	\right.
 \end{equation}
\end{lemma}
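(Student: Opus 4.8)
The plan is to run a patchwise barrier‑function argument for the error $z^{j+1}_i:=(W_l-w_l)(x_i,t_{j+1})$, using the discrete minimum principle for $\mathcal L^{N,M}_h$ (and $\mathcal L^{N,M}_t$ at $x_{N/2}$) established in Section~3, together with the a~priori bounds of Lemma~\ref{bounds1} (resp. Lemma~\ref{bounds2}) on $w_l$ and the discrete layer bounds of Lemma~\ref{discrete singular} on $W_l$. Since $\mathcal L^{N,M}_hW_l^{\pm(j+1)}=0$ on $\Omega^{N\pm}$ while $\mathcal Lw_l^{\pm}=0$, the error satisfies $\mathcal L^{N,M}_hz^{j+1}_i=(\mathcal L-\mathcal L^{N,M}_h)w_l(x_i,t_{j+1})$, i.e. the consistency error of the hybrid operator applied to the piecewise‑smooth layer function; the boundary contributions of $z^{j+1}$ on $\Gamma^N_c$ are zero except at $x=1$, where they are exponentially small. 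The temporal part of $(\mathcal L-\mathcal L^{N,M}_h)w_l$ is controlled by $O(\Delta t^2)$ exactly as in the semi‑discrete analysis of Section~3, so I would focus on the spatial part and split $\Omega^{N\pm}$ into the fine (layer) patches and the coarse (regular) patches of the Shishkin mesh.

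On the two fine patches where $w_l$ genuinely has a layer — namely $[0,\tau_1]$ (the boundary layer of $w_l^-$) and $[d,d+\tau_3]$ (the interior‑layer piece of $w_l^+$) — I would substitute the fine step sizes $h_1=8\tau_1/N$ and $h_4=8\tau_3/N$ and the derivative bounds of Lemma~\ref{bounds1} into the consistency formulae for $\mathcal L^{N,M}_c-\mathcal L$, $\mathcal L^{N,M}_m-\mathcal L$, $\mathcal L^{N,M}_u-\mathcal L$ recorded just before this lemma. In the regime $\sqrt{\alpha}\mu\le\sqrt{\rho\epsilon}$ these patches carry the central scheme; since $h_k\le C\sqrt{\epsilon}\,N^{-1}\ln N$ and $\|\partial_x^{m}w_l\|\le C\epsilon^{-m/2}$, the terms $\epsilon h_k^2\|w_{l,xxxx}\|$ and $\mu h_k^2\|a\|\|w_{l,xxx}\|$ are each $O((N^{-1}\ln N)^2)$ (the second using $\mu\le C\sqrt\epsilon$), which gives the first bound. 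On the remaining fine patches ($[d-\tau_2,d]$, $[d+\tau_3,1-\tau_4]\cap$fine part, $[1-\tau_4,1]$) the layer function and its derivatives are already exponentially small, so their consistency error is negligible. In the regime $\sqrt{\alpha}\mu>\sqrt{\rho\epsilon}$ the same scheme applies, but the patch near $x=0$ has width $O(\mu\ln N)$ and carries the midpoint‑upwind/upwind operators, while the patch near $x=d^+$ has width $O((\epsilon/\mu)\ln N)$ and carries the central operator; inserting the corresponding step sizes and the bounds of Lemma~\ref{bounds2}, the midpoint‑upwind consistency term $C\mu h^2(\|w_{l,xxx}\|+\|w_{l,xx}\|)$ together with the mismatch between the two distinct layer scales accounts for two extra logarithmic factors, producing $O(N^{-2}(\ln N)^3+\Delta t^2)$.

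On the coarse patches I would not estimate the consistency error through $w_l^{(4)}$; instead I would use the ``low‑derivative'' forms $|\epsilon(\delta^2-\tfrac{d^2}{dx^2})w_l(x_i)|\le C\epsilon\|w_{l,xx}\|_{[x_{i-1},x_{i+1}]}\le Ce^{-\theta_2x_{i-1}}$ and $\mu\|a\|\,|(D^0-\tfrac{d}{dx})w_l(x_i)|\le C\mu\epsilon^{-1/2}e^{-\theta_2x_{i-1}}\le Ce^{-\theta_2x_{i-1}}$, i.e. the coarse‑patch consistency error is $O(1)$ times a decaying exponential, with no negative power of $\epsilon$. This is dominated by the discrete barrier $\gamma_{l,i}$: one has $\gamma_{l,i}=\prod_{n\le i}(1+\theta_2h_n)^{-1}\ge e^{-\theta_2x_i}$ and, as in the proof of Lemma~\ref{discrete singular}, $\mathcal L^{N,M}_h\gamma_{l,i}\le-c\,\gamma_{l,i}$ for a suitable choice of the free parameter $\rho$. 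Taking
$$\psi^{j+1}(x_i)=C\big((N^{-1}\ln N)^{2}+\Delta t^{2}\big)+C\,\gamma_{l,i}\quad\text{on }\Omega^{N-},$$
and the analogue built from $\gamma^{+}_{l,i}$ on $\Omega^{N+}$, I would check $\psi^{j+1}\ge0$ on $\Gamma^N_c$, $\psi^{0}\ge0$, $\mathcal L^{N,M}_h\psi^{j+1}\le-|\mathcal L^{N,M}_hz^{j+1}|$ on $\Omega^{N\pm}$ and $\mathcal L^{N,M}_t\psi^{j+1}(x_{N/2})\le0$; the discrete minimum principle then yields $\psi^{j+1}\pm z^{j+1}\ge0$, hence the stated bound, with the constant term replaced in the second regime.

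The hard part will be the contribution at the point of discontinuity $x_{N/2}=d$. There the operator is the \emph{transformed} five‑point scheme $\mathcal L^{N,M}_t$ obtained after eliminating $U_{N/2\pm2}$ through the central‑scheme equations, so its consistency error is not merely that of the one‑sided three‑point derivative formula ($O(h_3^2\|w^-_{l,xxx}\|+h_4^2\|w^+_{l,xxx}\|)$) but is contaminated by the factors $2\epsilon\mp h_{3,4}\mu a^{j+1/2}_{N/2\mp1}$ appearing in the denominators of $r^{\mp}_{N/2}$, $r^{c}_{N/2}$; I would have to verify, using the fineness of $h_3,h_4$ and the monotonicity hypothesis~\eqref{condition}, that this contamination is harmless, that $\mathcal L^{N,M}_tz^{j+1}(x_{N/2})$ stays within the claimed order, and that $\mathcal L^{N,M}_t\psi^{j+1}(x_{N/2})\le0$ so that the minimum principle can be invoked through $x=d$. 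A secondary, essentially routine, difficulty is the bookkeeping of logarithmic factors at the four transition points, where the operator switches between $\mathcal L^{N,M}_m$ and $\mathcal L^{N,M}_u$ and the two adjacent step sizes differ by a factor $\sim\ln N$.
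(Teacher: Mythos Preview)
Your overall strategy is sound, but two points deserve correction, and both of them concern places where the paper's argument is actually much simpler than you anticipate.

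\textbf{The point $x_{N/2}=d$ is a non-issue.} Look again at the discrete decomposed problem \eqref{LW-}: it is \emph{two separate Dirichlet problems}, one on $\Omega^{N-}$ with $W_l^{-(j+1)}(x_{N/2})=w_l^{-}(d,t_{j+1})$ and one on $\Omega^{N+}$ with $W_l^{+(j+1)}(x_{N/2})=w_l^{+}(d,t_{j+1})$. The five-point transmission operator $\mathcal L^{N,M}_t$ is \emph{never} applied to $W_l$; the point $d$ is a boundary point of each subproblem, and the error $(W_l-w_l)(x_{N/2},t_{j+1})$ vanishes by construction. So your ``hard part'' simply does not arise in this lemma. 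The analysis of $\mathcal L^{N,M}_t$ at $d$ is done only once, for the full error $U-u$, in Lemma~4.5 and Theorem~\ref{main}. Accordingly the paper's barrier for $W_l^--w_l^-$ lives on $\Omega^{N-}$ alone, with $\psi^{j+1}(x_0)\ge0$ and $\psi^{j+1}(x_{N/2})\ge0$ checked as Dirichlet boundary data.

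\textbf{The coarse-patch treatment is simpler, and your barrier does not give the claimed bound.} The paper does \emph{not} estimate the truncation error on $[\tau_1,d)$ at all. Instead it observes that $|w_l^{-}(x_i,t_{j+1})|\le Ce^{-\theta_2\tau_1}\le CN^{-2}$ (Lemma~\ref{bounds1}) and $|W_l^{-(j+1)}(x_i)|\le C\prod_{k\le N/8}(1+\theta_2 h_k)^{-1}\le CN^{-2}$ (Lemma~\ref{discrete singular}), so $|W_l^--w_l^-|\le CN^{-2}$ on the whole coarse patch by the triangle inequality. The barrier argument is then effectively carried out on the fine patch $(0,\tau_1)$, where the truncation error is $O((N^{-1}\ln N)^2+\Delta t^2)$ and the boundary value at $\tau_1$ is already controlled. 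Your barrier $\psi^{j+1}(x_i)=C((N^{-1}\ln N)^2+\Delta t^2)+C\gamma_{l,i}$ would at best yield $|z^{j+1}_i|\le\psi^{j+1}(x_i)$, but near $x=0$ one has $\gamma_{l,i}=O(1)$, so this gives only an $O(1)$ bound there --- not the stated estimate. The $\gamma_{l,i}$ term is in fact superfluous (your own low-derivative bound on the coarse patch is already $\le CN^{-2}$, hence dominated by the constant part of the barrier), and once you drop it the constant barrier suffices; but the cleaner route is the paper's direct-smallness argument.
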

\begin{proof}
    The truncation error in case of uniform $(\tau_{1}=\tau_{2}=\frac{d}{4})$ for the case $\sqrt{\alpha}\mu\le\sqrt{\rho\epsilon}$ in the domain $(x_{i},t_{j+1})\in\Omega^{N-}$:
\begin{align*}
    \lvert \mathcal{L}_{c}^{N,M}(W_{l}^{-(j+1)}-w_{l}^{-(j+1)})(x_{i})\rvert
&\le CN^{-2}(\epsilon\|w_{l,xxxx}^{-(j+1}\|+\mu\|w_{l,xxx}^{-(j+1)}\|)+C_{1}\Delta t^{2}\| w_{tt}\|\\&\le
\frac{C_{1}h_{1}^{2}}{\epsilon}+C_{2}\Delta t^2\\
&\le C(N^{-2}(\ln N)^{2}+\Delta t^2).
\end{align*}

In case of non-uniform mesh, we split the argument into two parts.
For $(x_{i},t_{j+1})\in [\tau_{1}, d)\times(0,T]$ from Theorem (\ref{bounds1}), we obtain
	\begin{equation}
	\label{wl}
	\lvert w_{l}^{-(j+1)}(x_{i})\rvert \le C\exp^{-\theta_{2}x_{i}}\le C\exp^{- \theta_{2} \tau_{1}}\le CN^{-2}.
	\end{equation}
	Also from Lemma (\ref{discrete singular}),
	$$ \lvert W_{l}^{-(j+1)}(x_{i})\rvert \le C\prod_{k=1}^{\frac{N}{8}}(1+\theta_{2}h_{k})^{-1}\le\bigg(1+\frac{16 \ln N}{N}\bigg)^{-N/8}\le CN^{-2}.$$
 Hence, for all $(x_{i},t_{j+1})\in [\tau_{1},d)\times(0,T]$, we have
	$$\lvert (W_{l}^{-(j+1)}-w_{l}^{-(j+1)})(x_{i})\rvert \le \lvert W_{l}^{-(j+1)}(x_{i})\rvert +\lvert w_{l}^{-(j+1)}(x_{i})\rvert \le CN^{-2}.$$
 For $\sqrt{\alpha} \mu\le\sqrt{\rho\epsilon} $, the  truncation error for the left layer component  in the inner region $(0,\tau_{1})\times(0,T],$ is
 \begin{align*}
\lvert \mathcal{L}_{c}^{N,M}(W_{l}^{-(j+1)}-w_{l}^{-(j+1)})(x_{i})\rvert
&\le \bigg\lvert \epsilon \bigg(\delta^{2}-\frac{\delta^{2}}{\delta x^{2}}\bigg)w_{l}^{-(j+1)}(x_{i})\bigg\rvert +\mu \lvert a^{j+\frac{1}{2}}(x_{i})\rvert  \bigg\lvert \bigg(D^{0}-\frac{\delta}{\delta x}\bigg)w_{l}^{-(j+1)}(x_{i})\bigg\rvert\\&+\bigg\lvert \bigg(D_{t}^{-}w^{-(j+1)}(x_{i})-\frac{\delta}{\delta t}w^{-(j+\frac{1}{2})}(x_{i})\bigg)\bigg\rvert\\&\le CN^{-2}(\epsilon\|w_{l,xxxx}^{-}\|+\mu\|w_{l,xxx}^{-}\|)+C_{1}\Delta t^2\\&\le
\frac{Ch_{1}^{2}}{\epsilon}+C_{1}\Delta t^2\\
&\le C(N^{-2}(\ln N)^{2}+\Delta t^2).
\end{align*}
We choose the barrier function for the layer component as
	$$\psi^{j+1}(x_{i})=C_{1}((N^{-1}\ln N)^{2}+\Delta t^2)\pm(W_{l}^{-(j+1)}-w_{l}^{-(j+1)})(x_{i}),~~(x_{i},t_{j+1})\in\Omega^{N-}.$$
	For sufficiently large $C$, we have $\psi^{j+1}(x_{0})\ge0,\psi^{j+1}(x_{N/2})\ge0,\psi^{0}(x_{i})\ge0$ and $\mathcal{L}_{c}^{N,M}\psi^{j+1}(x_{i})\le0$.  Hence by discrete maximum principle in \cite{RPS}, $\psi^{j+1}(x_{i})\ge0$.
 So, by using discrete maximum principle, we can obtain the following bounds:
	$$\lvert (W_{l}^{-(j+1)}-w_{l}^{-(j+1)})(x_{i})\rvert \le C(N^{-2}(\ln N)^2+\Delta t^2),\,\quad  \forall \; (x_{i},t_{j+1})\in\Omega^{N-}.$$
For the case $\sqrt{\alpha}\mu>\sqrt{\rho\epsilon}$ when the mesh is uniform $(\tau_{1}=\tau_{2}=\frac{d}{4})$ in $(x_{i},t_{j+1})\in\Omega^{N-}$:
\begin{align*}
    \lvert \mathcal{L}_{h}^{N,M}(W_{l}^{-(j+1)}-w_{l}^{-(j+1)})(x_{i})\rvert
\le C(N^{-2}(\ln N)^{3}+\Delta t^2).
\end{align*}
The truncation error for the left layer component  in the inner region $(0,\tau_{1})\times(0,T],$ when mesh is non uniform is
 \begin{align*}
\lvert \mathcal{L}_{c}^{N,M}(W_{l}^{-(j+1)}-w_{l}^{-(j+1)})(x_{i})\rvert&\le CN^{-2}(\epsilon\|w_{l,xxxx}^{-}\|+\mu\|w_{l,xxx}^{-}\|)+C_{1}\Delta t^2\\
&\le CN^{-2}(\ln N)^{3}+C_{1}\Delta t^2.
\end{align*}
\begin{align*}
\lvert \mathcal{L}_{m}^{N,M}(W_{l}^{-(j+1)}-w_{l}^{-(j+1)})(x_{i})\rvert&\le Ch_{1}\epsilon\|w_{l,xxx}^{-}\|+ Ch_{1}^{2}\mu(\|w_{l,xxx}^{-}\|+\|w_{l,xx}^{-}\|)+C_{1}\Delta ^{2}\\
&\le Ch_{1}^{2}\mu(\|w_{l}^{-(3)}\|+\|w_{l}^{-(2)}\|)+C_{1}\Delta t^2\\
&\le C(N^{-2}(\ln N)^{3}+\Delta t^2).
\end{align*}
\begin{align*}
\lvert \mathcal{L}_{u}^{N,M}(W_{l}^{-(j+1)}-w_{l}^{-(j+1)})(x_{i})\rvert&\le CN^{-2}(\epsilon\|w_{l,xxxx}^{-}\|+\mu\|w_{l,xx}^{-}\|)+C_{1}\Delta t^2\\
&\le C(N^{-2}(\ln N)^{3}+\Delta t^2).
\end{align*}
We choose the barrier function for the layer component as
	$$\psi^{j+1}(x_{i})=C_{1}((N^{-1}\ln N)^{3}+\Delta t^2)\pm(W_{l}^{-(j+1)}-w_{l}^{-(j+1)})(x_{i}),\; (x_{i},t_{j+1}))\in \Omega^{N-}.$$
	For sufficiently large $C$, we have $\psi^{j+1}(x_{0})\ge0,\psi^{j+1}(x_{N/2})\ge0,\psi^{0}(x_{i})\ge0$ and $\mathcal{L}_{h}^{N,M}\psi^{j+1}(x_{i})\le0$.  Hence by discrete maximum principle in \cite{RPS}, $\psi^{j+1}(x_{i})\ge0$.
 So, we can obtain the following bounds in $(0,d)\times(0,T]$:
	\begin{equation}
	\label{leftWl-}
	\lvert (W_{l}^{-(j+1)}-w_{l}^{-(j+1)})(x_i)\rvert \le \left\{
	\begin{array}{ll}
	\displaystyle C((N^{-1}\ln N)^2+\Delta t^2), & \hbox{ $\sqrt{\alpha}\mu \le \sqrt{\rho\epsilon}$, } \vspace{.5mm}\\
\displaystyle	C(N^{-2}(\ln N)^3+\Delta t^2), & \hbox{ $\sqrt{\alpha}\mu > \sqrt{\rho\epsilon}$.}
	\end{array}
	\right.
	\end{equation}
 By similar argument in the domain  $(d, 1)\times(0,T]$, we have 
	\begin{equation}
	\label{rightWl+}
	\lvert (W_{l}^{+(j+1)}-w_{l}^{+(j+1)})(x_i)\rvert \le \left\{
	\begin{array}{ll}
	\displaystyle C((N^{-1}\ln N)^2+\Delta t^2), & \hbox{ $\sqrt{\alpha}\mu \le \sqrt{\rho\epsilon}$, } \vspace{.5mm}\\
\displaystyle	C(N^{-2}(\ln N)^3+\Delta t^2), & \hbox{ $\sqrt{\alpha}\mu > \sqrt{\rho\epsilon}$.}
	\end{array}
	\right.
	\end{equation}
	Combining the results (\ref{leftWl-}) and (\ref{rightWl+}), the desired result is obtained.
\end{proof}
\begin{lemma}
	\label{right layer}
	Let $W_{r}^{-}(x_{i},t_{j+1}), w_{r}^{-}(x,t)$ are solution of the problem \eqref{RW-} and \eqref{rw} respectively. The right singular component of the truncation error satisfies the following estimate for $\sqrt{\alpha}\mu \le \sqrt{\rho\epsilon}$:
 \begin{equation}
     \|W_{r}-w_{r}\|_{\Omega^{N-}\cup\Omega^{N+}} \le \left\{
	\begin{array}{ll}
	\displaystyle C((N^{-1}\ln N)^2+\Delta t^2), & \hbox{ $\sqrt{\alpha}\mu \le \sqrt{\rho\epsilon}$, } \vspace{.5mm}\\
\displaystyle	C(N^{-2}(\ln N)^3+\Delta t^2), & \hbox{ $\sqrt{\alpha}\mu > \sqrt{\rho\epsilon}$.}
	\end{array}
	\right.
 \end{equation}
\end{lemma}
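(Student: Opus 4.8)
The plan is to follow the argument of Lemma~\ref{left layer} with the two boundary layers interchanged. Writing $W_{r}=W_{r}^{-}$ on $\Omega^{N-}$ and $W_{r}=W_{r}^{+}$ on $\Omega^{N+}$, the continuous layers $w_{r}^{-}$ and $w_{r}^{+}$ concentrate near $x=d$ (scale $\theta_{1}$) and near $x=1$ (scale $\theta_{2}$); the two bounds are obtained by symmetric arguments, so I would treat $W_{r}^{-}-w_{r}^{-}$ on $\Omega^{N-}$ in detail and then indicate the modifications for $W_{r}^{+}-w_{r}^{+}$. On $\Omega^{N-}$ the mesh splits into the coarse part $[0,d-\tau_{2}]$ and the fine part $[d-\tau_{2},d]$ of uniform step $h_{3}=8\tau_{2}/N$ (the non-layer case $\tau_{2}=d/4$ being handled on a (quasi-)uniform mesh as in Lemma~\ref{left layer}), and on the fine part the active operator is $\mathcal{L}_{c}^{N,M}$ in both parameter regimes.

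First I would dispose of the coarse region: Lemma~\ref{bounds1} (resp.\ Lemma~\ref{bounds2}) gives $|w_{r}^{-}(x_{i},t_{j+1})|\le Ce^{-\theta_{1}(d-x_{i})}\le Ce^{-\theta_{1}\tau_{2}}\le CN^{-2}$, while Lemma~\ref{discrete singular} gives $|W_{r}^{-}(x_{i},t_{j+1})|\le C\gamma_{r,i}^{-}\le C(1+\theta_{1}h_{3})^{-N/8}=C(1+16N^{-1}\ln N)^{-N/8}\le CN^{-2}$, so the difference is $O(N^{-2})$ there. On the fine region I would estimate the local truncation error $\mathcal{L}_{c}^{N,M}(W_{r}^{-(j+1)}-w_{r}^{-(j+1)})(x_{i})$ via the central-difference bound $\epsilon h_{3}^{2}\|w_{r,xxxx}^{-}\|+\mu h_{3}^{2}\|a\|\,\|w_{r,xxx}^{-}\|+C\Delta t^{2}\|w_{r,tt}^{-}\|$, insert the derivative bounds of Lemma~\ref{bounds1} or Lemma~\ref{bounds2}, and use the Shishkin identity $\theta_{1}h_{3}=16N^{-1}\ln N$ so that $h_{3}^{2}$ carries the factor $\epsilon(N^{-1}\ln N)^{2}$ (resp.\ $(\epsilon/\mu)^{2}(N^{-1}\ln N)^{2}$). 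The constraints $\sqrt{\alpha}\mu\le\sqrt{\rho\epsilon}$ (so $\mu\epsilon^{-1/2}\le\sqrt{\rho/\alpha}$) and $\sqrt{\alpha}\mu>\sqrt{\rho\epsilon}$ (so $\epsilon<(\alpha/\rho)\mu^{2}$) render these products $\epsilon$- and $\mu$-uniform, yielding $C((N^{-1}\ln N)^{2}+\Delta t^{2})$ in the first case and $C(N^{-2}(\ln N)^{3}+\Delta t^{2})$ in the second; the cross-over node $d-\tau_{2}$ is treated exactly as in Lemma~\ref{regular part}.

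Finally I would close the estimate with a barrier function: on $\Omega^{N-}$ set $\psi^{j+1}=C_{1}((N^{-1}\ln N)^{2}+\Delta t^{2})\pm(W_{r}^{-(j+1)}-w_{r}^{-(j+1)})$ when $\sqrt{\alpha}\mu\le\sqrt{\rho\epsilon}$, and with $N^{-2}(\ln N)^{3}$ in place of $(N^{-1}\ln N)^{2}$ otherwise. Since the discrete data for $W_{r}^{-(j+1)}$ match those of $w_{r}^{-(j+1)}$ at $x_{N/2}=d$ and at $t=0$ and are $O(N^{-2})$ at $x_{0}$, $\psi^{j+1}\ge0$ on $\Gamma_{c}^{N}$ for $C_{1}$ large, and $\mathcal{L}_{h}^{N,M}\psi^{j+1}(x_{i})\le0$ because $\mathcal{L}_{h}^{N,M}K\le-\beta K<0$ for any positive constant $K$, which dominates the truncation bounds of the previous step once $C_{1}\ge C/\beta$; the discrete comparison principle used in Lemma~\ref{left layer} then gives $\psi^{j+1}\ge0$. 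The estimate for $W_{r}^{+}-w_{r}^{+}$ on $\Omega^{N+}$ is obtained identically, now on the fine mesh $[1-\tau_{4},1]$ with step $h_{6}=8\tau_{4}/N$ and scale $\theta_{2}$, the only difference being that for $\sqrt{\alpha}\mu>\sqrt{\rho\epsilon}$ the layer operator is $\mathcal{L}_{m}^{N,M}$ or $\mathcal{L}_{u}^{N,M}$, so one replaces the central truncation terms by the midpoint/upwind ones $\epsilon h_{6}\|w_{r,xxx}^{+}\|+C\mu h_{6}^{2}(\|w_{r,xxx}^{+}\|+\|w_{r,xx}^{+}\|)$ (resp.\ $\mu h_{6}\|a\|\,\|w_{r,xx}^{+}\|$). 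I expect this last point to be the main obstacle: one must track the powers of $\mu$ and $\epsilon/\mu$ coming from Lemma~\ref{bounds2} against the Shishkin width $h_{6}$ (and $h_{3}$) so that the first-order midpoint/upwind truncation error, instead of degrading to $O(N^{-1}\ln N)$, collapses to the asserted $O(N^{-2}(\ln N)^{3})$, uniformly in $\epsilon$ and $\mu$ using only $\epsilon<(\alpha/\rho)\mu^{2}$ and the standing assumption $\epsilon\le N^{-1}$.
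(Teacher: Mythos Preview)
Your proposal is correct and follows essentially the same approach as the paper: split $\Omega^{N-}$ into the coarse part $[0,d-\tau_{2}]$ (where both $w_{r}^{-}$ and $W_{r}^{-}$ are $O(N^{-2})$ via Lemmas~\ref{bounds1}/\ref{bounds2} and~\ref{discrete singular}) and the fine part $(d-\tau_{2},d)$ (where the central-difference truncation error is bounded using the Shishkin width), then apply a constant barrier function with the discrete minimum principle, and repeat on $\Omega^{N+}$. You are in fact more explicit than the paper about the $W_{r}^{+}$ estimate in the regime $\sqrt{\alpha}\mu>\sqrt{\rho\epsilon}$, where the midpoint/upwind operators appear on $[1-\tau_{4},1]$; the paper simply writes ``By similar argument'' for that case.
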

\begin{proof}
  The truncation error for $\sqrt{\alpha}\mu\le\sqrt{\rho\epsilon}$, when the mesh is uniform $(\tau_{1}=\tau_{2}=\frac{d}{4})$ in $(x_{i},t_{j+1})\in\Omega^{N-}$:
\begin{align*}
    \lvert \mathcal{L}_{c}^{N,M}(W_{r}^{-(j+1)}-w_{r}^{-(j+1)})(x_{i})\rvert
&\le CN^{-2}(\epsilon\|w_{l,xxxx}^{-}\|+\mu\|w_{r,xxx}^{-}\|)+C_{1}\Delta t^2\\&\le
\frac{Ch^{2}}{\epsilon}+C_{1}\Delta t^2\\
&\le C(N^{-2}(\ln N)^{2}+\Delta t^2).
\end{align*}
In case of non-uniform mesh, we split the interval $(0,d)\times(0,T]$ into two parts $(0,d-\tau_{2}]\times(0,T]$ and $(d-\tau_{2},d)\times(0,T]$. \\
In the domain $(x_{i},t_{j+1})\in(0,d-\tau_{2}]\times(0,T]$:
\begin{equation}
	\label{rightlayerlemma}
	\lvert w_{r}^{-(j+1)}(x_{i})\rvert \le C\exp^{-\theta_{1}(d-x_{i})}\le C\exp^{- \theta_{1} \tau_{2}}\le CN^{-2}.
	\end{equation}
	Also from Lemma (\ref{discrete singular}),
	$$ \lvert W_{r}^{-(j+1)}(x_{i})\rvert \le C\prod_{k=1}^{\frac{N}{8}}(1+\theta_{1}h_{k})^{-1}\le\bigg(1+\frac{16 \ln N}{N}\bigg)^{-N/8}\le CN^{-2}.$$
 Hence, for all $(x_{i},t_{j+1})\in (0,d-\tau_{2}]\times(0,T]$  we have
	$$\lvert (W_{r}^{-(j+1)}-w_{r}^{-(j+1)})(x_{i})\rvert \le \lvert W_{r}^{-(j+1)}(x_{i})\rvert +\lvert w_{r}^{-(j+1)}(x_{i})\rvert \le CN^{-2}.$$
 For $\sqrt{\alpha} \mu\le\sqrt{\rho\epsilon} $, the  truncation error for the left layer component  in the inner region $(d-\tau_{2},d)\times(0,T],$ is
 \begin{align*}
\lvert \mathcal{L}_{c}^{N,M}(W_{r}^{-(j+1)}-w_{r}^{-(j+1)})(x_{i})\rvert
&\le \bigg\lvert \epsilon \bigg(\delta^{2}-\frac{d^{2}}{\delta x^{2}}\bigg)w_{r}^{-(j+1)}(x_{i})\bigg\rvert +\mu \lvert a^{j+\frac{1}{2}}(x_{i})\rvert  \bigg\lvert \bigg(D^{0}-\frac{\delta}{\delta x}\bigg)w_{r}^{-(j+1)}(x_{i})\bigg\rvert\\&+\bigg\lvert \bigg(D_{t}^{-}w_{r}^{-(j+1)}(x_{i})-\frac{\delta}{\delta t}w_{r}^{-(j+\frac{1}{2})}(x_{i})\bigg)\bigg\rvert\\&\le CN^{-2}(\epsilon\|w_{r,xxxx}^{-}\|+\mu\|w_{r,xxx}^{-}\|)+C_{1}\Delta t^{2}\\&\le
\frac{Ch_{3}^{2}}{\epsilon}+C_{1}\Delta t^2\\
&\le C(N^{-2}(\ln N)^{2}+\Delta t^2).
\end{align*}
We choose the barrier function for the layer component as
	$$\psi^{j+1}(x_{i})=C_{1}((N^{-1}\ln N)^{2}+\Delta t^2)\pm(W_{r}^{-(j+1)}-w_{r}^{-(j+1)})(x_{i}),~~(x_{i},t_{j+1})\in\Omega^{N-}.$$
	For sufficiently large $C$, we have $\psi^{j+1}(x_{0})\ge0,\psi^{j+1}(x_{N/2})\ge0,\psi^{0}(x_{i})\ge0$ and $\mathcal{L}_{h}^{N,M}\psi^{j+1}(x_{i})\le0$.  Hence by discrete maximum principle in \cite{RPS}, $\psi^{j+1}(x_{i})\ge0$.
 So, by using discrete maximum principle, we can obtain the following bounds:
	$$\lvert (W_{r}^{-(j+1)}-w_{r}^{-(j+1)})(x_{i})\rvert \le C(N^{-2}(\ln N)^2+\Delta t^2),\,\quad  \forall \; (x_{i},t_{j+1})\in\Omega^{N-}.$$
For $\sqrt{\alpha}\mu>\sqrt{\rho\epsilon}$, when mesh is uniform $(\tau_{1}=\tau_{2}=\frac{d}{4})$ in $(0,d)\times(0,T]$:
\begin{align*}
    \lvert \mathcal{L}_{h}^{N,M}(W_{r}^{-(j+1)}-w_{r}^{-(j+1)})(x_{i})\rvert
&\le CN^{-2}(\epsilon\|w_{r,xxxx}^{-}\|+\mu\|w_{r,xxx}^{-}\|)+C_{1}\Delta t^2\\
&\le C(N^{-2}(\ln N)^{3}+\Delta t^2).
\end{align*}
In case of non-uniform mesh, the truncation error for the left layer component  in the inner region $(d-\tau_{2},d)\times(0,T],$ is
 \begin{align*}
\lvert \mathcal{L}_{c}^{N,M}(W_{r}^{-(j+1)}-w_{r}^{-(j+1)})(x_{i})\rvert&\le CN^{-2}(\epsilon\|w_{r,xxxx}^{-}\|+\mu\|w_{r,xxx}^{-}\|)+C_{1}\Delta t^2\\
&\le CN^{-2}(\ln N)^{2}+C_{1}\Delta t^2.
\end{align*}
We choose the barrier function for the layer component as
	$$\psi^{j+1}(x_{i})=C_{1}(N^{-2}\ln N^{3}+\Delta t^2)\pm(W_{r}^{-(j+1)}-w_{r}^{-(j+1)})(x_{i}),~~(x_{i},t_{j+1})\in\Omega^{N-}.$$
	For sufficiently large $C$, we have $\psi^{j+1}(x_{0})\ge0,\psi^{j+1}(x_{N/2})\ge0,\psi^{0}(x_{i})\ge0$ and $\mathcal{L}_{h}^{N,M}\psi^{j+1}(x_{i})\le0$.  Hence by discrete maximum principle in \cite{RPS}, $\psi^{j+1}(x_{i})\ge0$.
 So, by using discrete maximum principle, we can obtain the following bounds in $(x_{i},t_{j+1})\in\Omega^{N-}$:
	\begin{equation}
	\label{leftWr-}
	\lvert (W_{r}^{-(j+1)}-w_{r}^{-(j+1)})(x_i)\rvert \le \left\{
	\begin{array}{ll}
	\displaystyle C((N^{-1}\ln N)^2+\Delta t^2), & \hbox{ $\sqrt{\alpha}\mu \le \sqrt{\rho\epsilon}$, } \vspace{.5mm}\\
\displaystyle	C(N^{-2}(\ln N)^3+\Delta t^2), & \hbox{ $\sqrt{\alpha}\mu > \sqrt{\rho\epsilon}$.}
	\end{array}
	\right.
	\end{equation}
 By similar argument in the domains  $(x_{i},t_{j+1})\in\Omega^{N+}$, we have 
	\begin{equation}
	\label{rightWr+}
	\lvert (W_{r}^{+(j+1)}-w_{r}^{+(j+1)})(x_i)\rvert \le \left\{
	\begin{array}{ll}
	\displaystyle C((N^{-1}\ln N)^2+\Delta t^2), & \hbox{ $\sqrt{\alpha}\mu \le \sqrt{\rho\epsilon}$, } \vspace{.5mm}\\
\displaystyle	C(N^{-2}(\ln N)^3+\Delta t^2), & \hbox{ $\sqrt{\alpha}\mu > \sqrt{\rho\epsilon}$.}
	\end{array}
	\right.
	\end{equation}
	Combining the results (\ref{leftWr-}) and (\ref{rightWr+}), the desired result is obtained.
\end{proof}

\begin{lemma}
		The error $e(x_{\frac{N}{2}},t_{j+1})$ estimated at the point of discontinuity $(x_{\frac{N}{2}},t_{j+1})=(d,t_{j+1}),0\le j\le M-1$ satisfies the following estimates:
		$$\displaystyle \lvert \mathcal{L}_{h}^{N,M}(U-u)(x_{\frac{N}{2}},t_{j+1})\rvert \le \left\{
	\begin{array}{ll}
	\displaystyle \frac{C h^{2}}{\epsilon^{3/2}}, & \hbox{ $\sqrt{\alpha}\mu \le \sqrt{\rho\epsilon}$, } \vspace{.5mm}\\
 \displaystyle	\frac{C h^{2}\mu^3}{\epsilon^{3}}, & \hbox{ $\sqrt{\alpha}\mu > \sqrt{\rho\epsilon}$,}
	\end{array}
	\right. $$
 where $h=\max\{h_{3},h_{4}\}$.
\end{lemma}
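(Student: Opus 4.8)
The plan is to exploit the fact that at $x_{N/2}=d$ the scheme uses the five-point operator $\mathcal{L}_{t}^{N,M}$ of \eqref{five point scheme}, which is a consistent discretisation of the interface condition $[u_x](d,t)=0$, and this condition is satisfied by the exact solution because $u\in C^{1}(\Omega)$ (Lemma~2.1). First I would note that, by the very definition \eqref{five point scheme}, $\mathcal{L}_{t}^{N,M}U^{j+1}(x_{N/2})=0$, hence
\[
\mathcal{L}_{h}^{N,M}(U-u)(x_{N/2},t_{j+1})=-\,\mathcal{L}_{t}^{N,M}u(x_{N/2},t_{j+1}),
\]
so the whole task reduces to estimating the consistency error $\mathcal{L}_{t}^{N,M}u(x_{N/2},t_{j+1})$.

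The second step is a Taylor expansion carried out \emph{separately on each side of the discontinuity}, since $u$ coincides with the smooth function $u^{+}$ on $\overline{\Omega^{+}}$ and with $u^{-}$ on $\overline{\Omega^{-}}$, and since Lemma~2.4 furnishes bounded third-order $x$-derivatives of $u^{\pm}$. On $[d,d+\tau_{3}]$ the mesh is uniform with step $h_{4}$, so the forward three-point quotient expanded about $(d^{+},t_{j+1})$ gives $\big[-u(x_{N/2+2})+4u(x_{N/2+1})-3u(x_{N/2})\big]/(2h_{4})=u_{x}(d^{+},t_{j+1})+O\!\big(h_{4}^{2}\,\|u^{+}_{xxx}\|\big)$, and similarly on $[d-\tau_{2},d]$ the backward quotient equals $u_{x}(d^{-},t_{j+1})+O\!\big(h_{3}^{2}\,\|u^{-}_{xxx}\|\big)$. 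Subtracting and using $[u_{x}](d,t_{j+1})=u_{x}(d^{+},t_{j+1})-u_{x}(d^{-},t_{j+1})=0$ yields $\big|\mathcal{L}_{t}^{N,M}u(x_{N/2},t_{j+1})\big|\le C\,h^{2}\,\|u_{xxx}\|_{\Omega^{-}\cup\Omega^{+}}$ with $h=\max\{h_{3},h_{4}\}$. Inserting the a priori derivative estimate of Lemma~2.4 with $k=3$, $m=0$ (equivalently the decomposition bounds of Lemma~\ref{bounds1}–\ref{bounds2}, since the regular part contributes $O(1)$ while the layer parts dominate near $x=d$) gives $\|u_{xxx}\|\le C\epsilon^{-3/2}$ when $\sqrt{\alpha}\mu\le\sqrt{\rho\epsilon}$ and $\|u_{xxx}\|\le C(\mu/\epsilon)^{3}=C\mu^{3}\epsilon^{-3}$ when $\sqrt{\alpha}\mu>\sqrt{\rho\epsilon}$, which are exactly the two cases of the claimed bound.

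The step I expect to require the most care is reconciling this with the transformed, monotone stencil actually used at $x_{N/2}$ (the tridiagonal row with entries $r_{N/2}^{\mp},r_{N/2}^{c}$). That row equals $2h_{3}h_{4}$ times the five-point operator plus $\tfrac{h_{4}}{r_{N/2-1}^{-}}\mathcal{L}_{c}^{N,M}(\cdot)(x_{N/2-1})+\tfrac{h_{3}}{r_{N/2+1}^{+}}\mathcal{L}_{c}^{N,M}(\cdot)(x_{N/2+1})$, obtained from eliminating $U^{j+1}_{N/2\pm2}$. Applied to $U-u$, the first piece reproduces the five-point consistency error already bounded, while the multipliers $h_{4}/r_{N/2-1}^{-}$ and $h_{3}/r_{N/2+1}^{+}$ are $O(h^{3}/\epsilon)$ — here one uses $a<0$ on $\Omega^{-}$ and $a>0$ on $\Omega^{+}$, so the denominators $2\epsilon\mp h\mu a$ exceed $2\epsilon$ — and they multiply the central-scheme truncation errors at the two adjacent fine-mesh nodes, which are $O(h\,\epsilon^{-1/2}+\Delta t^{2})$ by the consistency estimates for $\mathcal{L}_{c}^{N,M}$ together with Lemma~\ref{bounds1}–\ref{bounds2}. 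Using Assumption~3.1 ($\epsilon\le N^{-1}$) one checks that all of these corrections are dominated by $Ch^{2}\epsilon^{-3/2}$ (respectively $Ch^{2}\mu^{3}\epsilon^{-3}$), so the stated bound survives the transformation; the underlying one-sided expansion and the cancellation of $[u_x]$ are the easy part.
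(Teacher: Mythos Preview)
Your proposal is correct and follows essentially the same route as the paper: both arguments recognise that at $x_{N/2}$ the consistency error of the (transformed) operator $\mathcal L_h^{N,M}$ splits into the five-point truncation $\mathcal L_t^{N,M}u$, which Taylor expansion on each side together with $[u_x](d,t)=0$ bounds by $Ch^2\|u_{xxx}\|$, plus the elimination corrections $\tfrac{2h_3^2h_4}{2\epsilon-h_3\mu a_{N/2-1}}\bigl(\mathcal L_c^{N,M}u-g\bigr)(x_{N/2-1})$ and its mirror, which are subordinate; the two parameter regimes then follow from Lemma~2.4 (equivalently Lemmas~\ref{bounds1}--\ref{bounds2}). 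If anything, your last paragraph is more explicit than the paper about why the correction terms are dominated, so no further work is needed there.
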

\begin{proof}
    For $\sqrt{\alpha} \mu\le\sqrt{\rho\epsilon} $ the  truncation error at the point $x_{\frac{N}{2}}=d$:
\begin{align*}
	\bigg\lvert \mathcal{L}_{h}^{N,M} U^{j+1}(x_{N/2})-\frac{2h_{3}^{2}g^{j+1}_{N/2-1}h_{4}}{2\epsilon-h_{3}\mu a^{j+\frac{1}{2}}_{N/2-1}}-\frac{2h_{4}^{2}g^{j+1}_{N/2+1}h_{3}}{2\epsilon+h_{4}\mu a^{j+\frac{1}{2}}_{N/2+1}}\bigg\rvert &\le\lvert\mathcal{L}_{t}^{N,M} U^{j+1}(x_{N/2})\rvert+\frac{2h_{3}^{2} h_{4}}{2\epsilon-h_{3}\mu a^{j+\frac{1}{2}}_{N/2-1}}\\&\lvert\mathcal{L}_{c}^{N,M} u(x_{N/2-1},t_{j+1})-g^{j+1}_{N/2-1}\rvert +\frac{2h_{4}^{2}h_{3}}{2\epsilon+h_{4}\mu a^{j+\frac{1}{2}}_{N/2+1}}\\&\lvert\mathcal{L}_{c}^{N,M} u(x_{N/2+1},t_{j+1})-g^{j+1}_{N/2+1}\rvert\\&\le Ch^{2}\|u_{xxx}\| \\&\le\frac{Ch^2}{\epsilon^{3/2}}
	\end{align*}
 For $\sqrt{\alpha} \mu>\sqrt{\rho\epsilon} $, the  truncation error at the point $x_{\frac{N}{2}}=\frac{d}{2}$:
 \begin{align*}
	\bigg\lvert \mathcal{L}_{h}^{N,M} U^{j+1}(x_{N/2})-\frac{2h_{3}^{2}g^{j+\frac{1}{2}}_{N/2-1}h_{4}}{2\epsilon-h_{3}\mu a^{j+\frac{1}{2}}_{N/2-1}}-\frac{2h_{4}^{2}g^{j+\frac{1}{2}}_{N/2+1}h_{3}}{2\epsilon+h_{4}\mu a^{j+\frac{1}{2}}_{N/2+1}}\bigg\rvert &\le\lvert\mathcal{L}_{t}^{N,M} U^{j+1}(x_{N/2})\rvert+\frac{2h_{3}^{2} h_{4}}{2\epsilon-h_{3}\mu a^{j+\frac{1}{2}}_{N/2-1}}\\&\lvert \mathcal{L}_{c}^{N,M} u(x_{N/2-1},t_{j+1})-g^{j+1}_{N/2-1}\rvert +\frac{2h_{4}^{2}h_{3}}{2\epsilon+h_{4}\mu a^{j+\frac{1}{2}}_{N/2+1}}\\&\lvert\mathcal{L}_{c}^{N,M} u(x_{N/2+1},t_{j+1})-g^{j+1}_{N/2+1}\rvert\\&\le Ch^{2}\|u_{xxx}\|  \\&\le\frac{Ch^2\mu^3}{\epsilon^3}
	\end{align*}

\end{proof}
\begin{theorem}
	\label{main}
    Let $u(x,t)$ and $ U(x,t)$ be the solutions of \eqref{twoparaparabolic} and \eqref{discrete problem} respectively, then
	$$\|U-u\|_{\bar{\Omega}^{N,M}}\le
	C(N^{-2}\ln N^3+\Delta t^{2})$$
	where C is a constant independent of $\epsilon, \mu$ and discretization parameter $N,M$. 
\end{theorem}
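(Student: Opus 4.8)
The plan is to reduce the global estimate to the component-wise bounds already proved and then glue them together with a discrete comparison argument. Using the decompositions $u=v+w_l+w_r$ of the exact solution and $U=V+W_l+W_r$ of the discrete solution defined through \eqref{LV}, \eqref{LW-}, \eqref{RW-}, the nodal error at any $(x_i,t_{j+1})\in\Omega^{N-}\cup\Omega^{N+}$ obeys
\[
|(U-u)(x_i,t_{j+1})|\le|(V-v)(x_i,t_{j+1})|+|(W_l-w_l)(x_i,t_{j+1})|+|(W_r-w_r)(x_i,t_{j+1})|.
\]
First I would apply Lemma \ref{regular part} to the regular part, giving $C(N^{-2}+\Delta t^2)$, and Lemmas \ref{left layer} and \ref{right layer} to the two singular parts, giving $C(N^{-2}(\ln N)^3+\Delta t^2)$ in the regime $\sqrt{\alpha}\mu>\sqrt{\rho\epsilon}$ and $C((N^{-1}\ln N)^2+\Delta t^2)$ in the complementary regime. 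Since $(N^{-1}\ln N)^2\le N^{-2}(\ln N)^3$, in either regime the interior nodal error of $U-u$ is $O(N^{-2}(\ln N)^3+\Delta t^2)$; equivalently, $|\mathcal{L}_h^{N,M}(U-u)(x_i,t_{j+1})|$ is controlled by the corresponding truncation bounds assembled from the arbitrary/uniform-mesh estimates stated before Lemma \ref{regular part}.

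Next I would handle the discontinuity node $x_{N/2}=d$ separately, because there $\mathcal{L}_h^{N,M}$ is the transformed five-point operator $\mathcal{L}_t^{N,M}$ with row entries $r_{N/2}^{-},r_{N/2}^{c},r_{N/2}^{+}$ of size $O(h)$, $h=\max\{h_3,h_4\}$. The discontinuity-point estimate proved just above gives $|\mathcal{L}_h^{N,M}(U-u)(x_{N/2},t_{j+1})|\le Ch^2\epsilon^{-3/2}$ (respectively $Ch^2\mu^3\epsilon^{-3}$); substituting the Shishkin widths $h_3=8\tau_2/N$, $h_4=8\tau_3/N$ with $\tau_2,\tau_3=O(\theta_1^{-1}\ln N)$ and $\theta_1$ taken from \eqref{theta1} (respectively \eqref{theta2}) absorbs the negative powers of $\epsilon$ (respectively of $\epsilon/\mu$) hidden in $h$ and leaves a defect of size $O(N^{-2}(\ln N)^2)$ at $d$, which is dominated by the global target.

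Finally I would close the argument by a barrier function. Put $\psi^{j+1}(x_i)=C(N^{-2}(\ln N)^3+\Delta t^2)\pm(U-u)(x_i,t_{j+1})$ with $C$ large. Then $\psi^{j+1}\ge0$ on $\Gamma_c^{N}$ and $\psi^{0}\ge0$, while the interior defect bounds of the three components together with the discontinuity-node defect bound yield $\mathcal{L}_h^{N,M}\psi^{j+1}(x_i)\le0$ on $\Omega^{N-}\cup\Omega^{N+}$ and $\mathcal{L}_t^{N,M}\psi^{j+1}(x_{N/2})\le0$. Because condition \eqref{condition} guarantees that $\mathcal{L}_h^{N,M}$ is the negative of an $M$-matrix and hence satisfies the discrete minimum principle, we get $\psi^{j+1}(x_i)\ge0$ for all $(x_i,t_{j+1})\in\bar{\Omega}^{N,M}$, i.e.\ $\|U-u\|_{\bar{\Omega}^{N,M}}\le C(N^{-2}(\ln N)^3+\Delta t^2)$, uniformly in $\epsilon,\mu$. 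The step I expect to be the main obstacle is precisely the discontinuity node: one must check carefully that the $\epsilon^{-3/2}$ (respectively $\mu^3\epsilon^{-3}$) factor in its truncation error is genuinely cancelled by the fine-mesh widths $h_3,h_4$ — this is where the $O(h)$ scaling of the row at $d$ is essential, so that the comparison argument does not reintroduce an extra $\epsilon^{-1/2}$ or $\mu\epsilon^{-1}$ — and that $\psi^{j+1}$ is still a valid barrier across $d$ for the non-standard five-point stencil.
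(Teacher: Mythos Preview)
Your decomposition step is fine and matches the paper: Lemmas \ref{regular part}, \ref{left layer}, \ref{right layer} already give $|(U-u)(x_i,t_{j+1})|\le C(N^{-2}(\ln N)^3+\Delta t^2)$ for every node in $\Omega^{N-}\cup\Omega^{N+}$, so the only node left is $x_{N/2}=d$.

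The gap is precisely at $d$, and it is the point you yourself flag as the obstacle. Your arithmetic there is wrong. In the regime $\sqrt{\alpha}\mu\le\sqrt{\rho\epsilon}$ one has $h_3,h_4\sim\sqrt{\epsilon}\,N^{-1}\ln N$, hence
\[
\frac{h^2}{\epsilon^{3/2}}\;\sim\;\frac{\epsilon\,(\ln N)^2}{N^2\,\epsilon^{3/2}}\;=\;\frac{(\ln N)^2}{N^2\sqrt{\epsilon}},
\]
which is \emph{not} $O(N^{-2}(\ln N)^2)$ uniformly in $\epsilon$; a factor $\epsilon^{-1/2}$ survives. In the other regime $h\sim(\epsilon/\mu)N^{-1}\ln N$ and $h^2\mu^3/\epsilon^3\sim\mu\epsilon^{-1}N^{-2}(\ln N)^2$, again not uniform. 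Moreover your constant barrier cannot help here: the original five--point operator $\mathcal{L}_t^{N,M}$ annihilates constants, so $\mathcal{L}_t^{N,M}\big(C(N^{-2}(\ln N)^3+\Delta t^2)\big)=0$ and there is nothing to dominate the defect $Ch^2\epsilon^{-3/2}$. Invoking the ``$O(h)$ scaling of the row'' does not rescue this either, since after the tridiagonal transformation the row sum is only of order $h^3(b+2/\Delta t)/\epsilon$, far too small to offset $h^2/\epsilon^{3/2}$ with a constant of size $N^{-2}(\ln N)^3$.

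What the paper does instead is work on the small window $(d-\tau_2,d+\tau_3)$ and add to the constant a piecewise linear ``tent'' centred at $d$ with coefficient $Ch^2/\epsilon^{3/2}$ (respectively $Ch^2\mu^3/\epsilon^3$). Because $\mathcal{L}_t^{N,M}$ approximates the jump in $u_x$, it returns a fixed negative value on the tent, producing exactly the $-Ch^2/\epsilon^{3/2}$ needed at $d$. The maximum of this extra term is $Ch^2\tau_2/\epsilon^{3/2}$, and it is the additional factor $\tau_2\sim\sqrt{\epsilon}\,\ln N$ (respectively $\tau_2\sim(\epsilon/\mu)\ln N$) that absorbs the leftover $\epsilon^{-1/2}$ (respectively $\mu/\epsilon$) and lands you at $CN^{-2}(\ln N)^3$. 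The boundary values of the localized barrier at $d-\tau_2$ and $d+\tau_3$ are supplied by the interior bound already established. Replace your global constant barrier by this localized tent barrier and the argument closes.
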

\begin{proof}
From Lemma (\ref{regular part}), Lemma (\ref{left layer}), and Lemma (\ref{right layer}), we have that
	 	$$\displaystyle \|U-u\|_{\Omega^{N-}\cup\Omega^{N+}} \le \left\{
	\begin{array}{ll}
	\displaystyle C(N^{-2} (\ln N)^{2}+\Delta t^2), & \hbox{ $\sqrt{\alpha}\mu \le \sqrt{\rho\epsilon}$, } \vspace{.5mm}\\
\displaystyle	C(N^{-2} (\ln N)^{3}+\Delta t^2), & \hbox{ $\sqrt{\alpha}\mu > \sqrt{\rho\epsilon}$.}
	\end{array}
	\right. $$
  Let $\sqrt{\alpha}\mu \le \sqrt{\rho\epsilon},$ to find error at the point of discontinuity $x_{\frac{N}{2}}=d$, we have considered the discrete barrier function $\phi_{1}(x_{i},t_{j+1})=\psi_{1}(x_{i},t_{j+1})\pm e(x_{i},t_{j+1})$ defined in the interval $(d-\tau_{2},d+\tau_{3})\times(0,T]$, where
	 $$\psi_{1}(x_{i},t_{j+1})=C(N^{-2}(\ln N)^{3}+\Delta t^2)+\frac{C h^2}{\epsilon^{3/2}}\left\{
	 \begin{array}{ll}
	 	\displaystyle x_{i}-(d+\tau_{2}), & \hbox{ $(x_{i},t_{j+1})\in \Omega^{N,M}\cap(d-\tau_{2}, d)\times(0,T]$, }\\
	 	d-\tau_{3}-x_{i}, & \hbox{ $(x_{i},t_{j+1})\in\Omega^{N,M}\cap(d, d+\tau_{3})\times(0,T].$}
	 \end{array}
	 \right. $$
	 For $ x_{i}\in(d-\tau_{2},d+\tau_{3}), \; \phi_{1}(d-\tau_{2},t_{j+1})$, $\phi_{1}(d+\tau_{3},t_{j+1}),\phi_{1}(x_{i},t_{0})$ are non-negative and
	 $\mathcal{L}_{h}^{N,M}\phi_{1}(x_{i},t_{j+1})\le 0,~~\forall(x_{i},t_{j+1})\in( d-\sigma_2, d+\sigma_3)\times(0,T].$ Also~$ \mathcal{L}_{t}^{N,M}\phi_{1}(x_{N/2},t_{j+1}) \le0.$\\
  Hence, by applying discrete minimum principle we get $\phi_{1}(x_{i},t_{j+1})\ge0.$\\
	Therefore, for $(x_{i},t_{j+1})\in (d-\tau_{2},d+\tau_{3})\times(0,T]:$
	\begin{equation}
	\label{Yd}
	\lvert (U-u)(x_{i},t_{j+1})\rvert \le C_{1}(N^{-2}(\ln N)^3+\Delta t^2)+\frac{C_{2}h^{2}\sigma}{\epsilon^{3/2}}\le C(N^{-2}(\ln N)^3+\Delta t^2).
	\end{equation}
 In second case $\sqrt{\alpha}\mu > \sqrt{\rho\epsilon}$, consider the discrete barrier function $\phi_{2}(x_{i},t_{j+1})=\psi_{2}(x_{i},t_{j+1})\pm e(x_{i},t_{j+1})$ defined in the interval $(d-\tau_{2},d+\tau_{3})\times(0,T]$, where
	$$\psi_{2}(x_{i},t_{j+1})=C(N^{-2}(\ln N)^{3}+\Delta t^2)+C_{1}\frac{h^2\mu^3}{\epsilon^3}\left\{
	\begin{array}{ll}
	\displaystyle x_{i}-(d+\tau_{2}), & \hbox{ $(x_{i},t_{j+1})\in \Omega^{N,M}\cap(d-\tau_{2}, d)\times(0,T] $, }\\
	d-\tau_{3}-x_{i}, & \hbox{ $(x_{i},t_{j+1})\in\Omega^{N,M}\cap(d, d+\tau_{3})\times(0,T].$}
	\end{array}
	\right.$$
	We have $\phi_{2}(d-\tau_{2},t_{j+1})$ and $\phi_{2}(d+\tau_{3},t_{j+1}),\phi_{2}(x_{i},t_{0}),~x_{i}\in(d-\tau_{2},d+\tau_{3}) $ are non negative and
	 $\mathcal{L}_{h}^{N,M}\phi_{2}(x_{i},t_{j+1})\le 0,~~\forall(x_{i},t_{j+1})\in(d-\tau_{2},d+\tau_{3})\times(0,T]$.~ Also $\mathcal{L}_{t}^{N,M}\phi_{2}(x_{i},t_{j+1})\le 0.$\\
	Hence by applying discrete minimum principle, we get $\phi_{2}(x_{i},t_{j+1})\ge0.$
	Therefore, for $(x_{i},t_{j+1})\in (d-\tau_{2},d+\tau_{3})\times(0,T]$
	\begin{equation}
	\label{Yd1}
	\lvert (U-u)(x_{i},t_{j+1})\rvert \le C(N^{-2}(\ln N)^3+\Delta t^2).
	\end{equation}
	By combining the result (\ref{Yd}) and (\ref{Yd1}) we obtain the desired result.
\end{proof}
\section{Numerical examples} 
To demonstrate the effectiveness of the proposed hybrid difference approach, we proposed numerical method  on two test problems with discontinuous convection coefficients and source terms. As the exact solutions to these problems are unknown, we used the double mesh method \cite{DV,D} to evaluate the accuracy of the numerical approximations obtained. The double mesh difference is defined by
$$E^{N,M}= \max\limits_{j}\bigg(\max\limits_{i}|U_{2i,2j}^{2N,2M}-U_{i,j}^{N,M}|\bigg)$$
where $U_{i}^{N,M}$ and $U_{2i}^{2N,2M}$ are the solutions on the mesh $\bar{\Omega}^{N,M}$ and $\bar{\Omega}^{2N,2M}$ respectively. 
The order of convergence is given by
$$R^{N,M}=\log_{2}\bigg(\frac{E^{N,M}}{E_{\epsilon,\mu}^{2N,2M}}\bigg).$$
\begin{Example}\label{ex-a}
	$$(\epsilon u_{xx}+\mu au_{x}-bu-u_{t})(x,t)=f(x,t),~~ ~~(x,t)\in((0,.5)\cup(0.5,1))\times(0,1],$$
	$$	u(0,t)=u(1,t)=u(x,0)=0,$$\nonumber\
	with \\
	$$a(x,t)=\left\{
	\begin{array}{ll}
	\displaystyle -(1+x(1-x)), & \hbox{ $0\le x\le 0.5,t\in(0,1]$, }\\
	1+x(1-x), & \hbox{ $0.5<x\le1,t\in(0,1]$,}
	\end{array}
	\right.$$
	$$
	f(x,t)=\left\{
	\begin{array}{ll}
	\displaystyle -2(1+x^2)t, & \hbox{ $0\le x\le 0.5,t\in(0,1]$, }\\
	2(1+x^2)t, & \hbox{ $0.5<x\le1,t\in(0,1]$,}
	\end{array}
	\right.$$\\
	and $b(x,t)=1+exp(x)$.
\end{Example}
\begin{Example}\label{ex-b}
	\begin{eqnarray*}
 (\epsilon u_{xx}&+&\mu au_{x}-bu-u_{t})(x,t)=f(x,t),~~ ~~(x,t)\in ((0,0.5))\cup (0.5,1))\times(0,1],\\
	u(0,t)&=&u(1,t)=u(x,0)=0,
 \end{eqnarray*}
	with
 \[a(x,t)=\left\{
	\begin{array}{ll}
	\displaystyle -(1+x(1-x)), & 0\le x\le 0.5,t\in(0,1], \\
	1+x(1-x), & 0.5<x\le1,t\in(0,1],
	\end{array}
	\right. \]
	\[
	f(x,t)=\left\{
	\begin{array}{ll}
	\displaystyle -2(1+x^2)t, &  0\le x\le0.5,t\in(0,1], \\
	3(1+x^2)t, & 0.5<x\le1,t\in(0,1],
	\end{array}
	\right.\] with 
	and $b(x,t)=1+\exp(x)$.
\end{Example}
\begin{Example}\label{ex-c}
	$$(\epsilon u_{xx}+\mu au_{x}-bu-u_{t})(x,t)=f(x,t),~~ ~~(x,t)\in((0,.5)\cup(0.5,1))\times(0,1],$$
	$$	u(0,t)=u(1,t)=u(x,0)=0,$$\nonumber\
	with \\
	$$a(x,t)=\left\{
	\begin{array}{ll}
	\displaystyle -(1+\exp(-xt)), & \hbox{ $0\le x\le 0.5,t\in(0,1]$, }\\
	2+x+t, & \hbox{ $0.5<x\le1,t\in(0,1]$,}
	\end{array}
	\right. $$	
	$$
	f(x,t)=\left\{
	\begin{array}{ll}
	\displaystyle (\exp(t^{2})-1)(1+xt), & \hbox{ $0\le x\le 0.5,t\in(0,1]$, }\\
	-(2+x)t^{2}, & \hbox{ $0.5<x\le1,t\in(0,1]$,}
	\end{array}
	\right.$$\\
	and $b(x,t)=2+xt$.
\end{Example}
\begin{table}
\small
\begin{center}
\caption{Maximum point-wise error $E^{N,M}$ and approximate orders of convergence $R^{N,M}$ for Example \ref{ex-a} when $\epsilon=2^{-6}.$}\label{eg11}%
\begin{tabular}{ccccccc}
\hline
\multirow{2}{*}{$\mu$ }& \multicolumn{6}{c}{Number of mesh points N}\\
\cline{2-7}
 & 32   & 64 &128 & 256&512&1024\\
\hline
		$2^{-8}$ & 4.17e-03&1.41e-03&	6.13e-04&	2.94e-04&	1.45e-04&6.99e-05 \\ 
		Order &1.5623&	1.2055&	1.0627&	1.0181&1.0526& \\

		$2^{-10}$ &3.32e-03&	9.02e-04	&2.45e-04&	8.45e-05&	3.88e-05& 1.93e-05\\ 
		Order & 1.8769& 1.8775&	1.5374&	1.1253&1.0074& \\
		$2^{-12}$ &3.24e-03	&8.65e-04&	2.26e-04&	5.92e-05&	1.58e-05 &3.98e-06\\ 
		Order & 1.9038&	1.9330&	1.9357&	1.9099&1.9891 \\
  
		$2^{-14}$ & 3.22e-03&	8.56e-04	&2.22e-04&	5.68e-05&	1.45e-05&3.66e-06\\ 
		
		Order & 1.9108&	1.9481&	1.9657&	1.9672&1.9861& \\
		
		$2^{-16}$ &3.22e-03&	8.54e-04	&2.21e-04&	5.62e-05	&1.42e-05&3.55e-06\\
		Order & 1.9902&	1.9972&	1.9991&	1.9995&1.9991 \\
		$2^{-18}$ &3.21e-03&	8.53e-04&	2.20e-04	&5.60e-05&	1.41e-05&3.55e-06\\
		Order & 1.9131&	1.9529&	1.9755&	1.9867&1.9898 \\
		$2^{-20}$ &3.21e-03&	8.53e-04&	2.20e-04	&5.60e-05&	1.41e-05&3.53e-06\\
		Order & 1.9132& 1.9532&	1.9753&	1.9867&1.9979 \\
  $2^{-22}$ &3.21e-03&	8.53e-04	&2.20e-04	&5.60e-05&	1.41e-05&3.53e-06\\
		Order & 1.9132&	1.9532&	1.9761&	1.9879& 1.9979\\
  $2^{-24}$ &3.21e-03&	8.53e-04	&2.20e-04&	5.60e-05&	1.41e-05&3.53e-06\\
		Order & 1.9132&	1.9532&	1.9761&	1.9880&1.9979 \\
  \vdots &\vdots&\vdots&\vdots&\vdots&\vdots&\vdots\\
  $2^{-60}$ &3.21e-03&	8.53e-04&	2.20e-04	&5.60e-05	&1.41e-05&3.53e-06\\
		Order & 1.9132	&1.9532&	1.9761&	1.9880&1.9979 \\
\hline
\end{tabular}
\end{center}
\end{table}
\begin{table}
\small
\begin{center}
\caption{Maximum point-wise error $E^{N,M}$ and approximate orders of convergence $R^{N,M}$ for Example \ref{ex-a} when $\mu=2^{-6}.$}\label{eg12}%
\begin{tabular}{cccccc}
\hline
\multirow{2}{*}{$\epsilon$ }& \multicolumn{5}{c}{Number of mesh points N}\\
\cline{2-6}
  & 64 &128 & 256&512&1024\\
\hline 
		$2^{-16}$ & 2.32e-01&	9.41e-02	&2.52e-02	&5.63e-03&	1.67e-03 \\ 
		Order &1.3012&	1.9025&	2.1589&	1.7513& \\

		$2^{-20}$ & 2.31e-01&	1.03e-01&	2.43e-02&	5.57e-03&	1.46e-03\\ 
		Order & 1.1738&	2.0757&	2.0678&	1.9272& \\
  
		
		
		$2^{-24}$ &2.31e-01&	1.03e-01&2.41e-02&	5.48e-03&	1.35e-03\\
		Order & 1.1643& 2.0955&	2.1374&	2.0222& \\
		$2^{-28}$ &2.31e-01&	1.03e-01&	2.41e-02&	5.47e-03&	1.34e-03\\
		Order & 1.1638&	2.0967&	2.1415&	2.0329& \\
  $2^{-32}$ &2.31e-01&	1.03e-01&	 2.41e-02& 5.46e-03&  	1.33e-03\\
		Order & 1.1637&	2.0969&	2.1424&	2.0330& \\
  $2^{-36}$ &2.31e-01&1.03e-01& 2.41e-02&	5.46e-03&  	1.33e-3\\
		Order & 1.1637&	2.0969&	2.1424&	2.0331& \\
  $2^{-40}$ &2.31e-01&	1.03e-01&	2.41e-02&	5.47e-03&	1.34e-03\\
		Order & 1.1637&	2.0969&	2.1424&	2.0331& \\
\hline
\end{tabular}
\end{center}
\end{table}
\begin{table}
\small
\begin{center}
\caption{Maximum point-wise error $E^{N,M}$ and approximate orders of convergence $R^{N,M}$ for Example \ref{ex-b} when $\mu=2^{-6}.$}\label{eg14}%
\begin{tabular}{cccccc}
\hline
\multirow{2}{*}{$\epsilon$ }& \multicolumn{5}{c}{Number of mesh points N}\\
\cline{2-6}
  & 64 &128 & 256&512&1024\\
\hline 
	$2^{-18}$ & 3.31e-01&1.51e-01&	3.72e-02&9.95e-03&	2.66e-03\\ 
		Order & 1.1321& 2.0226&	1.9015&	1.9011& \\
  
	$2^{-22}$ & 3.55e-01&	1.55e-01&	3.63e-02&	9.75e-03& 2.55e-03\\ 
		Order & 1.1982&	2.0914&	1.8940&	1.9330& \\
$2^{-26}$ & 3.55e-01&	1.55e-01&	3.63e-02&	9.75e-03& 2.55e-03\\ 
		Order & 1.1982&	2.0914&	1.8940&	1.9330& \\
  $2^{-30}$ & 3.55e-01&	1.55e-01&	3.63e-02&	9.75e-03& 2.55e-03\\ 
		Order & 1.1982&	2.0914&	1.8940&	1.9330& \\
  $2^{-34}$ & 3.55e-01&	1.55e-01&	3.63e-02&	9.75e-03& 2.55e-03\\ 
		Order & 1.1982&	2.0914&	1.8940&	1.9330& \\
  $2^{-38}$ & 3.55e-01&	1.55e-01&	3.63e-02&	9.75e-03& 2.55e-03\\ 
		Order & 1.1982&	2.0914&	1.8940&	1.9330& \\ 
  $2^{-42}$ & 3.55e-01&	1.55e-01&	3.63e-02&	9.75e-03& 2.55e-03\\ 
		Order & 1.1982&	2.0914&	1.8940&	1.9330& \\

\hline
\end{tabular}
\end{center}
\end{table}
\begin{table}
\small
\begin{center}
\caption{Maximum point-wise error $E^{N,M}$ and approximate orders of convergence $R^{N,M}$ for Example \ref{ex-c} when $\epsilon=2^{-8}.$}\label{eg13}%
\begin{tabular}{ccccccc}
\hline
\multirow{2}{*}{$\mu$ }& \multicolumn{6}{c}{Number of mesh points N}\\
\cline{2-7}
 & 32   & 64 &128 & 256&512&1024\\
\hline
		$2^{-14}$ &7.74e-03& 2.08e-03&	5.31e-04&	1.34e-04	&4.79e-05	&2.19e-05 \\ 
		Order & 1.8947&	1.9704&	1.9834&	1.4871&	1.1292& \\
		$2^{-16}$ &7.83e-03&2.14e-03&	5.62e-04&	1.34e-04&	3.36e-05&8.49e-06	 \\ 
		Order &1.8692&	1.9572&	1.9654&	1.9974&1.9858 \\
  
		$2^{-20}$ & 7.86e-03&	2.17e-03&	5.69e-04&	1.45e-04	&3.85e-05	&9.72e-06\\ 
		
		Order & 1.9108&	1.9481&	1.9657&	1.9672&1.9868& \\
		
		$2^{-24}$ &7.86e-03&	2.17e-03&	5.69e-04&	1.45e-04&	3.85e-05&	9.72e-06\\
		Order & 1.9902&	1.9972&	1.9991&	1.9995&1.9872& \\
  $2^{-28}$ &7.86e-03&	2.17e-03&	5.69e-04&	1.46e-04&	3.73e-05&	9.43e-06\\
		Order & 1.8592&	1.9299&	1.9605&	1.9694&	1.9845& \\
  $2^{-32}$ &7.86e-03&	2.17e-03&	5.69e-04&	1.46e-04&	3.73e-05&	9.43e-06\\
		Order & 1.8592&	1.9299&	1.9605&	1.9694&	1.9845& \\
  \vdots &\vdots&\vdots&\vdots&\vdots&\vdots&\vdots\\
  $2^{-60}$ &7.86e-03&	2.1e-03&	5.69e-04&	1.46e-04	&3.73e-05&	9.43e-06\\
		Order & 1.8592&	1.9299&	1.9605&	1.9694&	1.9845& \\
\hline
\end{tabular}
\end{center}
\end{table}
\begin{figure}[h]
\centering
\includegraphics[width=0.6\textwidth]{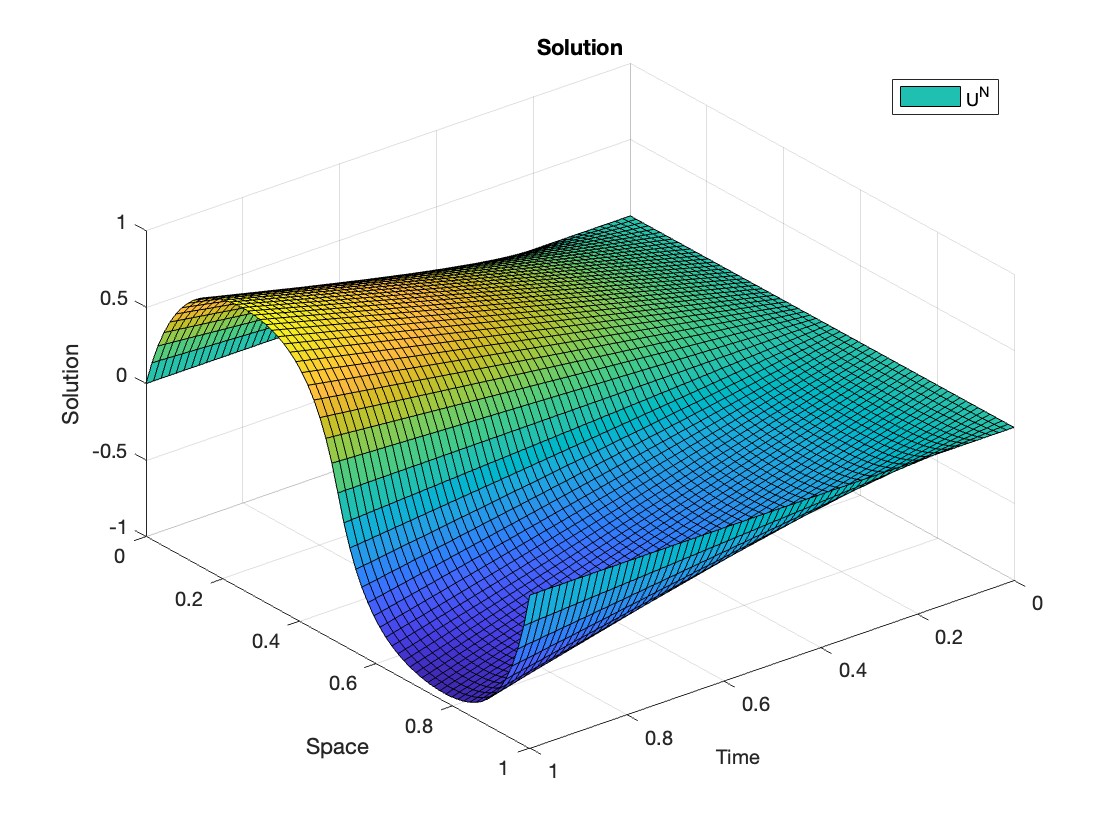}
\caption{Numerical solution for $\epsilon=2^{-6}, \mu=2^{-32}$} when $N=64$ for Example \ref{ex-a}.	
\label{fig11}
\end{figure}
\begin{figure}[h]
\centering
\includegraphics[width=0.6\textwidth]{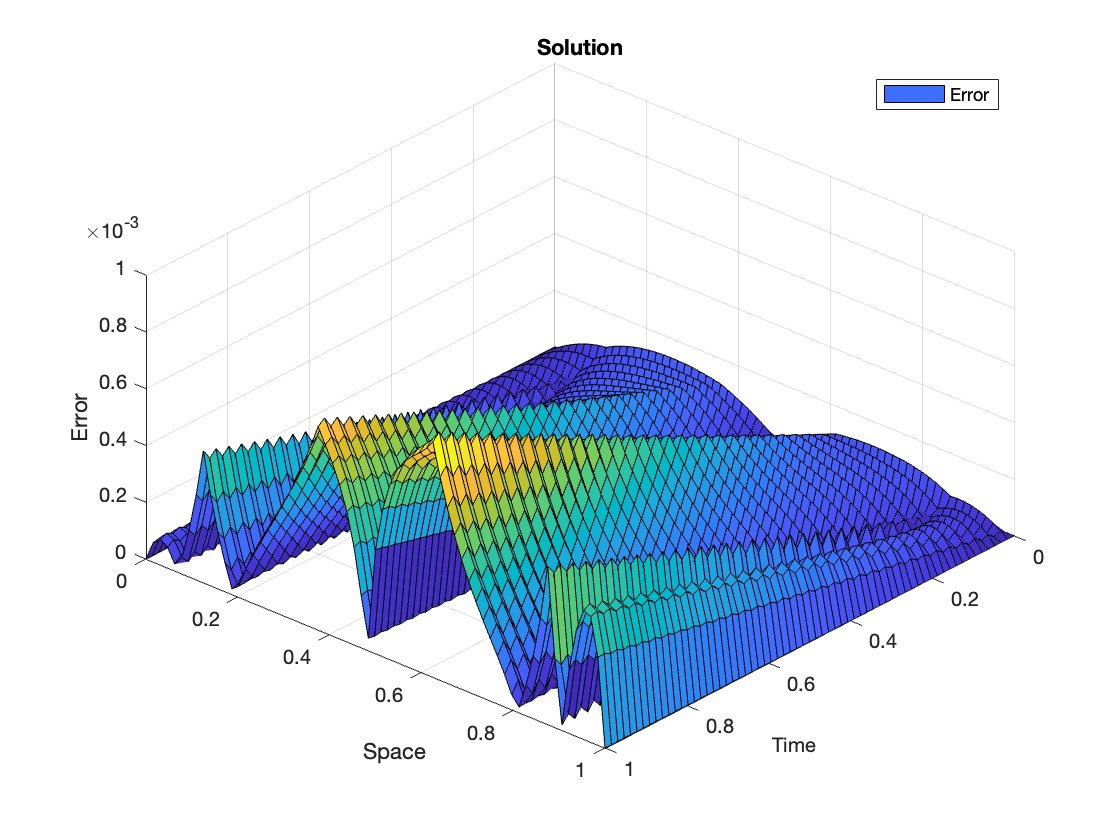}
\caption{Error for $\epsilon=2^{-6}, \mu=2^{-32}$} when $N=64$ for Example \ref{ex-a}.	
\label{fig12}
\end{figure}

\begin{figure}[h]
\centering
\includegraphics[width=0.6\textwidth]{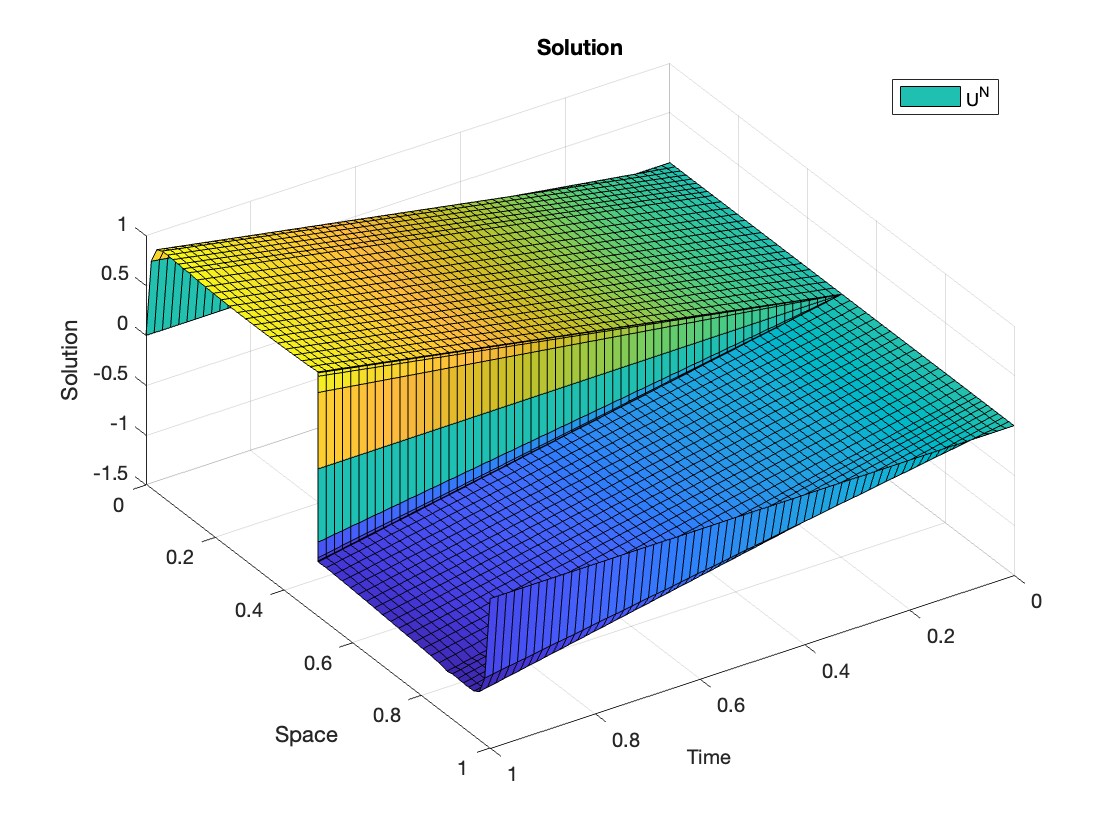}
\caption{Numerical solution for $\epsilon=2^{-30}, \mu=2^{-6}$} when $N=64$ for Example \ref{ex-a}.	
\label{fig22}
\end{figure}
\begin{figure}[h]
\centering
\includegraphics[width=0.6\textwidth]{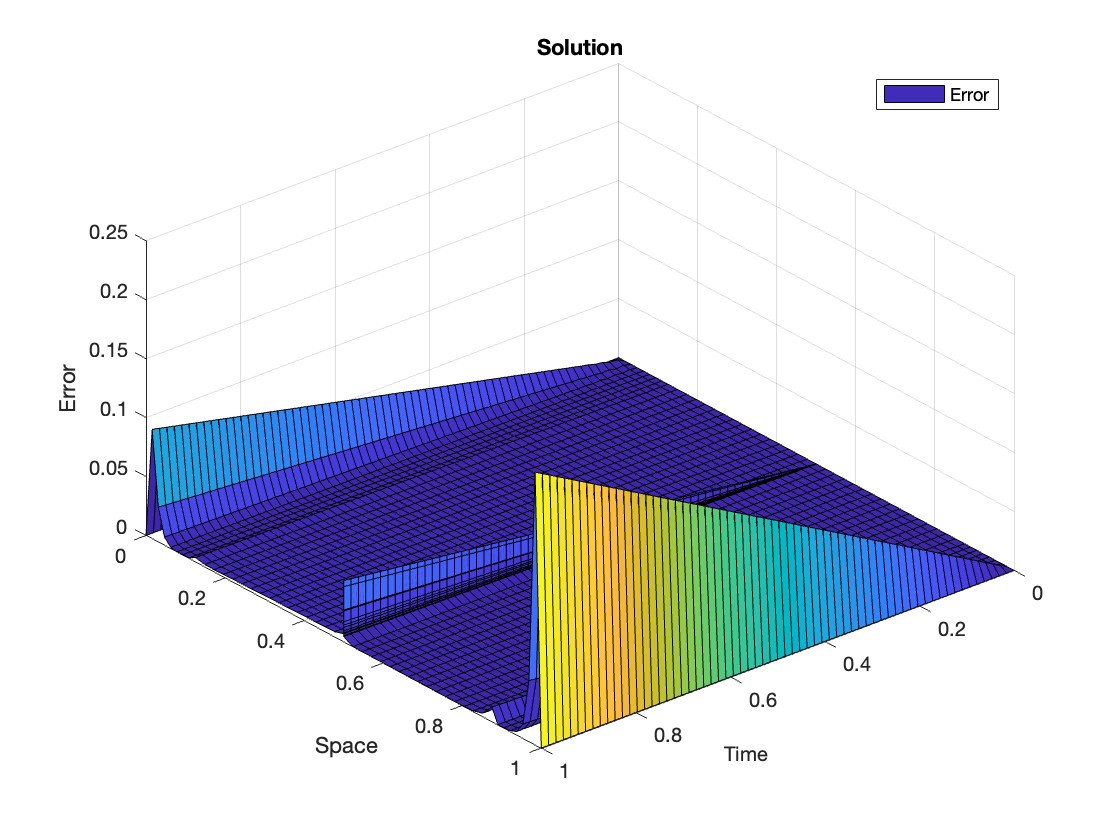}
\caption{Error for $\epsilon=2^{-30}, \mu=2^{-6}$} when $N=64$ for Example \ref{ex-a}.	
\label{fig23}
\end{figure}

\begin{figure}[h]
\centering
\includegraphics[width=0.6\textwidth]{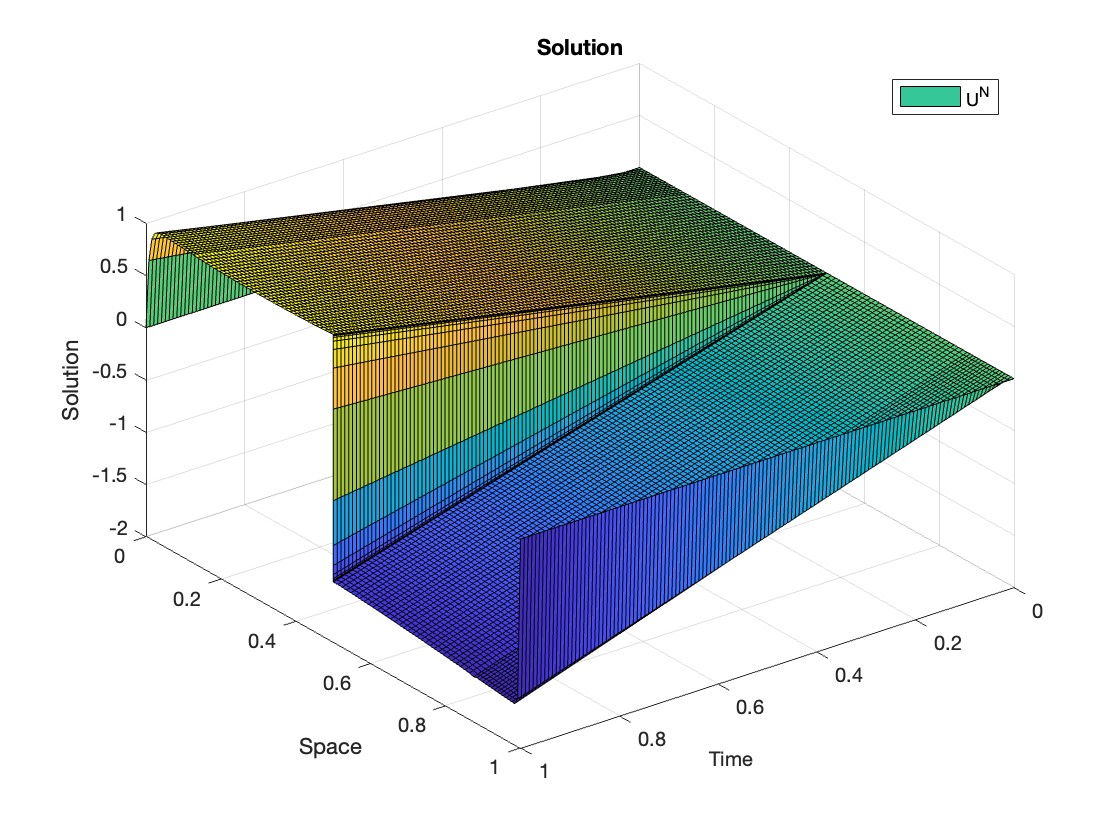}
\caption{Numerical solution for $\epsilon=2^{-22}, \mu=2^{-6}$} when $N=128$ for Example \ref{ex-b}.	
\label{fig31}
\end{figure}
\begin{figure}[h]
\centering
\includegraphics[width=0.6\textwidth]{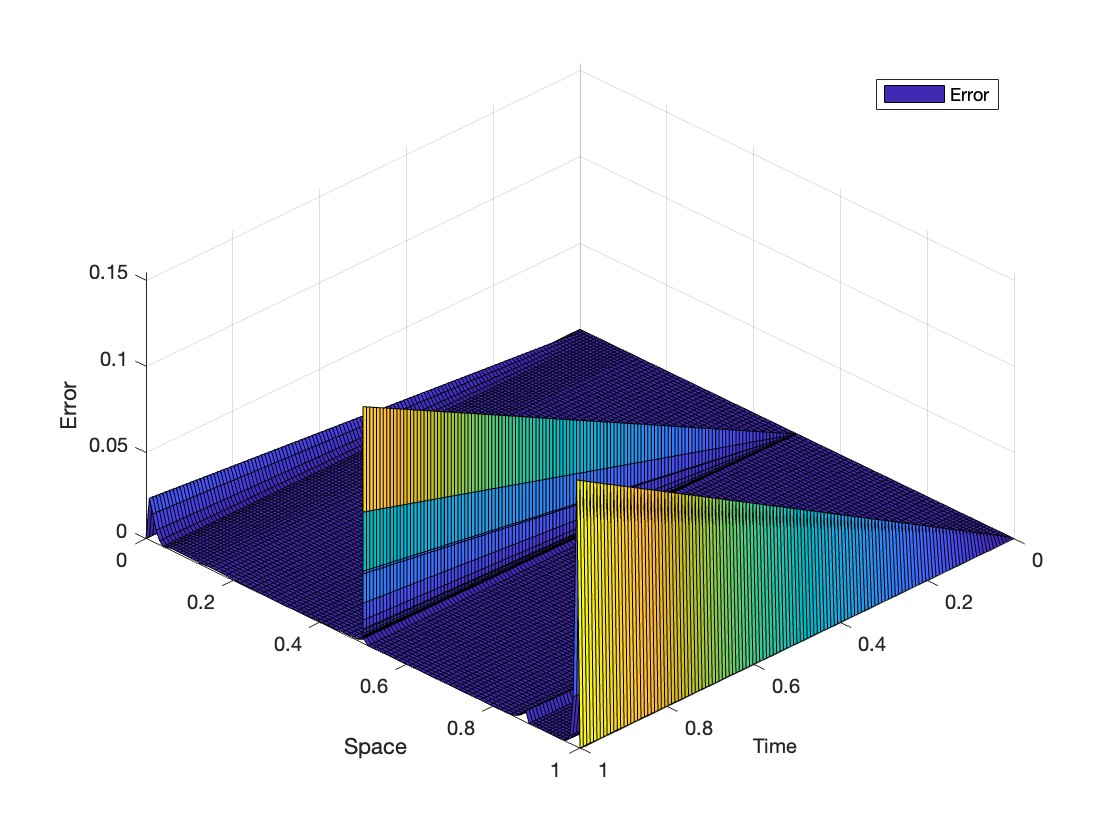}
\caption{Error for $\epsilon=2^{-22}, \mu=2^{-6}$} when $N=128$ for Example \ref{ex-b}.	
\label{fig32}
\end{figure}

\begin{figure}[h]
\centering
\includegraphics[width=0.6\textwidth]{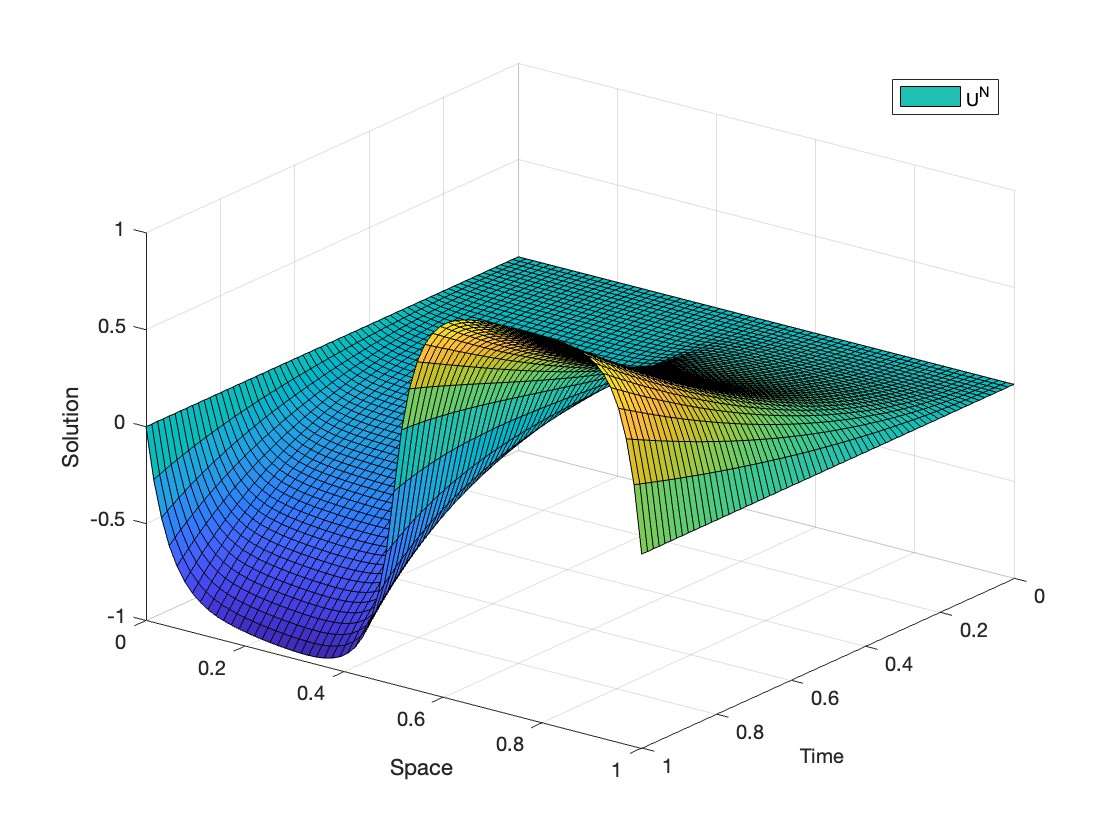}
\caption{Numerical solution for $\epsilon=2^{-8}, \mu=2^{-36}$} when $N=64$ for Example \ref{ex-c}.	
\label{fig33}
\end{figure}
\begin{figure}[h]
\centering
\includegraphics[width=0.6\textwidth]{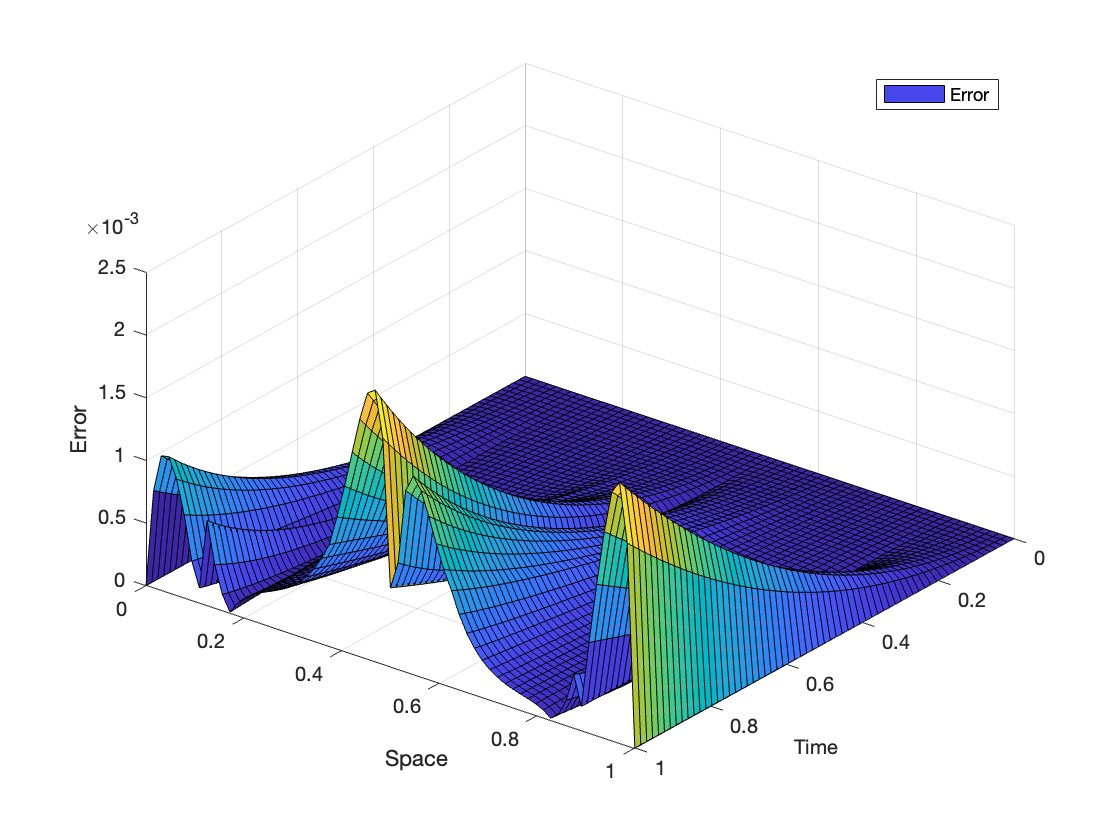}
\caption{Error for $\epsilon=2^{-8}, \mu=2^{-36}$} when $N=64$ for Example \ref{ex-c}.	
\label{fig34}
\end{figure}
The table \eqref{eg11} tabulates the error and numerical order of convergence for the example \eqref{ex-a} for different values of $ \mu $ when  $\epsilon=2^{-6}$. Table \eqref{eg12} presents the numerical results for different values of $\epsilon$ and $\mu=2^{-6}$ for the example \eqref{ex-a}.  
The Figure \eqref{fig11} and \eqref{fig12}  represents the surface plot of numerical solution and error graph of example \eqref{ex-a}  respectively for $\mu=2^{-32}$, $\epsilon=2^{-6}$ and $N=64$. We notice that the maximum error is at the point of discontinuity. Figure \eqref{fig22} and \eqref{fig23} show the numerical solution and error graph for example  \eqref{ex-a} respectively when  $\epsilon=2^{-30}$, $\mu=2^{-6}$ and $N=64$. Here too the maximum error occurs at the point of discontinuity.
In Figure \eqref{fig31} and \eqref{fig32}, the numerical solution and error graph for example  \eqref{ex-b} respectively when  $\epsilon=2^{-22}$, $\mu=2^{-6}$ and $N=128$ is presented. Here the maximum error occurs at the boundaries near $x=1$.
Similarly, The table \eqref{eg13} tabulates the error and numerical order of convergence for the example \eqref{ex-c} for different values of $ \mu $ when  $\epsilon=2^{-8}$.   
The Figure \eqref{fig33} and \eqref{fig34}  represents the surface plot of numerical solution and error graph of example \eqref{ex-c}  respectively for $\epsilon=2^{-8}$, $\mu=2^{-36}$ and $N=64$. We notice that the maximum error is at the point of discontinuity.
\section{Conclusions}
In this article, we studied a singularly perturbed two-parameter parabolic problem characterized by a discontinuous convection coefficient and source term. The solution reveals boundary layers near the domain boundaries and internal layers at the point of discontinuity, induced by the discontinuities in the convection coefficient and source term. For the temporal discretization, we utilized the Crank-Nicolson method on a uniform mesh, achieving second-order accuracy in time. In the spatial direction, we implemented a hybrid difference scheme, which integrates the midpoint method, central difference method, and upwind difference method on a suitably defined Shishkin mesh with a five-point formula at the discontinuity point, ensuring almost second-order accuracy in space. The theoretical analysis was corroborated by numerical results, demonstrating the efficacy and accuracy of the proposed approach.

\end{sloppypar}
\end{document}